\documentclass[a4paper,11pt]{amsart}
\usepackage{mathrsfs}
\usepackage[all]{xy}
\usepackage{amsmath,amssymb,amscd,bbm,amsthm,mathrsfs,dsfont}
\usepackage{graphicx}
\usepackage[enableskew,vcentermath]{youngtab}
\usepackage{ytableau}
%
%
\newtheorem{thm}{Theorem}[section]
\newtheorem{lem}{Lemma}[section]
\newtheorem{cor}{Corollary}[section]
\newtheorem{prop}{Proposition}[section]
\newtheorem{rem}{Remark}[section]

\theoremstyle{definition}
\numberwithin{equation}{section}
 \setlength{\textwidth}{160mm} \addtolength{\hoffset}{-16mm}
\newcommand{\ed}{\end{document}}
\begin{document}

\title[Monogenic  functions of several vector  variables and the Dirac complex]{The Hartogs-Bochner extension for monogenic functions of several  vector variables and the Dirac complex}

\author{Yun Shi ${}^\dag$ and  Wei Wang${}^\ddag$}

\thanks{$  \dag$ Department of Mathematics,  Zhejiang University of Science and Technology, Hangzhou 310023, China,
  Email:  shiyun@zust.edu.cn;}
\thanks{${}^\ddag$ School of Mathematical Science, Zhejiang University (Zijingang campus),  Zhejiang 310058, China, Email: wwang@zju.edu.cn;}
\thanks{The first author is  partially supported by  Nature Science Foundation of Zhejiang province (No. LY22A010013) and  National Nature Science Foundation in China (Nos. 11801508, 11971425); The second author is partially supported by National Nature Science Foundation in China  (Nos. 11971425, 12371082).}

\subjclass{30G35; 58J10; 35N10; 15A66}
\keywords{monogenic  functions   of several vector variables; several Dirac operators; the Dirac  complex; tangentially monogenic functions; the Hartogs--Bochner extension}

 \begin{abstract}
Holomorphic functions in several complex variables    are generalized to  regular functions in several quaternionic variables, and further to monogenic functions of several vector variables, which are annihilated by several Dirac operators on $k$ copies of the Euclidean space $\mathbb R^n$.  As   the Dolbeault complex in complex analysis, the Dirac complex  resolving several Dirac operators  plays the  fundamental role  to investigate monogenic functions. Although the spaces in  the Dirac complex are complicated irreducible modules of ${\rm GL}(k),$ we give a simple characterization of the first four spaces, which allows us to write down first three operators in the Dirac complex explicitly and to show this part to be an elliptic complex.  Then   the PDE method can be applied  to obtain solutions to  the non-homogeneous several Dirac equations  under the  compatibility condition, which implies the Hartogs' phenomenon of monogenic functions. Moreover, we   find the boundary version of several Dirac operators and introduce the notion of a tangentially monogenic function, corresponding to tangential Cauchy--Riemann operator   and CR functions in several complex variables, and establish the Hartogs-Bochner extension of a tangentially monogenic function on the boundary of a domain.
\end{abstract}
 \maketitle
\section{Introduction}Holomorphic functions in several complex variables  are generalized to $k$-regular functions in several quaternionic variables, and further to monogenic functions of several vector variables, which are annihilated by several Dirac operators  on $k$ copies of the Euclidean space $\mathbb R^n$.  Write  $\mathbf x_A=\left(x_{A1},\dots,x_{An}\right)$ as  the vector variable in the $A$-th copy of $\mathbb R^n,$ and $\partial_{Aj}:=\frac{\partial}{\partial{x_{Aj}}},$ where $A=0,\dots,k-1,j=1, \dots,n.$  The Dirac operator in the $A$-th copy is $\nabla_A:\Gamma\left(\mathbb R^n,\mathbb S^\pm\right)\rightarrow\Gamma\left(\mathbb R^n,\mathbb S^\mp\right)$ with
$
\nabla_A:=\sum_{j=1}^n\gamma_j\partial_{A j},
$
$A=0,\dots,k-1,$ where  $\mathbb S^\pm$ denote the two spinor modules for $n$ even, the same symbols are used for $n$ odd with the convention that $\mathbb S^+$ and $\mathbb S^-$ are isomorphic, and  $\gamma_j:\mathbb S^\pm\rightarrow\mathbb S^\mp$ are Dirac matrices.   The \emph{several Dirac operators} $\mathcal D_0:\Gamma\left(\Omega,\mathbb S^+\right)\rightarrow \Gamma\left(\Omega,\mathbb C^k\otimes\mathbb S^-\right)$ are given by $$\mathcal D_0f=\left(\begin{matrix}\nabla_0f\\\vdots\\\nabla_{k-1}f \end{matrix}\right),\quad {\rm for}\ f\in\Gamma\left(\Omega,\mathbb S^+\right),$$ where $\Omega$ is a domain in $\mathbb R^{kn}.$ $f$ is called a \emph{monogenic function} on $\Omega$ if $
\mathcal D_0f(\mathbf x)=0,
$ for any $\mathbf x\in\Omega.$ The space of all monogenic  functions on $\Omega$ is denoted by $\mathcal O(\Omega).$ The notion of a   spinor-valued monogenic function on $\mathbb R^{kn}$ is the generalizing  of a holomorphic function on $\mathbb C^n.$ Ones have been interested in generalization the theory of several complex variables to this kind of functions since 90's \cite{SV}.   For this purpose, a fundamental tool is to solve  the non-homogeneous equation
\begin{align} \label{dfg}
\mathcal D_0f=g,
\end{align}
for $g\in\Gamma\left(\Omega,\mathbb C^k\otimes \mathbb S^-\right).$ Since it   is overdetermined, (\ref{dfg}) can only be solved under a   compatibility condition. Thus the solution of this problem can be obtained if one can provide a description of the so-called the Dirac complex. In the quaternionic case, such complexes are known explicitly now (cf. e.g. \cite{MR1165872},  \cite{bures1}-\cite{CSSS},   \cite{Wa11}). They are further extended to the Heisenberg groups \cite{Ren,SW2,SW,Wa11}.

For $2$ vector variables with the dimension $n$  even, the Dirac complex was found by Krump-Sou\v cek \cite{Krump}, by using parabolic geometry associated to ${\rm SO}(2,2n+2).$ In \cite{SWW}, the authors wrote down operators explicitly in the Dirac complex of two variables, without the restriction of even dimension, and applied the PDE method of several complex variables to equation (\ref{dfg}) to obtain compactly support solution if $g$ is compactly supported and satisfies the compatibility condition\begin{align}\label{D1g}
\mathcal D_1g=0.
\end{align}  This solution implies the Hartogs' phenomenon for monogenic functions of $2$ vector variables.

For $3$ vector variables  with the dimension $n$   greater than   $5,$ Sabadini-Sommen-Struppa-Van Lancker  \cite{SSSL} wrote down operators in the Dirac complex explicitly, with the help of the tool of  megaforms and the method of computer algebra. In the stable case (i.e. $k\leq \frac{n}{2}$),  the method of parabolic geometry was used  to construct the Dirac complex by 
Krump \cite{kru} for $n=4$ and by Sala\v c \cite{Salac}-\cite{Salac18} for general  even  $n$.  Sala\v c also constructed  the second order differential operators in the complex as the sum of $4$ invariant differential  operators \cite{Salac}. For the unstable case, it is an open problem to construct the Dirac complex. However  some  results   can be found in Krump \cite{kru2}.

In this paper, we   write down explicitly the first three operators in the Dirac complex for any $n$ and $k$. For $k$ ($k\geq3$) vector variables, the first four terms of this  complex can be written as
\begin{equation}\begin{aligned}\label{co3}
0\rightarrow\Gamma\left(\Omega,\mathcal V_0 \right)\xrightarrow{\mathcal{D}_{0}} \Gamma\left(\Omega,\mathcal V_1 \right)\xrightarrow{\mathcal{D}_{1}}\Gamma\left(\Omega,\mathcal V_2\right) \xrightarrow[\mathcal{D}''_{2}]{\mathcal{D}'_{2}}\begin{matrix} \Gamma\left(\Omega,\mathcal V_3'\right)\\\oplus\\\Gamma\left(\Omega,\mathcal V_3''\right)\end{matrix},
\end{aligned}\end{equation}(cf. \cite{Damiano,Salac,Salac18-2,Salac18}   in the stable case),
where $\Omega$ is a domain in $\mathbb R^{kn},$ and
\begin{align*}\mathcal V_0=& V_{0\dots}\otimes\mathbb S^+,\\ \mathcal V_1=&V_{10\dots}\otimes\mathbb S^-,\\\mathcal V_2=& V_{210\dots}\otimes\mathbb S^-,\\\mathcal V_3'=& V_{220\dots}\otimes\mathbb S^+,\\ \mathcal V_3''=&  V_{3110\dots}\otimes\mathbb S^-.\end{align*}
Here  $V_\lambda$ is an irreducible ${\rm GL}(k)$-module labeled by $\lambda,$ where $\lambda=\left(\lambda_1,\dots,\lambda_k\right)$ is a partition of $n,$ i.e.   $\lambda_1\geq\lambda_2\geq\dots\lambda_k\geq0,$  and  $\lambda_1+\dots+\lambda_k=n.$  $V_{0\dots}\cong\mathbb C,V_{10\dots}\cong\mathbb C^k.$ The spaces $V_{210\dots},V_{220\dots},V_{3110\dots}$ can be realized as subspaces of tensor products of $\mathbb C^k$ in terms of Young symmetrizer, which are called Weyl modules. We give a very simple and elementary characterization of these spaces as follows.

 Let $\omega^A,A=0,1,\dots,k-1,$ be  a basis of $\mathbb C^k.$ Denote briefly $\omega^{A_1\dots A_l}:=\omega^{A_1}\otimes\cdots\otimes\omega^{A_l}.$ An element $h$ in $\otimes^l\mathbb C^k$ can be written as $h=h_{A_1\dots A_l}\omega^{A_1\dots A_l}.$ Here and in the sequel  we use Einstein's convention of taking summation over repeated indices. We also identify an element $h$ with the tuple $\left(h_{A_1\dots A_l}\right).$
\begin{prop}\label{p1}
{\rm (1)}
$h=\left(h_{ABC}\right) \in V_{210\dots}$ if and only if   \begin{equation}
\begin{aligned}\label{HABC}
h_{[A\underline{B}C]}+h_{[A\underline{C}B]}=\frac32h_{ABC}.
\end{aligned}\end{equation}Moreover, for any $h \in V_{210\dots},$ we have \begin{align}\label{Hcomm}
h_{ABC}=h_{ACB}.
\end{align}
{\rm (2)} $h=\left(h_{D ABC}\right)\in V_{220\dots}$ if and only if
\begin{align}\label{HABCD}
h_{D ABC}=\frac16\sum_{(A,D),(B,C)}\left(h_{D[A\underline{B}C]}+ h_{B[C\underline{D}A]}\right).
\end{align}
{\rm (3)} $h=\left(h_{EDABC}\right)\in  V_{3110\dots}$ if and only if   \begin{equation}
\begin{aligned}\label{HEDABC}
\sum_{(D,B,C)}h_{[E\underline{D}A\underline{B}C]} =\frac{10}{3}h_{EDABC}.
\end{aligned}\end{equation}Here bracket $[\cdots]$ means  skewsymmetrization of   indices, but underlined indices in a bracket are not skewsymmetrized.        $\sum_{(B,\cdots,C)}$ denotes the summation   taken over all permutations $(B,\cdots,C).$
\end{prop}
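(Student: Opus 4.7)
\medskip

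\textbf{Proof plan.} The strategy is to identify each of $V_{210\dots}$, $V_{220\dots}$, $V_{3110\dots}$ with the image of a suitable Young symmetrizer $c_T$ acting on the appropriate tensor power of $\mathbb C^k$, and then to check that the conditions (\ref{HABC}), (\ref{HABCD}), (\ref{HEDABC}) each express the idempotent equation $\tilde c_T h = h$ for the normalized symmetrizer $\tilde c_T = (d_\lambda/|\lambda|!)\,c_T$, where $d_\lambda$ denotes the dimension of the corresponding $S_{|\lambda|}$-irreducible.

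For part (1), I would first derive (\ref{Hcomm}) from (\ref{HABC}): the transposition $B\leftrightarrow C$ applied to (\ref{HABC}) yields a second identity, and subtracting it from the original gives $3(h_{ABC}-h_{ACB})=0$, establishing (\ref{Hcomm}). Substituting this symmetry back into (\ref{HABC}) reduces the condition to the cyclic vanishing $h_{ABC}+h_{BCA}+h_{CAB}=0$, which together with (\ref{Hcomm}) is a standard description of $V_{(2,1)}$. The converse direction is a routine substitution.

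For part (2), I would fix the standard tableau of shape $(2,2)$ filled row by row with $1,\dots,4$. The sum $\sum_{(A,D),(B,C)}$, ranging over the four row-and-column transpositions, combined with the two partial antisymmetrizations appearing on the right-hand side of (\ref{HABCD}), reproduces the action of $\tilde c_T$ on an arbitrary tensor $h_{DABC}$, so that (\ref{HABCD}) expresses the idempotent equation. A matching of coefficients gives the stated factor $1/6$.

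For part (3), the Young symmetrizer for the hook $(3,1,1)$ decomposes as a row symmetrization over three positions composed with a column antisymmetrization over three positions, producing $3!\cdot 3!=36$ signed terms. The left-hand side of (\ref{HEDABC}) aggregates these: each partial antisymmetrization $h_{[E\underline{D}A\underline{B}C]}$ contributes $3!$ signed terms from the column positions, while the outer sum $\sum_{(D,B,C)}$ over the $3!$ permutations of the triple realizes the row symmetrization. The scalar $10/3$ is the eigenvalue of the normalized idempotent on its image, computable from $c_T^2=(5!/d_{(3,1,1)})\,c_T$ together with the hook-length value $d_{(3,1,1)}=6$.

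The main obstacle will be the verification for $V_{3110\dots}$: the Young symmetrizer in this case interweaves the symmetrization and antisymmetrization over positions that overlap (the corner box of the tableau lies in both the symmetrized row and the antisymmetrized column), so that reducing the 36-term sum to the compact form (\ref{HEDABC}) requires careful bookkeeping and the invocation of Garnir-type relations. Parts (1) and (2) admit comparatively transparent direct verifications along the lines indicated above.
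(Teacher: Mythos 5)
Your proposal is correct and takes essentially the same route as the paper: both characterize $V_{210\dots}$, $V_{220\dots}$, $V_{3110\dots}$ as images of idempotents given by (normalized) Young symmetrizers, and read each of \eqref{HABC}, \eqref{HABCD}, \eqref{HEDABC} as the fixed-point equation $C_\lambda h=h$, so that membership in the image is equivalent to the identity. The only difference is one of bookkeeping: the paper verifies the idempotency $C_{21}^2=C_{21}$, $C_{22}^2=C_{22}$, $C_{311}^2=C_{311}$ by direct index computation in its Proposition~\ref{pro} and defers the term-by-term matching with $\mathfrak c_{21},\mathfrak c_{22},\mathfrak c_{311}$ to Appendix~\ref{app}, whereas you would import idempotency from the general identity $c_T^2=(|\lambda|!/\dim V_\lambda)\,c_T$ together with that same matching; your elementary reformulation of part (1) as ``symmetric in $B,C$ plus vanishing cyclic sum'' is a correct and slightly more classical variant.
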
It is convenient to use these characterization as the  definition of these vector  spaces.

A section in $\Gamma\left(\Omega,\mathcal V_0\right)$ is an   $\mathbb S^+$-valued function on $\Omega$, while a section in $\Gamma\left(\Omega,\mathcal V_1\right)$ is written as   $F=\left(F_A\right)$ with $F_A$ to be    $\mathbb S^-$-valued functions on $\Omega.$ A section in $\Gamma\left(\Omega,\mathcal V_2\right)$ is written as $h=\left(h_{ABC}\right)$ with $h_{ABC}$ to be  $\mathbb S^-$-valued functions  which satisfy (\ref{HABC}) on $\Omega.$ A section of $\Gamma\left(\Omega,\mathcal V_3'\right)$ or $\Gamma\left(\Omega,\mathcal V_3''\right)$ is similarly defined. The   operators $\mathcal D_0,\mathcal D_1,  \mathcal D'_2$ and $\mathcal D''_2$ in (\ref{co3}) are   \begin{equation}\begin{aligned}\label{D0}
\left({\mathcal D}_0f\right)_A=&\nabla_A f, \\\left(\mathcal D_1F\right)_{ABC}=&\sum_{(B,C)}\nabla_{[A}\nabla_{\underline{B}} F_{C]},   \\ \left(\mathcal D'_2h\right)_{D ABC}=&\sum_{(A,D),(B,C)}\left(\nabla_{D}h_{[A\underline{B}C]}+ \nabla_{B}h_{[C\underline{D}A]}\right),
 \\ \left(\mathcal D''_2h\right)_{E D ABC}=&
 \frac12\sum_{(D,B,C)}\left( 2\nabla_{[E}\nabla_{\underline{D}}h_{A]BC} +\nabla_{D}\nabla_{[E}h_{A]BC} +\Delta_{BC}h_{[E\underline{D}A]}\right), \end{aligned}\end{equation}
for $f\in\Gamma\left(\Omega,\mathcal V_0\right),F\in\Gamma\left(\Omega,\mathcal V_1\right), h\in\Gamma\left(\Omega,\mathcal V_2\right),$ where  $A,B,C,D,E=0,1,\dots,k-1.$ We can check both $\mathcal D_2'h$ and  $\mathcal D_2''h$ are $\mathcal V_3''$-valued (cf. Corollary \ref{cor21}). So we have the operator  $$\mathcal D_2=\mathcal D'_2+\mathcal D''_2:\Gamma \left(\Omega,\mathcal V_2\right)\rightarrow\Gamma \left(\Omega,\mathcal V_3\right),\qquad \mathcal V_3=\mathcal V'_3\oplus\mathcal V''_3.$$ For $3$ vector variables, the differential operators in the Dirac complex was given explicitly by \cite{Damiano,SSSL}. We can check that in this case, operators in (\ref{D0}) coincide  with their formulae.

Then we prove the short sequence   (\ref{co3}) is an elliptic complex.   The symbol sequence of {\rm(\ref{co3})} at $\xi \in\mathbb R^{kn}\setminus\{\mathbf 0\},$    is
\begin{equation}\begin{aligned}\label{simco}
0\rightarrow\mathbb S^+\xrightarrow{\sigma_{0}(\xi)} \mathcal V_1\xrightarrow{\sigma_{1}(\xi)}  \mathcal V_2 \xrightarrow[\sigma_{2}''(\xi)]{\sigma_{2}'(\xi)} \begin{matrix} \mathcal V_3'\\\oplus\\\mathcal V_3''\end{matrix},
\end{aligned}\end{equation}  where $\sigma_j(\xi)$ is the symbol of the operator $\mathcal D_j$ in (\ref{D0}).

\begin{thm}\label{exact}For $k\geq3,$ the short sequence   {\rm(\ref{co3})} is an elliptic complex, i.e.  $\mathcal D_{j+1}\circ\mathcal D_j=0$ for $j=0,1,$ and  $\ker\sigma_0{(\xi)}=\{0\},$ and   $\ker\sigma_j{(\xi)}={\rm Im}\ \sigma_{j-1}{(\xi)},  j=1,2,$  for $\xi\in\mathbb R^{kn}\setminus\{\mathbf 0\}.$
\end{thm}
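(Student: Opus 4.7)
The argument has two independent parts: verifying the differential identities $\mathcal D_{j+1}\circ\mathcal D_j=0$ for $j=0,1$, and establishing pointwise exactness of the symbol sequence~(\ref{simco}) at each non-zero $\xi$ with $\sigma_0(\xi)$ injective. The central algebraic tool throughout is the Clifford anticommutation
\[
\nabla_A\nabla_B+\nabla_B\nabla_A \;=\; \pm\,2\,\Delta_{AB}\cdot\mathrm{id},
\]
and its symbol version $\xi_A\xi_B+\xi_B\xi_A=\pm 2\langle\xi_A,\xi_B\rangle\cdot\mathrm{id}$: every pair of Dirac operators produces a \emph{scalar}, and in particular $\xi_A^2$ is a scalar multiple of the identity, so whenever $\xi_A\ne 0$ it has a two-sided Clifford inverse $\xi_A^{-1}=\pm|\xi_A|^{-2}\xi_A$. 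The characterizations in Proposition~\ref{p1} will serve as working definitions of $\mathcal V_2,\mathcal V_3',\mathcal V_3''$.

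For $\mathcal D_1\circ\mathcal D_0=0$ I substitute $F_A=\nabla_A f$ into the formula for $\mathcal D_1$ and regroup the four resulting third-order terms as $\tfrac12[\nabla_A,\{\nabla_B,\nabla_C\}]f$; since $\{\nabla_B,\nabla_C\}=\pm 2\Delta_{BC}$ is a scalar differential operator commuting with $\nabla_A$, this vanishes. The identity $\mathcal D_2\circ\mathcal D_1=0$ is conceptually the same but combinatorially heavier, because $\mathcal D_2=\mathcal D_2'+\mathcal D_2''$ mixes a first-order and a second-order operator. The plan is to substitute $\mathcal D_1F$ into each of the summation patterns $(A,D),(B,C)$ and $(D,B,C)$ appearing in~(\ref{D0}), repeatedly convert triple Dirac products into scalar $\Delta$-terms via the Clifford relation, and verify that the explicit $\Delta_{BC}h_{[E\underline DA]}$ counter-term built into $\mathcal D_2''$ cancels everything that survives, precisely because the symmetrization conventions of Proposition~\ref{p1} match those of the operators.

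For exactness of the symbol sequence I exploit $\mathrm{GL}(k)$-equivariance: after a linear change of basis in $\mathbb C^k$ I may assume $\xi_0\ne 0$, so that $\xi_0$ is Clifford-invertible. Injectivity of $\sigma_0(\xi)$ is then immediate since $\xi_0 f=0$ forces $f=0$. For exactness at $\mathcal V_1$, assume $\sigma_1(\xi)F=0$; the $(A,B,C)=(0,0,M)$ component reads $\xi_0^2 F_M=\xi_M\xi_0 F_0$, so setting $f:=\xi_0^{-1}F_0$ gives $F_M=\xi_M\xi_0^{-1}F_0=\xi_M f$ for every $M$, and hence $F=\sigma_0(\xi)f$; the remaining components of $\sigma_1(\xi)F=0$ are then automatic from $\sigma_1\circ\sigma_0=0$.

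The main obstacle will be exactness at $\mathcal V_2$. Given $h\in\mathcal V_2$ with $\sigma_2'(\xi)h=0$ and $\sigma_2''(\xi)h=0$, the plan is to define a candidate $F_A$ as $\xi_0^{-1}$ applied to an appropriate linear combination of the components $h_{0\ast\ast}$, and then verify $\sigma_1(\xi)F=h$ on each $(A,B,C)$. Because $\sigma_2'(\xi)$ is first order in $\xi$ while $\sigma_2''(\xi)$ is second order (due to the scalar $\Delta_{BC}$ term), the two vanishing conditions genuinely supply different pieces of information: $\sigma_2'(\xi)h=0$ should pin down $F$ up to $\sigma_0(\xi)$-ambiguity, while $\sigma_2''(\xi)h=0$ provides the last scalar identity among pairs of Clifford products needed to close the equation. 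The bulk of the work, and the principal difficulty, lies in this combinatorial reconstruction and in keeping the symmetries~(\ref{HABC}) consistent throughout the verification.
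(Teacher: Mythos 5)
Your overall strategy coincides with the paper's: reduce everything to the scalar anticommutators $\Delta_{BC}$ and the commutation $[\nabla_A,\Delta_{BC}]=0$, then prove symbol exactness by normalizing $\boldsymbol\xi_0\neq 0$ and exhibiting explicit preimages built from $\boldsymbol\xi_0^{-1}=\boldsymbol\xi_0/|\boldsymbol\xi_0|^2$. The parts you actually carry out are correct and agree with the paper: injectivity of $\sigma_0(\xi)$, exactness at $\mathcal V_1$ via the $(0,0,M)$ component $|\boldsymbol\xi_0|^2F_M=\boldsymbol\xi_M\boldsymbol\xi_0F_0$ and the preimage $f=\boldsymbol\xi_0^{-1}F_0$, and $\mathcal D_1\circ\mathcal D_0=\tfrac12[\nabla_A,\Delta_{BC}]=0$.

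However, the two places where you write ``the plan is to\dots'' are precisely where the content of the theorem lives, and they remain genuine gaps. First, $\mathcal D_2''\circ\mathcal D_1=0$ is not merely ``combinatorially heavier'': the cancellation depends on the exact coefficient $\tfrac12$ and the counter-term $\Delta_{BC}h_{[E\underline{D}A]}$ in \eqref{D0}. Substituting $(\mathcal D_1F)_{ABC}=\nabla_A\nabla_{(B}F_{C)}-\tfrac12\Delta_{BC}F_A$, the terms involving $F_A$ and $F_E$ must cancel pairwise via \eqref{Delta}, and the survivor is $\sum_{(D,B,C)}\bigl(-[\nabla_E,\nabla_A]\Delta_{DC}+\Delta_{DC}[\nabla_E,\nabla_A]\bigr)F_B=0$; none of this is checkable from the outline. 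Second, and more seriously, for exactness at $\mathcal V_2$ you specify neither the candidate preimage nor the identity that makes it work. The paper takes $\eta_A=2h_{00A}/|\boldsymbol\xi_0|^2$, and the entire argument reduces to the single identity of Lemma \ref{l51},
\begin{equation*}
|\boldsymbol\xi_0|^2\,h_{ABC}=\boldsymbol\xi_A\boldsymbol\xi_B h_{00C}+\boldsymbol\xi_A\boldsymbol\xi_C h_{00B}-\boldsymbol\xi_{BC}h_{00A},
\end{equation*}
whose derivation combines the $A{=}D{=}0$ component of $\sigma_2'(\xi)h=0$, the $D{=}E{=}0$ component of $\sigma_2''(\xi)h=0$, and the $\mathcal V_2$-symmetries \eqref{HABC} (for instance $h_{A00}=-2h_{00A}$). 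Your heuristic division of labor between $\sigma_2'$ and $\sigma_2''$ points in the right direction but is not a substitute for this computation; without it the proof of exactness at $\mathcal V_2$ --- the hardest and essential step --- is missing.
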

One difficulty of checking (\ref{co3}) to be a complex is that the representation spaces   $V_{210\dots},V_{220\dots}$ and $V_{3110\dots}$ are not easily manipulated as exterior or symmetric powers. It becomes more tedious but proofs are still elementary, based on the characterization in Proposition \ref{HABC}. Another one    comes  from the noncommutativity  of several  Dirac operators  $\nabla_A$'s. But their anticommutators $$
\left\{\nabla_B,\nabla_C\right\}=\nabla_B\nabla_C+\nabla_C\nabla_B =:\Delta_{BC}$$ are scalar operators, which commutate with any $\nabla_A,$ i.e.
\begin{align}\label{Delta}
\Delta_{BC}\nabla_A=\nabla_A\Delta_{BC}.
\end{align}
   This nice property is frequently used in this paper.

Then as in the quaternionic case \cite{wang2008,Wa10}, \cite{wang23}-\cite{wang29}, we can apply the  PDE method of several complex variable to study this complex. We show the non-homogeneous equation (\ref{dfg}) has  compactly supported solution  if $g$ is compactly supported and satisfies the compatibility condition \eqref{D1g}. This result implies the Hartogs' phenomenon for monogenic functions of $k$ vector variables of any dimensions.

To investigate monogenic functions on a domain, we need  the boundary  complex of the Dirac complex (\ref{co3}). By using the construction  of the boundary complex of a general differential complex  (see eg. \cite{Andreotti,AN,Naci,Nacinovich85}),  we get the tangential several Dirac operators:
  \begin{align*}
 {\mathscr D}_0:\Gamma\left(\partial \Omega,\mathcal V_0\right)&\rightarrow \Gamma\left(\partial \Omega,\mathscr V_1\right),
  \end{align*}
where $\mathscr V_1$ is two copies of $\mathbb S^-\otimes\mathbb C^{k-1},$ i.e.  \begin{align*}
  \mathscr V_1\cong\left(\mathbb S^-\otimes\mathbb C^{k-1}\right)\oplus\left(\mathbb S^-\otimes\mathbb C^{k-1}\right).
  \end{align*}
It is  the counterpart of tangential Cauchy-Riemann operator on the boundary of a domain in $\mathbb C^n.$   $\hat f\in \Gamma\left(\partial \Omega,\mathcal V_0\right)$ is called \emph{tangentially monogenic} if \begin{align*} {\mathscr D}_0\hat f=0.
\end{align*}
  This is the counterpart of the notion of a CR function  in several complex variables. Then we can establish  the Hartogs-Bochner extension  for tangentially monogenic functions.
\begin{thm}\label{HBE} Suppose that $k\geq2$ and  $\Omega$ is a bounded domain in $\mathbb R^{kn}$ with smooth boundary such that $\mathbb R^{kn}\setminus \overline \Omega$ is connected. If $f$ is a smooth tangentially monogenic function on $\partial \Omega$, then there exists $\tilde f\in\mathcal O(\Omega)$ smooth up to the boundary such that $\tilde f=f$ on $\partial \Omega.$
\end{thm}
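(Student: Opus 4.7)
The plan is to adapt Bochner's original proof of the classical Hartogs--Bochner theorem in several complex variables, replacing the $\bar\partial$-complex by the Dirac complex---(\ref{co3}) when $k\geq 3$ and its analogue from \cite{SWW} when $k=2$---and leaning on three ingredients: a Whitney--Borel-style extension of $f$ into $\overline\Omega$ whose $\mathcal D_0$-image vanishes to infinite order on $\partial\Omega$; the compactly supported solvability of $\mathcal D_0 u = g$ under the compatibility condition $\mathcal D_1 g = 0$, obtained earlier in the paper from the elliptic complex; and real-analyticity (hence unique continuation) for monogenic functions, which follows from ellipticity of the constant-coefficient overdetermined operator $\mathcal D_0$.

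First I would construct $\tilde F\in C^\infty(\overline\Omega,\mathbb S^+)$ extending $f$ and satisfying $\mathcal D_0\tilde F=O(\rho^\infty)$ at $\partial\Omega$. Fix a smooth defining function $\rho$ for $\partial\Omega$ with $\rho<0$ in $\Omega$ and $|d\rho|=1$ on $\partial\Omega$, and seek a formal Taylor expansion $\tilde F\sim\sum_{m\geq 0}\rho^m F_m$ with $F_0=f$ and $F_m\in C^\infty(\partial\Omega,\mathbb S^+)$. In normal-tangential coordinates, the coefficient of $\rho^m$ in $\mathcal D_0\tilde F$ becomes a linear combination of the normal symbol acting on $F_{m+1}$ and tangential derivatives of $F_m$; by the general recipe for boundary complexes of \cite{AN,Nacinovich85}, the condition for solvability of this linear system for $F_{m+1}$ is exactly that the corresponding tangential operator annihilates $F_m$, and at leading order this is $\mathscr D_0 f = 0$. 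Borel's theorem then produces an actual $\tilde F$ realizing this formal jet, and multiplying by a compactly supported cutoff equal to $1$ near $\overline\Omega$ makes $\tilde F\in C^\infty_c(\mathbb R^{kn},\mathbb S^+)$.

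Next, define
\begin{equation*}
g := \begin{cases}\mathcal D_0\tilde F & \text{on }\overline\Omega,\\ 0 & \text{on }\mathbb R^{kn}\setminus\overline\Omega.\end{cases}
\end{equation*}
The infinite-order vanishing from the previous step makes $g\in C^\infty_c(\mathbb R^{kn},\mathcal V_1)$, and $\mathcal D_1 g=0$: inside $\Omega$ this is $\mathcal D_1\mathcal D_0\tilde F=0$ by Theorem \ref{exact}, and outside it is trivial. Applying the solvability of the non-homogeneous Dirac equation recalled in the excerpt yields $u\in C^\infty_c(\mathbb R^{kn},\mathbb S^+)$ with $\mathcal D_0 u=g$.

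Finally, since $g\equiv 0$ on $\mathbb R^{kn}\setminus\overline\Omega$, the function $u$ is monogenic there and vanishes outside a large ball; real-analyticity together with the connectedness of $\mathbb R^{kn}\setminus\overline\Omega$ propagates this vanishing up to $\partial\Omega$ by continuity. Setting $\tilde f:=(\tilde F-u)|_{\overline\Omega}$ gives $\mathcal D_0\tilde f=g-g=0$ on $\Omega$, so $\tilde f\in\mathcal O(\Omega)$ and $\tilde f|_{\partial\Omega}=f$, with smoothness up to the boundary inherited from $\tilde F$ and $u$. The main obstacle is the first step: a rigorous execution of the formal extension requires an explicit normal-tangential splitting of $\mathcal D_0$ and a verification that the obstruction to solving for $F_{m+1}$ matches the $\mathscr D_0$ given earlier in the paper, and is expected to lean on the nice anticommutator structure (\ref{Delta}) of the $\nabla_A$'s.
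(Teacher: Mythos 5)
Your proposal is correct and follows essentially the same route as the paper: extend $f$ to $\tilde F$ with $\mathcal D_0\tilde F$ flat on $\partial\Omega$ (the paper's Proposition \ref{phat}), extend $\mathcal D_0\tilde F$ by zero to a compactly supported $\mathcal D_1$-closed datum, solve $\mathcal D_0 u=g$ with $u$ vanishing on the connected complement of $\overline\Omega$ (the paper's Theorem \ref{t31}), and set $\tilde f=\tilde F-u$. The formal normal-jet construction you sketch for the first step is exactly how the paper proves Proposition \ref{phat}, using the characterization of the zero-Cauchy-data sheaves $\mathcal J_0,\mathcal J_1$.
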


In the quaternionic case, the boundary complex of $k$-Cauchy-Fueter complex was obtained by the second author in \cite{wang29}, and used to establish the Hartogs-Bochner extension for $k$-CF functions on the boundary.

The paper is organized as follows. In Section 2,  the characterization of vector spaces $V_{210\dots},$ $V_{220\dots},$ $V_{3110\dots}$ in the Dirac complex is given. In Section 3, we show the short sequence (\ref{co3}) with operators given by (\ref{D0}) is a complex.  In Section 4,  we study  vector spaces of the boundary complex of the  Dirac complex and obtain  the boundary version of several Dirac operators. In Section 5,   we prove the ellipticity of the short sequence (\ref{co3}) of the  Dirac complex and    the Hartogs-Bochner extension  for tangentially  monogenic functions of $k$  vector variables by using  uniform elliptic operators,   the associated Hodge-Laplacian operators, to solve the non-homogeneous equation (\ref{dfg}) under the compatibility condition (\ref{D1g}). In this approach, operators $\mathcal D_2'$ and $\mathcal D_2''$ are used to solve (\ref{dfg}).   In Appendix \ref{app}, we give some basic facts  about Young diagrams, Young symmetrizer and the construction of Weyl modules as  irreducible representations of ${\rm GL}(k).$

\section{Characterization of vector spaces  in the short sequence of  the Dirac complex}
\subsection{Spinor modules and   ${\rm GL}(k)$-modules}The {\it real Clifford algebra} $\mathbb R_n$ is the associative algebra generated by  the $n$ basis elements of $\mathbb R^n$ satisfying
\begin{align*}
e_ie_j+e_je_i=-2\delta_{ij},
\end{align*}
for $i,j=1,\dots,n.$ The basis of $\mathbb R_n$ is $e_0=1,\  e_\alpha=e_{a_1}\dots e_{a_i},$ for $1\leq i\leq n,1\leq a_1<\dots<a_i\leq n.$

When $n$ is even, there are two spinor modules $\mathbb S^\pm.$ Let $\gamma_j^\pm:\mathbb S^\pm\rightarrow\mathbb S^\mp$
be the representation of $e_j$ on the spinor modules. In odd dimensional $2m+1,$ there exists a unique spinor module $\mathbb S=\mathbb S^+\oplus\mathbb S^-,$ where $\mathbb S^\pm$ are spinor modules of  $\mathfrak{so}(2m,\mathbb C),$   and  $\gamma_j=\left(\begin{matrix}0&\gamma_j^+\\\gamma_j^-&0 \end{matrix}\right)$ (cf. \cite{Delanghe}).  $\gamma_j^\pm$ will be denoted as $\gamma_j$ for simplicity. They satisfy
\begin{align}\label{comm}
\gamma_j\gamma_k+\gamma_k\gamma_j=-2\delta_{jk} {\rm Id}_{\mathbb S^\pm}.
\end{align}
In the sequel  of  this paper,  $\mathbb S^\pm$  denote the two spinor modules for $n$ even, and the same symbols are used for $n$ odd with the convention that $\mathbb S^+$ and $\mathbb S^-$ are isomorphic.

\begin{lem}\label{gamma} { \rm(cf. e.g.  \cite[p.125]{Delanghe} or  \cite[Lemma 2.1]{SWW})} The adjoint $\gamma_k^*$   of $\gamma_k$ with respect to the standard   inner product on $\mathbb S^\pm$ is given by
\begin{align}\label{star}\gamma_k^*=-  \gamma_k.\end{align}
\end{lem}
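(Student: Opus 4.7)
The plan is to pin down a model for $\mathbb{S}^\pm$ carrying a canonical inner product with respect to which each generator $\gamma_k$ is manifestly skew-adjoint, and then extract the identity $\gamma_k^*=-\gamma_k$ from that model. There are two natural ways to do this, one explicit and one conceptual; I would present the explicit one and mention the conceptual route as a sanity check.

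The explicit route proceeds via the Fock model for the spin representation. Assume first that $n=2m$. Complexify $\mathbb{R}^n$, pick a maximal isotropic subspace $W\subset\mathbb{C}^n$ with basis $w_j=\tfrac{1}{\sqrt{2}}(e_{2j-1}-ie_{2j})$, $j=1,\dots,m$, and realize the spinor module as $\mathbb{S}=\Lambda^{*}W$. Introduce creation and annihilation operators
\[
a_j^{*}(\alpha)=w_j\wedge\alpha,\qquad a_j=(a_j^{*})^{*},
\]
where the adjoint is taken with respect to the natural Hermitian inner product on $\Lambda^{*}W$ making $\{w_{j_1}\wedge\cdots\wedge w_{j_r}\}_{j_1<\cdots<j_r}$ an orthonormal basis. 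The canonical anticommutation relations
\[
\{a_i,a_j^{*}\}=\delta_{ij}\,\mathrm{Id},\qquad \{a_i,a_j\}=\{a_i^{*},a_j^{*}\}=0
\]
are then a direct verification. Setting
\[
\gamma_{2j-1}=a_j^{*}-a_j,\qquad \gamma_{2j}=i(a_j^{*}+a_j),
\]
a short computation using the above anticommutators shows $\{\gamma_k,\gamma_l\}=-2\delta_{kl}\,\mathrm{Id}$, so that this is indeed a realization of the Clifford relations \eqref{comm}. The splitting $\mathbb{S}^{+}=\Lambda^{\mathrm{even}}W$, $\mathbb{S}^{-}=\Lambda^{\mathrm{odd}}W$ is respected by every $\gamma_k$, which maps $\mathbb{S}^{\pm}\to\mathbb{S}^{\mp}$.

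The claim $\gamma_k^{*}=-\gamma_k$ now drops out by inspection: using $(a_j)^{*}=a_j^{*}$ and that complex conjugation turns $i$ into $-i$,
\[
\gamma_{2j-1}^{*}=(a_j^{*}-a_j)^{*}=a_j-a_j^{*}=-\gamma_{2j-1},\qquad
\gamma_{2j}^{*}=-i(a_j+a_j^{*})=-\gamma_{2j}.
\]
For odd $n=2m+1$, the conclusion is inherited from the even case by adjoining the extra generator $e_{2m+1}$ whose representation one takes to be $\pm i$ times the volume element of $\mathbb{R}^{2m}$, and verifying directly that this is skew-adjoint as well; with the convention $\mathbb{S}^{+}\cong\mathbb{S}^{-}$ in the statement of the lemma this step is essentially bookkeeping.

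As a conceptual cross-check, the Pin group $\mathrm{Pin}(n)$ is a compact subgroup of the Clifford algebra (in our positive definite signature), so any finite-dimensional representation of it admits an invariant Hermitian inner product. With respect to such an inner product, every unit vector $v\in\mathbb{R}^n$ acts on $\mathbb{S}$ by a unitary transformation, hence $\gamma_v^{*}\gamma_v=\mathrm{Id}$; combined with $\gamma_v^{2}=-|v|^{2}\mathrm{Id}=-\mathrm{Id}$ this gives $\gamma_v^{*}=-\gamma_v$, and specializing to $v=e_k$ yields the lemma. The only real work, and the place to be careful, is to confirm that the inner product used to define $\gamma_k^{*}$ in the paper matches the canonical one furnished by either construction; once that matching is fixed, the identity is essentially automatic.
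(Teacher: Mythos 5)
The paper offers no proof of this lemma at all: it is stated as a known fact with a citation to Delanghe--Sommen--Sou\v{c}ek (where $\mathbb S^\pm$ is realized as a minimal left ideal of the complexified Clifford algebra and the inner product is the one inherited from the orthonormal basis $\{e_\alpha\}$), so your argument is a genuinely self-contained replacement rather than a variant of the paper's route. It is correct. The Fock-model computation checks out: with $\{a_i,a_j^*\}=\delta_{ij}$ one gets $\gamma_{2j-1}^2=\gamma_{2j}^2=-\mathrm{Id}$ and vanishing mixed anticommutators, both operators flip the parity of $\Lambda^\bullet W$, and $\gamma_k^*=-\gamma_k$ is immediate from $(a_j)^*=a_j^*$. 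The Pin-group cross-check is the cleanest conceptual version: $\gamma_v$ unitary together with $\gamma_v^2=-\mathrm{Id}$ forces $\gamma_v^*=\gamma_v^{-1}=-\gamma_v$. The one substantive point is exactly the one you flag at the end: the paper never defines ``the standard inner product on $\mathbb S^\pm$,'' so the proof is complete only once one observes that any two $\mathrm{Pin}(n)$-invariant Hermitian forms on the irreducible module differ by a positive scalar (which does not affect adjoints), so the Fock-model product, the trace-form product of the cited reference, and whatever ``standard'' means all yield the same adjoint; equivalently, skew-adjointness of the $\gamma_k$ can be taken as the normalization of ``standard.'' A last trifle in the odd case: with $\omega=\gamma_1\cdots\gamma_{2m}$ one has $\omega^2=(-1)^m$ and $\omega^*=(-1)^m\omega$, so the extra generator is $\pm i\omega$ only when $m$ is even and $\pm\omega$ when $m$ is odd; either way it is skew-adjoint, so your ``bookkeeping'' claim stands, and the paper's odd-dimensional convention ($\mathbb S^+\cong\mathbb S^-$, each $\gamma_j$ viewed as a map between two copies of the same module) requires nothing more.
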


\emph{Young diagram} associated to a partition $\lambda=\left(\lambda_1,\dots,\lambda_k\right)$ is a
 \begin{equation*}\begin{aligned}
&\young(~~~)\dots \dots\young(~)\\[-1.9mm]&\young(~~~)\dots
\\[-1.9mm]&\vdots\quad\vdots\\[-1.9mm]&\young(~)\dots
\end{aligned}\end{equation*}
with $\lambda_i$ boxes in the $i$-th row. For $k=2,$ the Dirac complex in terms of the Young diagrams  is \begin{equation}\begin{aligned}\label{k=2}
0\rightarrow\bullet\longrightarrow\young(~) \Rightarrow\young(~~,~) \longrightarrow\bullet\rightarrow0,
\end{aligned}\end{equation} which was given in \cite{Damiano,Krump}, where $\bullet=\mathbb C$. We constructed  the differential operators explicitly in   \cite{SWW} without the dimension restriction.

For $k\geq3$, the Dirac complex in terms of the Young diagrams  is
\begin{equation}\begin{aligned}\label{k=3}
0\rightarrow\bullet\longrightarrow\young(~) \Rightarrow&\young(~~,~)\quad\longrightarrow\young(~~,~~) \\&\Downarrow \qquad\qquad \quad \Downarrow\\&\young(~~~,~,~)\longrightarrow\young(~~~,~~,~) \quad \Rightarrow\young(~~~,~~~,~~)\rightarrow\dots \\&\Downarrow\qquad\qquad\quad\Downarrow\qquad\qquad\quad\Downarrow
\\&{\small\young(~~~~,~,~,~)}\rightarrow{\small\young(~~~~,~~,~,~)} \Rightarrow{\small\young(~~~~,~~~,~~,~)}\rightarrow\dots
\\&  \qquad\qquad\qquad\  \qquad\qquad\quad\ \vdots
\end{aligned}\end{equation}
(cf. \cite{Damiano,Salac,Salac18-2,Salac18}   in the stable case), where $\rightarrow$ and $\Rightarrow$ are  differential operators of first order and  second order, respectively,  and $$\mathcal V_3=\mathcal V_3'\oplus\mathcal V_3''=\young(~~,~~)\otimes\mathbb S^+\oplus\young(~~~,~,~)\otimes\mathbb S^-.$$ Note that for $k=2,$ $\young(~~,~~)=\bullet$ as a ${\rm GL}(2)$-module in (\ref{k=2}) and $\young(~~~,~,~)$ is trivial.

Since $\mathcal D_j:\Gamma\left(\Omega,\mathcal V_j\right)\rightarrow \Gamma\left(\Omega,\mathcal V_{j+1}\right)$ is a $\mathfrak{gl}(k)\times\mathfrak{so}(n)$-invariant  (or ${\rm GL}(k)\times{\rm Spin}(n)$-invariant) operator,  its symbol is the projection of  $\mathfrak{gl}(k)\times\mathfrak{so}(n)$-modules \begin{align}\label{dec}
\pi_j:\mathcal V_j\otimes\left(\mathbb R^k\otimes \mathbb R^n\right)\otimes\left(\mathbb R^k\otimes \mathbb R^n\right)\rightarrow \mathcal V_{j+1}.
\end{align}
But for  $\mathcal V_1=\young(~)\otimes\mathbb S^-,$ where $\young(~)=\mathbb R^k,$ we have the decomposition
 \begin{equation*}\begin{aligned}
\young(~)\otimes\young(~)\otimes\young(~)=\left(\young(~,~)\oplus \young(~~)\right)\otimes\young(~)= \left(\young(~~,~)\oplus\young(~,~,~)\right)\oplus\left(\young(~~~) \oplus\young(~~,~)\right),
\end{aligned}\end{equation*}
by using Pieri's rule \cite[p. 225]{FH}, where $\young(~~,~)
$ appears twice. So it is difficult to determine the projection $\young(~~,~)\oplus\young(~~,~)\rightarrow\young(~~,~)$ in (\ref{dec}), which will give us the invariant  operator $\mathcal D_2'.$ While for $\mathcal V_2= \young(~~,~)\otimes\mathbb S^-,$ we have the decomposition
 \begin{equation}\begin{aligned}\label{y21}
 \young(~~,~)\otimes\young(~)\otimes\young(~)=&\left( \young(~~~,~)\oplus\young(~~,~~)\oplus\young(~~,~,~)\right) \otimes\young(~)\\=&\left(\young(~~~~,~)\oplus \young(~~~,~~)\oplus\young(~~~,~,~) \right)\oplus\left(\young(~~~,~~)\oplus \young(~~,~~,~) \right)\\&\oplus\left(\young(~~~,~,~)\oplus \young(~~,~~,~)\oplus\young(~~,~,~,~) \right),
\end{aligned}\end{equation} by using Pieri's rule, where $\young(~~~,~,~)$ appears twice again and we have the same difficulty to determine $\mathcal D_2''$.  When $n$ is even and in the stable case,  the Dirac complex is known \cite{Salac}-\cite{Salac18}. In this case, Sala\v c also constructed the projection $\pi_j$ in (\ref{dec}) and operators $\mathcal D_j$ by using the representation theory in a complicated way. But it is difficult to deduce  expressions (\ref{D0}) of $\mathcal D_j$'s   from his formulae even in the even dimensional case. Our operators $\mathcal D_1,\mathcal D_2',\mathcal D_2''$ in (\ref{D0})  have no  restriction of even dimensions.

 \subsection{Projection operators $C_\lambda$ and vector spaces $V_{210\dots},V_{220\dots}$ and $V_{3110\dots}$}
The    {\it symmetrisation}  and the  {\it skewsymmetrisation} of indices are given by
\begin{equation}\begin{aligned}\label{sym}
f_{\dots(A_1\dots A_p)\dots}:=&\frac{1}{p!}\sum_{\sigma\in S_p}f_{\dots A_{\sigma(1)}\dots A_{\sigma(p)}\dots},\\
f_{\dots[A_1\cdots A_p]\dots}:=&\frac{1}{p!}\sum_{\sigma\in S_p}{\rm sign}(\sigma)f_{\dots A_{\sigma(1)}\cdots A_{\sigma(p)}\dots},
\end{aligned}\end{equation}respectively, where the summations are taken over   the group $S_p$ of all permutations  of $p$ letters.

${\rm GL}(k)$-module $V_{210\dots},V_{220\dots},V_{3110\dots}$ can  be realized as a subspace of $\otimes^\bullet\mathbb C^k$ by Weyl's construction (See Appendix \ref{app}).
Let  $C_{21}:\otimes^3\mathbb C^k\rightarrow\otimes^3\mathbb C^k$ be a linear mapping given  by  \begin{equation}\begin{aligned}\label{H'}
C_{21}\left(h  \right)_{ABC} =&\frac23\sum_{(B,C)} h_{[A\underline{B}C]},
\end{aligned}\end{equation} for    $h\in\otimes^3\mathbb C^k.$ The space $V_{210\dots}$ is defined to be the image of $C_{21}$ in $\otimes^3\mathbb C^k.$
Let  $C_{22}:\otimes^4\mathbb C^k\rightarrow \otimes^4\mathbb C^k
$ be a linear mapping given by  { \begin{equation}\begin{aligned}\label{HH'}
C_{22}\left(h \right)_{DABC}=&\frac16\sum_{(A,D),(B,C)} \left(h_{D[A\underline{B}C]}+ h_{B[C\underline{D}A]}\right),
\end{aligned}\end{equation}}for   $h\in\otimes^4\mathbb C^k.$ The space $V_{220\dots}$ is defined to be the image of $C_{22}$ in $\otimes^4\mathbb C^k.$
Let  $C_{311}:\otimes^5\mathbb C^k\rightarrow\otimes^5\mathbb C^k $ be a linear mapping given by     {\begin{equation}\begin{aligned}\label{c311'}
 C_{311}\left(h\right)_{EDABC}
=&\frac{3}{10}\sum_{(D,B,C)} h_{[E\underline{D}A\underline{B}C]},
\end{aligned}\end{equation}}for    $h\in\otimes^5\mathbb C^k.$ The space $V_{3110\dots}$ is defined to be the image of $C_{311}$ in $\otimes^5\mathbb C^k.$
These images  are usually called \emph{Weyl modules} of $V_{210\dots},V_{220\dots}$ and $V_{3110\dots}$, respectively. More information about these representations of ${\rm GL}(k)$ is given in Appendix \ref{app}.
\begin{rem}{\rm (1)} For $h\in V_{210\dots},$ $h_{ABC}$ is symmetric in $B$ and $C$, but not skewsymmetric in $A$ and $C.$\\
{\rm (2)} For    $h\in V_{3110\dots},$ $h_{EDABC}$ is symmetric in $D,B,C$ and skewsymmetric in $E$ and $A,$ but not   in $E,A,C$.
\end{rem}
 \begin{prop}\label{pro} $C_{21},C_{22}$ and $C_{311}$ are all projection operators, i.e. $C_{\lambda}^2=C_{\lambda}$ for $\lambda=21,22,311.$
 \end{prop}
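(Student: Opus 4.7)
The plan is to verify each identity $C_\lambda^2 = C_\lambda$ directly, either by bare-hands computation or by identifying $C_\lambda$ with a normalized Young symmetrizer (as developed in Appendix~\ref{app}). The cleanest framework is the general fact that a linear endomorphism $T$ satisfies $T^2 = T$ if and only if $T$ acts as the identity on $\operatorname{im}(T)$; so for each $\lambda \in \{21, 22, 311\}$ it is enough to produce enough symmetries of $C_\lambda(h)$ to collapse a second application of $C_\lambda$ back to $C_\lambda(h)$.

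For $\lambda = 21$: expanding $h_{[A\underline{B}C]} = \frac{1}{2}(h_{ABC} - h_{CBA})$ gives
\[
k_{ABC} := C_{21}(h)_{ABC} = \tfrac{1}{3}\bigl(h_{ABC} + h_{ACB} - h_{CBA} - h_{BCA}\bigr).
\]
From this closed form two symmetries of $k$ are immediate: the $BC$-symmetry $k_{ABC} = k_{ACB}$ (which is also Proposition~\ref{p1}'s subsidiary identity \eqref{Hcomm}), and the cyclic vanishing $k_{ABC} + k_{BCA} + k_{CAB} = 0$. Substituting $k$ back into the defining formula of $C_{21}$ and using these two relations to rewrite every term, the arithmetic yields $C_{21}(k)_{ABC} = k_{ABC}$, the coefficient $2/3$ in \eqref{H'} being exactly what is needed for the final numerical factor to come out to $1$.

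For $\lambda = 22$ and $\lambda = 311$ the strategy is identical in spirit but the index combinatorics grows. One again writes out $C_\lambda(h)$ in full, then records the tautological symmetries that the outer permutation sums $\sum_{(A,D),(B,C)}$ and $\sum_{(D,B,C)}$ build in, together with the algebraic identities coming out of the skewsymmetrizations $[A\underline{B}C]$ and $[E\underline{D}A\underline{B}C]$. Feeding $C_\lambda(h)$ back into $C_\lambda$ and using these relations to fold repeated index patterns collapses the second application onto the first; the normalizations $1/6$ in \eqref{HH'} and $3/10$ in \eqref{c311'} turn out to be exactly the factors that bring the overall coefficient to $1$.

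The main obstacle is the combinatorial bookkeeping in the $C_{311}$ case, where $h_{[E\underline{D}A\underline{B}C]}$ is already a six-term alternating sum (skewsymmetrization over the three slots $E, A, C$, with $D, B$ kept fixed) and the outer $\sum_{(D,B,C)}$ multiplies this by another factor of six. To keep the argument tractable I would first record the symmetries of $C_{311}(h)$ in the three ``symmetric'' indices $D,B,C$ and the two ``antisymmetric'' indices $E,A$, and then organize the $36$ resulting terms into orbits under those symmetries, reducing the verification to a handful of index classes. A more conceptual alternative is to invoke the Appendix~\ref{app} identification of $C_\lambda$ with a scalar multiple of the Young symmetrizer $c_\lambda$ for tableau shape $\lambda$: the classical identity $c_\lambda^2 = n_\lambda c_\lambda$ together with a direct check that the prefactor in \eqref{H'}, \eqref{HH'}, \eqref{c311'} equals $n_\lambda^{-1}$ then gives the result with essentially no index manipulation.
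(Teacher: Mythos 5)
Your treatment of $C_{21}$ is correct and is essentially the paper's own computation: the paper likewise expands $C_{21}^2(h)_{ABC}=\frac13(H_{ABC}+H_{ACB}-H_{CBA}-H_{BCA})$ with $H=C_{21}(h)$ and collapses it using the vanishing of $\sum_{(B,C)}h_{[C\underline{A}B]}$; your reorganization via the $BC$-symmetry together with the cyclic identity $k_{ABC}+k_{BCA}+k_{CAB}=0$ is a valid equivalent bookkeeping.

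For $\lambda=22$ and $\lambda=311$, however, what you have written is a plan, not a proof: the assertion that ``the normalizations turn out to be exactly the factors that bring the overall coefficient to $1$'' is precisely the content of the proposition, and the specific cancellation mechanisms that make it true are not supplied. Concretely, the paper's proof of the $C_{22}$ case hinges on establishing $\sum_{(B,C)}H_{D[A\underline{B}C]}=\frac32 H_{DABC}$ for $H$ in the image of $C_{22}$, which in turn requires showing $\sum_{(B,C)}H_{DCBA}=-H_{DABC}$ by identifying which of the eight permuted terms vanish (e.g.\ $\sum_{(B,C)}h_{D[C\underline{A}B]}=0$ and $\sum_{(B,C)}(h_{C[D\underline{B}A]}+h_{B[A\underline{C}D]})=0$). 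For $C_{311}$ the paper evaluates six sums $S_1,\dots,S_6$, and the nontrivial ones (such as $\sum_{(D,B,C)}H_{EDCBA}$) reduce correctly only because a double permutation sum of a quantity skewsymmetric in two indices that are simultaneously symmetrized vanishes; your proposal does not identify this mechanism, and without it the ``orbit'' organization you describe does not by itself yield the coefficients $\frac{3}{10},\frac{1}{10},\dots$ that must sum to $1$. Your alternative route through the Young symmetrizers is legitimate --- the classical identity $\mathfrak c_\lambda^2=n_\lambda\mathfrak c_\lambda$ with $n_{21}=3$, $n_{22}=12$, $n_{311}=20$ does give idempotence --- but the required input is not merely that the prefactors in \eqref{H'}, \eqref{HH'}, \eqref{c311'} equal $n_\lambda^{-1}$; one must verify that the entire operator $C_\lambda$ coincides with the action of the normalized symmetrizer, and that identification (carried out term by term in Appendix \ref{app}) is an index computation of the same size as the direct verification. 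So either route requires the explicit combinatorial work that the proposal defers.
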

\begin{proof}
(1)    Denote $H_{ABC}:=C_{21}\left(h \right)_{ABC} =\frac23\left(h_{[A\underline{B}C]}+h_{[A\underline{C}B]} \right).$
Then  \begin{equation}\label{21'}
\begin{aligned}
C_{21}^2(h)_{ABC}=&C_{21}\left(H \right)_{ABC} = \frac13\left(H_{ABC}+H_{ACB}-H_{CBA}-H_{BCA}\right) \\=& \frac29\sum_{(B,C)}\left(h_{[A\underline{B}C]}+h_{[A\underline{C}B]} -h_{[ C\underline{B}A]}-h_{[C\underline{A}B]} \right) \\=&\frac23\sum_{(B,C)} h_{[A\underline{B}C]}   =  H_{ABC}=C_{21} (h)_{ABC},
\end{aligned}\end{equation}by definition (\ref{H'}) of $C_{21}$, where $\sum_{(B,C)}h_{[C\underline{A}B]}=0$. Thus $C_{21}$ is a projection.

(2)
Denote $H_{DABC}:=C_{22}\left(h \right)_{DABC}=\frac16\sum_{(A,D),(B,C)} \left(h_{D[A\underline{B}C]}+ h_{B[C\underline{D}A]}\right).$ Then \begin{equation}
\begin{aligned}\label{hc22}
C_{22}^2\left(h\right)_{D ABC}=C_{22}\left(H \right)_{D ABC}=\frac16\sum_{(A,D),(B,C)}\left(H_{D[A\underline{B}C]}+ H_{B[C\underline{D}A]}\right).\end{aligned}\end{equation}
Note that \begin{equation}\begin{aligned}\label{c22id0}
\sum_{(B,C)}H_{D[A\underline{B}C]}=\frac12\sum_{(B,C)}\left(H_{DABC} -H_{DCBA}\right)=H_{DABC}-\frac12\sum_{(B,C)}H_{DCBA},
\end{aligned}\end{equation}
by $H_{DABC}$  symmetric in $B,C$ by the definition. But
\begin{equation}\begin{aligned}\label{c22id2}
\sum_{(B,C)}H_{DCBA}=&\frac16\sum_{(B,C)}\sum_{(C,D)}\sum_{(A,B)} \left(h_{D[C\underline{B}A]}+ h_{B[A\underline{D}C]}\right) \\=&\frac16\sum_{(B,C)}\bigg(h_{D[C\underline{B}A]} +h_{D[C\underline{A}B]}+h_{B[A\underline{D}C]} +h_{A[B\underline{D}C]}\\&\qquad\qquad+h_{C[D\underline{B}A]} +h_{C[D\underline{A}B]}+h_{B[A\underline{C}D]} +h_{A[B\underline{C}D]}\bigg) \\=&\frac16\sum_{(B,C)}\bigg(h_{D[C\underline{B}A]} +h_{B[A\underline{D}C]}+h_{C[D\underline{A}B]} +h_{A[B\underline{C}D]} \bigg)\\=&-\frac16\sum_{(A,D),(B,C)} \left(h_{D[A\underline{B}C]}+ h_{B[C\underline{D}A]}\right)=-H_{DABC},
\end{aligned}\end{equation} where the third identity holds since we obviously have
\begin{equation}\begin{aligned}\sum_{(B,C)}h_{D[C\underline{A}B]} =\sum_{(B,C)}h_{A[B\underline{D}C]}=0,\quad
\sum_{(B,C)}\left(h_{C[D\underline{B}A]} +h_{B[A\underline{C}D]}\right)=0.
\end{aligned}\end{equation}Substitute  (\ref{c22id2}) into (\ref{c22id0}) to get
\begin{align}\label{c22id3}
\sum_{(B,C)}H_{D[A\underline{B}C]}=\frac32H_{DABC}.
\end{align}By relabeling indices, we get $
\sum_{(A,D)}H_{B[C\underline{D}A]}=\frac32H_{BCDA}.
$
Substitute  them into (\ref{hc22}) to get   \begin{equation*}
\begin{aligned}
C_{22}^2\left(h\right)_{D ABC}=&\frac14 \left(\sum_{(A,D)}H_{DABC}+\sum_{(B,C)}H_{BCDA}\right) \\=&\frac{1}{12}\sum_{(A,D),(B,C)} \bigg(h_{D[A\underline{B}C]}+ h_{B[C\underline{D}A]} + h_{B[C\underline{D}A]}+ h_{D[A\underline{B}C]}\bigg)\\=&\frac{1}{6}\sum_{(A,D),(B,C)} \left(h_{D[A\underline{B}C]}+ h_{B[C\underline{D}A]}\right)   \\=&H_{DABC}= C_{22}\left(h\right)_{D ABC}.
\end{aligned}\end{equation*}    Thus $C_{22}$ is a projection.

(3) Denote  $H_{EDABC}:=C_{311}\left(h \right)_{EDABC}=\frac{3}{10}\sum_{(D,B,C)} h_{[E\underline{D}A\underline{B}C]}. $ Then  {\small \begin{equation*}\begin{aligned}
C_{311}^2\left(h\right)_{EDABC}= & C_{311}\left(H\right)_{EDABC} = \frac{3}{10}\sum_{(D,B,C)} H_{[E\underline{D}A\underline{B}C]}
\\=&\frac{1}{20}\sum_{(D,B,C)}\left(H_{EDABC} -H_{EDCBA}-H_{ADEBC} +H_{ADCBE} + H_{CDEBA}- H_{CDABE}  \right) \\=&:S_1+S_2+S_3+S_4+S_5+S_6.
\end{aligned}\end{equation*}}Since $H_{EDABC}$ is symmetric in $D,B,C$ and skewsymmetric in $E,A$ by definition, we have \begin{equation}\begin{aligned} \label{s1}
S_1=S_3=\frac{1}{20}\sum_{(D,B,C)}H_{EDABC} =\frac{3}{10} H_{EDABC}.
\end{aligned}\end{equation}The second sum is
\begin{equation}\begin{aligned}\label{s2}
S_2=&-\frac{1}{20}\sum_{(D,B,C)}H_{EDCBA} = -\frac{3}{200}\sum_{(D,B,C)}\sum_{(D,B,A)}  h_{[E\underline{D}C\underline{B}A]} \\=&\frac{3}{200}\sum_{(D,B,C)}\sum_{(D,B,A)}  h_{[E\underline{D}A\underline{B}C]} \\=&\frac{3}{200}\sum_{(D,B,C)} \left(\sum_{(D,B)} h_{[E\underline{D}A\underline{B}C]} +\sum_{(A,B)}h_{[E\underline{A}D\underline{B}C]} +\sum_{(A,D)}h_{[E\underline{D}B\underline{A}C]}  \right) \\=&\frac{3}{200}\sum_{(D,B,C)}  \sum_{(D,B)} h_{[E\underline{D}A\underline{B}C]} =
 \frac{3}{100}\sum_{(D,B,C)}  h_{[E\underline{D}A\underline{B}C]}=\frac{1}{10}H_{EDABC}.
\end{aligned}\end{equation}Here the fifth identity holds because  $h_{[E\underline{A}D\underline{B}C]}$ is skewsymmetric in $D,C,$ which are also symmetrized,   while $h_{[E\underline{D}B\underline{A}C]}$ is skewsymmetric in $B,C,$ which are also symmetrized. By exchanging  $A$ and $E$ in (\ref{s2}), we get \begin{equation}\begin{aligned}\label{s4} S_4=\frac{1}{20}\sum_{(D,B,C)}H_{ADCBE}= -\frac{1}{10}H_{ADEBC}=\frac{1}{10}H_{EDABC}, \end{aligned}\end{equation}by  $H_{EDABC}$ skewsymmetric in $E$ and $A.$  Moreover, we have  \begin{equation}\begin{aligned}\label{s5} S_5=\frac{1}{20}\sum_{(D,B,C)}H_{CDEBA}= -\frac{1}{20}\sum_{(D,B,C)}H_{EDCBA}=S_2=\frac{1}{10}H_{EDABC},\\ S_6=-\frac{1}{20}\sum_{(D,B,C)}H_{CDABE} =\frac{1}{20}\sum_{(D,B,C)}H_{ADCBE} =S_4 =\frac{1}{10}H_{EDABC}.\end{aligned}\end{equation}  The summation of (\ref{s1})-(\ref{s5}) give us
\begin{equation}\begin{aligned}\label{311'}
C_{311}^2\left(h\right)_{EDABC} =H_{EDABC}=C_{311}\left(h\right)_{EDABC}.
\end{aligned}\end{equation}
Thus $C_{311}$ is a projection.
\end{proof}
 {\it Proof of Proposition \ref{p1}.}
(1)   Suppose that $h$ satisfy (\ref{HABC}), then $h=C_{21}(h).$ Thus $h$ belongs to the Weyl module. If $h$ belongs to the  Weyl module, then $h=C_{21}(h')$ for some $h'\in\otimes^3\mathbb C^k.$ Then $C_{21}(h)=C_{21}^2(h')=C_{21}(h')=h$ by Proposition  \ref{pro}, i.e. (\ref{HABC}) holds.

(2)-(3) can be proved similarly.
\qed

The construction of    differential operators $\mathcal D_0,\mathcal D_1,\mathcal D_2',\mathcal D_2''$ given in (\ref{co3}) is motivated by the projection maps $C_{21},C_{22},C_{311}$. Define \begin{align}\label{d2}
{\mathcal D}_2''=2\breve{\mathcal D}_2''+\widehat{\mathcal D}_2'':\Gamma \left(\Omega,\mathcal V_2\right)\rightarrow\Gamma \left(\Omega,\mathcal V_3''\right),
\end{align}
where \begin{align}\label{D''}
\breve{\mathcal D}_2''h= \sum_{(D,B,C)}\nabla_{[E}\nabla_{\underline{D}}h_{A\underline{B}C]} \omega^{EDABC},\quad \widehat{\mathcal D}_2''h= \sum_{(D,B,C)}\nabla_{D}\nabla_{[E}h_{A\underline{B}C]} \omega^{EDABC}.
\end{align}
In fact, by comparing (\ref{D0}) and \eqref{D''} with (\ref{H'})-(\ref{c311'}), we have
 \begin{equation}\begin{aligned}\label{D1}
\mathcal D_1F=& \frac32 C_{21}\left(\nabla_A\nabla_BF_C\omega^{ABC}\right),\\  
\mathcal D_2'h=&6 C_{22}\left(\nabla_Dh_{ABC}\omega^{DABC}\right),\\\breve{\mathcal D}_2''h=&\frac{10}{3}C_{311}\left(\nabla_E\nabla_Dh_{ABC} \omega^{EDABC}\right),\\\widehat{\mathcal D}_2''h=&\frac{10}{3}C_{311}\left(\nabla_D\nabla_E h_{ABC} \omega^{EDABC}\right).
\end{aligned}\end{equation}Because the image of $C_\lambda$ is $ V_\lambda,$ we have the following corollary.
\begin{cor}\label{cor21} (1)
$\mathcal D_0f\in\Gamma\left(\mathcal V_1\right)$ for $f\in\Gamma\left(\mathcal V_0\right);$ (2) $\mathcal D_1F\in\Gamma\left(\mathcal V_2\right)$ for $F\in\Gamma\left(\mathcal V_1\right);$  (3) $\mathcal D_2'h\in\Gamma\left(\mathcal V_3'\right) $  and $\mathcal D_2''h\in\Gamma\left(\mathcal V_3''\right)$ for $h\in\Gamma\left(\mathcal V_2\right).$
\end{cor}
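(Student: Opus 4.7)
The plan is to exploit the formulas (\ref{D1}), which express each operator $\mathcal D_j$ as a scalar multiple of the projection $C_\lambda$ (with $\lambda=21,\,22$ or $311$) applied to a tensor product of derivatives. Since Proposition \ref{pro} says $C_\lambda$ is an idempotent whose image is by definition the Weyl module $V_\lambda$, the output automatically lies in $V_\lambda\otimes\mathbb S^\pm$, which by Proposition \ref{p1} is equivalent to the defining symmetry relations being satisfied.

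Part (1) is essentially a definition check: $\mathcal V_1=V_{10\cdots}\otimes\mathbb S^-\cong\mathbb C^k\otimes\mathbb S^-$ imposes no constraint on a $k$-tuple of $\mathbb S^-$-valued functions, so $(\nabla_A f)$ is a section of $\mathcal V_1$. For (2), I would match the formula for $(\mathcal D_1 F)_{ABC}$ in (\ref{D0}) against the first identity of (\ref{D1}), $\mathcal D_1 F=\tfrac32 C_{21}(\nabla_A\nabla_B F_C\,\omega^{ABC})$; the verification amounts to unpacking the two-index skewsymmetrization bracket in the definition (\ref{H'}) of $C_{21}$ and reading off that $\tfrac32\cdot\tfrac23=1$. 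Proposition \ref{pro} together with Proposition \ref{p1}(1) then yields $\mathcal D_1F\in\Gamma(\mathcal V_2)$.

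The same strategy handles $\mathcal D_2'$ via $C_{22}$: compare (\ref{D0}) with (\ref{HH'}), note that the coefficient $6$ in (\ref{D1}) cancels the $\tfrac16$ in $C_{22}$, then invoke Proposition \ref{pro} and Proposition \ref{p1}(2) to obtain $\mathcal D_2'h\in\Gamma(\mathcal V_3')$. For $\mathcal D_2''$, I would first invoke the decomposition (\ref{d2}), $\mathcal D_2''=2\breve{\mathcal D}_2''+\widehat{\mathcal D}_2''$, and identify each of the two pieces with $\tfrac{10}{3}C_{311}$ applied to an appropriate ordering of $\nabla_E\nabla_D h_{ABC}\,\omega^{EDABC}$, as in the last two lines of (\ref{D1}). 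Proposition \ref{pro} then places both summands in $\Gamma(V_{3110\cdots}\otimes\mathbb S^-)=\Gamma(\mathcal V_3'')$.

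The step I expect to be the main obstacle is reconciling the compact two-index antisymmetrization $[E\cdots A]$ together with the explicit $\Delta_{BC}$-correction in the formula (\ref{D0}) for $\mathcal D_2''$ with the three-index antisymmetrization $[E\underline{D}A\underline{B}C]$ appearing in (\ref{D''}). To do this I would expand the three-index skewsymmetrization in $\breve{\mathcal D}_2''$ and $\widehat{\mathcal D}_2''$ over the six orderings of $(E,A,C)$, then symmetrize over the permutations of $(D,B,C)$; the cross-terms involving $\nabla_B\nabla_C+\nabla_C\nabla_B$ collapse into $\Delta_{BC}$ via the anticommutator identity and its commutation property (\ref{Delta}), precisely reproducing the $\Delta_{BC}h_{[E\underline{D}A]}$ correction. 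Once this combinatorial reshuffle is carried out, the rest is bookkeeping of signs and permutation weights, and the conclusion $\mathcal D_2''h\in\Gamma(\mathcal V_3'')$ follows directly from Proposition \ref{pro}.
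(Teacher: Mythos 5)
Your proposal is correct and follows essentially the same route as the paper: the paper derives the identities (\ref{D1}) expressing $\mathcal D_1$, $\mathcal D_2'$, $\breve{\mathcal D}_2''$, $\widehat{\mathcal D}_2''$ as multiples of $C_{21}$, $C_{22}$, $C_{311}$ applied to tensors of derivatives, and then deduces the corollary from the fact that the image of $C_\lambda$ is $V_\lambda$ (Proposition \ref{pro}). The only remark is that the reconciliation of the $\Delta_{BC}$-corrected formula in (\ref{D0}) with the $C_{311}$-definition (\ref{d2})--(\ref{D''}), which you flag as the main obstacle, is not actually needed for the corollary (the paper takes (\ref{d2})--(\ref{D''}) as the definition of $\mathcal D_2''$ and proves the explicit formula separately as Proposition 3.1), though your sketch of that computation matches the paper's.
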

\section{Differential operators in the short sequence of the  Dirac complex}
\subsection{The   explicit expression of $\mathcal D_2''$} \begin{prop}For $h\in\Gamma\left(\mathcal V_2\right),$
\begin{align}\label{D2}
\left({\mathcal D}_2''h\right)_{EDABC}=\frac12\sum_{(D,B,C)}\left( 2\nabla_{[E}\nabla_{\underline{D}}h_{A]BC} +\nabla_{D}\nabla_{[E}h_{A]BC} +\Delta_{BC}h_{[E\underline{D}A]}\right).
\end{align}
\end{prop}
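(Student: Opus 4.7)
My plan is to verify (\ref{D2}) by direct expansion, starting from the definition $\mathcal{D}_2''h = 2\breve{\mathcal{D}}_2''h + \widehat{\mathcal{D}}_2''h$ given in (\ref{d2})--(\ref{D''}). The point of contact between the two expressions is that the left-hand side involves the three-fold antisymmetrization $[E\underline{D}A\underline{B}C]$ over $E,A,C$, while the right-hand side uses only the pair antisymmetrization $[E\underline{D}A]$. The bridge is the elementary identity
\[
3\,T_{[EAC]}=T_{[EA]C}+T_{[AC]E}+T_{[CE]A},
\]
valid for any three-tensor $T$ in the indicated slots; it follows by grouping the six signed permutations of $(E,A,C)$ according to which letter sits in the third slot.

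Applying this identity to $\breve{T}_{xyz}:=\nabla_x\nabla_Dh_{yBz}$ (so that $\nabla_{[E}\nabla_{\underline{D}}h_{A\underline{B}C]}=\breve{T}_{[EAC]}$) and to $\widehat{T}_{xyz}:=\nabla_D\nabla_xh_{yBz}$, each three-fold antisymmetrization decomposes into three pair terms. The $\breve{T}_{[EA]C}$- and $\widehat{T}_{[EA]C}$-pieces already match the first two terms $\nabla_{[E}\nabla_{\underline{D}}h_{A]BC}$ and $\nabla_D\nabla_{[E}h_{A]BC}$ on the right-hand side of (\ref{D2}). The four remaining non-standard pieces (of types $T_{[AC]E}$ and $T_{[CE]A}$) carry the free index $E$ inside $h$ rather than inside a $\nabla$, and must be processed further.

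The reduction of these non-standard pieces will use three ingredients together: (i) the $V_{210\dots}$-characterization from Proposition \ref{p1}(1), which gives the symmetry $h_{ABC}=h_{ACB}$ and, after re-expressing (\ref{HABC}), the cyclic vanishing $h_{ABC}+h_{BCA}+h_{CAB}=0$; (ii) the $S_3$-invariance of $\sum_{(D,B,C)}$, allowing arbitrary relabeling of the dummies $D,B,C$; and (iii) the anticommutator $\nabla_B\nabla_C+\nabla_C\nabla_B=\Delta_{BC}$ from (\ref{Delta}), together with the commutation $[\Delta_{BC},\nabla_A]=0$. A typical intermediate step is $\sum_{(D,B,C)}\nabla_A\nabla_Dh_{CBE}=-\tfrac12\sum_{(D,B,C)}\nabla_A\nabla_Dh_{EBC}$, obtained from $h_{CBE}+h_{BCE}=-h_{EBC}$ (a direct consequence of the cyclic identity and slot-$(2,3)$-symmetry) combined with invariance of the sum under $B\leftrightarrow C$. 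Such reductions either put $E$ back into the first slot of $h$ or into a $\nabla$, feeding additional multiples of the two standard right-hand-side terms, while residual anticommutator swaps produce scalar $\Delta_{BC}$-contributions that assemble into $\Delta_{BC}h_{[E\underline{D}A]}$.

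The hard part is the bookkeeping: twelve pair-antisymmetrized summands are to be reorganized under an $S_3$-sum with three kinds of symmetry manipulations interleaved, and one must ensure that the scalar operator appearing in the residual term is exactly $\Delta_{BC}$ rather than $\Delta_{BD}$ or $\Delta_{CD}$. This forces the cyclic $V_{210\dots}$-identity and the $(D,B,C)$-summation symmetry to act together, redistributing the anticommutators onto the correct pair. Once all coefficients are tallied, the total yields precisely $\tfrac12\sum_{(D,B,C)}[2\nabla_{[E}\nabla_{\underline{D}}h_{A]BC}+\nabla_D\nabla_{[E}h_{A]BC}+\Delta_{BC}h_{[E\underline{D}A]}]$, proving (\ref{D2}).
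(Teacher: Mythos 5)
Your plan is correct and follows essentially the same route as the paper's proof: expand the triple skewsymmetrization over $E,A,C$ into pair terms, reduce via the $V_{210\dots}$-characterization, relabel dummies under $\sum_{(D,B,C)}$, and collect the anticommutators into $\Delta_{BC}h_{[E\underline{D}A]}$. The only difference is cosmetic: the paper groups the six signed permutations by the first slot, so that $\sum_{(B,C)}h_{[A\underline{B}C]}=\frac32 h_{ABC}$ applies directly, whereas your grouping by the third slot forces the equivalent cyclic identity $h_{ABC}+h_{BCA}+h_{CAB}=0$; both tallies close with the same coefficients.
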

\begin{proof}
Note that
{\small\begin{equation*}\begin{aligned}
6\sum_{(D,B,C)} \nabla_{[E}\nabla_{\underline{D}}h_{A\underline{B}C]}
=&2\sum_{(D,B,C)}\left(\nabla_{E}\nabla_{{D}}h_{[A\underline{B}C]} -\nabla_{A}\nabla_{{D}}h_{[E\underline{B}C]} +\nabla_{C}\nabla_{{D}}h_{[E\underline{B}A]}\right) \\=&\sum_{(D,B,C)}\left(\nabla_{E}\nabla_{{D}} \sum_{(B,C)}h_{[A\underline{B}C]} -\nabla_{A}\nabla_{{D}}\sum_{(B,C)}h_{[E\underline{B}C]} +\sum_{(B,C)}\nabla_{B}\nabla_{C}h_{[E\underline{D}A]}\right)
\\=&\sum_{(D,B,C)}\left(\frac32 \nabla_{E}\nabla_{D}h_{ABC} - \frac32\nabla_{A}\nabla_{D}h_{EBC}  +\Delta_{BC}h_{[E\underline{D}A]} \right)\\=& \sum_{(D,B,C)} \left(3\nabla_{[E}\nabla_{\underline{D}}h_{A]BC} +\Delta_{BC}h_{[E\underline{D}A]}\right),
\end{aligned}\end{equation*}}
by using characterization  (\ref{HABC}) of $h\in\Gamma\left(\mathcal V_2\right)$ and \begin{equation}\begin{aligned}\label{symsym}
\sum_{(D,B,C)}f_{\dots D\dots B \dots C\dots}=&\frac12\sum_{(D,B,C)}\sum_{(B,C)}f_{\dots D\dots B \dots C\dots}=\sum_{(D,B,C)} f_{\dots D\dots C \dots B\dots},\\ f_{\dots[EAC]\dots}=&\frac13\left(f_{\dots E[AC]\dots}-f_{\dots A[EC]\dots}+f_{\dots C[EA]\dots}\right),
\end{aligned}\end{equation} by the definitions of symmetrization and  skewsymmetrization in (\ref{sym}). Similarly,
{\small\begin{equation*}\begin{aligned} 6\sum_{(D,B,C)}\nabla_{D}\nabla_{[E}h_{A\underline{B}C]} =&2\sum_{(D,B,C)}\left(\nabla_{D}\nabla_{E}h_{[A\underline{B}C]} -\nabla_{D}\nabla_{A}h_{[E\underline{B}C]} +\nabla_{D}\nabla_{C}h_{[E\underline{B}A]}  \right) \\=&\sum_{(D,B,C)}\left(\nabla_{D}\nabla_{E} \sum_{(B,C)}h_{[A\underline{B}C]} -\nabla_{D}\nabla_{A}\sum_{(B,C)}h_{[E\underline{B}C]} +\sum_{(B,C)}\nabla_{B}\nabla_{C}h_{[E\underline{D}A]}  \right) \\=&\sum_{(D,B,C)}\left( \frac32 \nabla_{D}\nabla_{E}h_{ABC} - \frac32\nabla_{D}\nabla_{A}h_{EBC} +\Delta_{BC}h_{[E\underline{D}A]}\right)  \\=&\sum_{(D,B,C)} \left(3\nabla_{D}\nabla_{[E}h_{A]BC} +\Delta_{BC}h_{[E\underline{D}A]}\right).
\end{aligned}\end{equation*}}Then (\ref{D2}) follows form the summation of the above two identities  and  the definition of $\mathcal D_2''$  in (\ref{d2})-(\ref{D''}).
\end{proof}
\subsection{The short sequence (\ref{co3}) is a complex}
\begin{thm}\label{pell}
{\rm (1)} $\mathcal D_1\circ\mathcal D_0=0.$ {\rm (2)} $\mathcal D_2'\circ\mathcal D_1=0.$  {\rm (3)} $\mathcal D_2''\circ\mathcal D_1=0.$
\end{thm}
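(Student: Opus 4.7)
The plan is to prove each of the three identities by direct computation, exploiting two structural facts emphasized earlier in the paper: (i) the anticommutator $\{\nabla_B,\nabla_C\} = \Delta_{BC}$ is a scalar operator commuting with every $\nabla_A$, by \eqref{Delta}; and (ii) for $h\in\mathcal V_2$ one has the extra symmetry $h_{ABC}=h_{ACB}$ from \eqref{Hcomm}. The unifying pattern in all three parts is that the outer (skew)symmetrizations defining $\mathcal D_1$, $\mathcal D_2'$ and $\mathcal D_2''$ supply exactly the permutations of indices needed to combine two consecutive $\nabla$'s into an anticommutator $\Delta$, which may then be pulled past the remaining $\nabla$ via \eqref{Delta} and cancelled by a matching term of opposite sign.

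Part (1) closes in one line: writing
\[
(\mathcal D_1\mathcal D_0 f)_{ABC} = \sum_{(B,C)}\nabla_{[A}\nabla_{\underline B}\nabla_{C]}f = \tfrac12\sum_{(B,C)}\bigl(\nabla_A\nabla_B\nabla_C - \nabla_C\nabla_B\nabla_A\bigr)f,
\]
the $(B,C)$-symmetrization converts the two pieces into $\nabla_A\Delta_{BC}f$ and $\Delta_{BC}\nabla_A f$, which cancel by \eqref{Delta}. For part (2), I would substitute $h = \mathcal D_1 F$ into the definition of $\mathcal D_2'$ in \eqref{D0} and expand the inner skewsymmetrization of $h$. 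The outer $(A,D)$-symmetrization then pairs terms of the form $\nabla_D\nabla_A\nabla_\bullet F_\bullet$ with $\nabla_A\nabla_D\nabla_\bullet F_\bullet$ into $\Delta_{AD}\nabla_\bullet F_\bullet$; analogous $(B,C)$-regroupings and the contribution of the second summand $\nabla_B h_{[C\underline D A]}$ of $\mathcal D_2'$ supply the matching $\Delta$-terms with opposite signs, which cancel after commuting the $\Delta$'s through the remaining $\nabla$'s by \eqref{Delta}.

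Part (3) follows the same principle but is the most intricate, because $\mathcal D_2''$ has the three summands in \eqref{D2} and the outer $(D,B,C)$-symmetrization is combined with a skewsymmetrization in $\{E,A\}$. My plan is to apply $\mathcal D_2''$ termwise to $h=\mathcal D_1 F$: the two double-$\nabla$ pieces $2\nabla_{[E}\nabla_{\underline D}h_{A]BC}$ and $\nabla_D\nabla_{[E}h_{A]BC}$ contribute four-fold products $\nabla\nabla\nabla\nabla F$ whose $(D,B,C)$-symmetrization generates anticommutators $\Delta_{DB}$, $\Delta_{DC}$ and $\Delta_{BC}$, while the scalar summand $\Delta_{BC}h_{[E\underline D A]}$ supplies exactly the counter-terms needed for the final cancellation via \eqref{Delta}. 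The main obstacle is the combinatorial bookkeeping in (3): one must simultaneously respect the skewsymmetrization in $\{E,A\}$, the $(D,B,C)$-symmetrization from $\mathcal D_2''$, the internal $(A,C)$-skew and $(B,C)$-symmetrization of $\mathcal D_1 F$, and the underlined-index convention of \eqref{sym}. Keeping signs, multiplicities, and underlined indices consistent across the three summands of \eqref{D2} is the delicate point; the rearrangement identities from the proof of Proposition \ref{pro}, in particular the manipulation producing \eqref{c22id3}, should serve as useful templates.
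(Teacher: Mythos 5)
Your proposal follows essentially the same route as the paper: all three identities are verified by direct computation, using the outer (skew)symmetrizations to assemble consecutive $\nabla$'s into the scalar anticommutators $\Delta_{BC}$, which are then moved past the remaining $\nabla$'s via \eqref{Delta} and cancelled in pairs — exactly the mechanism in the paper's proof (which, for (2) and (3), first rewrites $(\mathcal D_1F)_{ABC}=\nabla_A\nabla_{(B}F_{C)}-\frac12\Delta_{BC}F_A$ and uses the $\mathcal V_2$-characterization \eqref{HABC} to collapse the inner skewsymmetrizations). Part (1) is complete and correct as written; for (2) and (3) your outline identifies the correct pairings and cancellations, with only the combinatorial bookkeeping of part (3) left to be carried out in full.
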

\begin{proof}(1) Note that {\small\begin{equation}\label{dF}
\left(\mathcal D_1F\right)_{ABC}=\frac12\sum_{(B,C)}\left(\nabla_A\nabla_{B}F_{C} -\nabla_C\nabla_BF_A\right)=\nabla_A\nabla_{(B}F_{C)} -\frac12\Delta_{BC}F_A,
\end{equation}}for $F\in\Gamma\left(\Omega,\mathcal V_1\right)$ by definition (\ref{D0}), where \begin{equation*}\begin{aligned}\Delta_{AB}:=& \nabla_A\nabla_B +\nabla_B\nabla_A\\=& \sum_{j,k=1}^n \gamma_j\partial_{Aj} \gamma_k\partial_{Bk} + \sum_{j,k=1}^n\gamma_k\partial_{Bk}\gamma_j\partial_{Aj} =-2\sum_{j=1}^n\partial_{Aj} \partial_{Bj}\ {\rm Id}_{\mathbb S\pm}\end{aligned}\end{equation*}  is symmetric in $A$ and $B.$ Given  $f\in\Gamma\left(\Omega,\mathcal V_0\right),$ and $A,B,C=0,\dots,k-1,$ we have
\begin{equation*} \begin{aligned}\left(\mathcal D_1\circ \mathcal D_0f\right)_{ABC}= \nabla_{A}\nabla_{(B} \nabla_{C)}f-\frac12\Delta_{BC}\nabla_{A}f =\frac12\left(\nabla_A\Delta_{BC}-\Delta_{BC}\nabla_A\right)f=0
\end{aligned}\end{equation*}by (\ref{Delta}).\\
(2) Given  $F\in\Gamma\left(\Omega,\mathcal V_1\right) $  and $D,A,B,C=0,\dots,k-1,$ we have
{ \begin{equation}\begin{aligned}\label{d2d1}
\left(\mathcal D_2'\circ\mathcal D_1F\right)_{D ABC}=&
\sum_{(A,D),(B,C)}\left(\nabla_{D}\left(\mathcal D_1F\right)_{[A\underline{B}C]}+ \nabla_{B}\left(\mathcal D_1F\right)_{[C\underline{D}A]}\right)
\\=&\sum_{(A,D)} \nabla_{D}\sum_{(B,C)}\left(\mathcal D_1F\right)_{[A\underline{B}C]} +\sum_{(B,C)} \nabla_{B}\sum_{(A,D)}\left(\mathcal D_1F\right)_{[C\underline{D}A]}
\\=&\frac32\sum_{(A,D)}\nabla_{D}\left(\mathcal D_1F\right)_{A{B}C}+\frac32\sum_{(B,C)} \nabla_{B}\left(\mathcal D_1F\right)_{C{D}A},
\end{aligned}\end{equation}}
by using  (\ref{HABC}), since $\mathcal D_1F\in \mathcal V_2$ by Corollary \ref{cor21}.   But
\begin{equation*}\begin{aligned}
\sum_{(A,D)}\nabla_D\left(\mathcal D_1F\right)_{ABC}=& \sum_{(A,D)}\nabla_D \left(\nabla_A\nabla_{(B}F_{C)} -\frac12\Delta_{BC}F_A\right)  =  \Delta_{AD}\nabla_{(B}F_{C)} -\Delta_{BC}\nabla_{(D}F_{A)},
\\\sum_{(B,C)}\nabla_{B}\left(\mathcal D_1F\right)_{C DA} =&\sum_{(B,C)}\nabla_B\left(\nabla_{C}\nabla_{(D} F_{A)}-\frac12\Delta_{AD}F_{C}\right)  =  \Delta_{BC}\nabla_{(D}F_{A)} -\Delta_{AD}\nabla_{(B}F_{C)},
\end{aligned}\end{equation*}
by using (\ref{dF}) and (\ref{Delta}). Their summation vanishes, i.e.
$\left(\mathcal D_2'\circ\mathcal D_1F\right)_{D ABC}=0.$\\
(3) By the formulae of $\mathcal D_2''$ in (\ref{D2}), we have
{\small\begin{equation}\begin{aligned}\label{T123}
4\left({\mathcal D}_2''\right.\circ&\left.\mathcal D_1F\right)_{EDABC}=2\sum_{(D,B,C)}\left(2\nabla_{[E} \nabla_{\underline{D}}\left(\mathcal D_1F\right)_{A]BC} +\nabla_{D}\nabla_{[E}\left(\mathcal D_1F\right)_{A]BC} +\Delta_{BC}\left(\mathcal D_1F\right)_{[E\underline{D}A]}\right)
\\=&\sum_{(D,B,C)}\bigg( \left(\nabla_E\nabla_D+\Delta_{DE}\right)\left(\mathcal D_1F\right)_{ABC}-\left( \nabla_A\nabla_D+\Delta_{DA}\right)\left(\mathcal D_1F\right)_{EBC}   +2\Delta_{BC}\left(\mathcal D_1F\right)_{[E\underline{D}A]}\bigg),
\end{aligned}\end{equation}}by
\begin{equation*}
   2\nabla_E\nabla_D+\nabla_D\nabla_E=\nabla_E\nabla_D+\Delta_{DE}.
\end{equation*}
Note that{\small\begin{equation}\begin{aligned}\label{dFF}
2\Delta_{BC}\left(\mathcal D_1F\right)_{[E\underline{D}A]}=& \Delta_{BC}\left(\left(\mathcal D_1F\right)_{E {D}A}- \left(\mathcal D_1F\right)_{A{D}E}\right) \\=&\frac12\Delta_{BC}\bigg(\left(\nabla_E\nabla_D +\Delta_{DE}\right)F_A -\left(\nabla_A\nabla_D+\Delta_{DA}\right)F_E +\left[\nabla_E,\nabla_A\right]F_D\bigg),
\end{aligned}\end{equation}}
by (\ref{dF}) again, and
{ \begin{equation}\begin{aligned}\label{1} \left(\nabla_{E}\nabla_{D} +\Delta_{DE}\right)\left(\mathcal D_1F\right)_{ABC} =&\frac12\left(\nabla_{E}\nabla_{D} +\Delta_{DE}\right)\left(-\Delta_{BC}\cdot F_A +2\nabla_A\nabla_{(B}F_{C)}\right),
\end{aligned}\end{equation}}by    (\ref{dF}).
By exchanging  $A$ and  $E$ in   (\ref{1}), we get
{ \begin{equation}\begin{aligned}\label{3} -\left(\nabla_{A}\nabla_{D} +\Delta_{DA}\right)\left(\mathcal D_1F\right)_{EBC} =&\frac12\left(\nabla_{A}\nabla_{D} +\Delta_{DA}\right)\left(\Delta_{BC}\cdot F_E -2\nabla_E\nabla_{(B}F_{C)}\right).
\end{aligned}\end{equation}}Noting  that in the  sum  of  (\ref{dFF})-(\ref{3}), terms involving  $F_A$ and $F_E$ are canceled each other, respectively, by the commutativity in (\ref{Delta}), we get
{\small\begin{equation*}\begin{aligned}
&4\left({\mathcal D}_2''\circ\mathcal D_1F\right)_{EDABC}\\=& \sum_{(D,B,C)} \left\{\bigg( \left(\nabla_E\nabla_D+\Delta_{DE}\right)\nabla_A\nabla_{(B} F_{C)} -\left(\nabla_A\nabla_D+\Delta_{DA}\right) \nabla_E\nabla_{(B} F_{C)}  \bigg)  +\frac12 \Delta_{BC}[\nabla_E,\nabla_A]F_D \right\}\\=& \sum_{(D,B,C)}\left\{ \bigg( \left(\nabla_E\nabla_D+\Delta_{DE}\right)\nabla_A\nabla_C -\left(\nabla_A\nabla_D+\Delta_{DA}\right) \nabla_E\nabla_C\bigg)F_B +\frac12\Delta_{DC}[\nabla_E,\nabla_A]F_B\right\} \\=& \sum_{(D,B,C)}\left\{\nabla_E\nabla_D\nabla_A\nabla_C -\nabla_E\Delta_{DA}\nabla_C-\nabla_A\nabla_D\nabla_E\nabla_C +\nabla_A\Delta_{DE}\nabla_C  +\frac12 \Delta_{DC}[\nabla_E,\nabla_A]\right\}F_B \\=&\sum_{(D,B,C)}\bigg(-\nabla_E\nabla_A\nabla_D\nabla_C +\nabla_A\nabla_E\nabla_D\nabla_C +\frac12\Delta_{DC}[\nabla_E,\nabla_A]\bigg) F_B\\=&\frac12\sum_{(D,B,C)}\bigg(-[\nabla_E,\nabla_A]\Delta_{DC} +\Delta_{DC}[\nabla_E,\nabla_A]\bigg) F_B=0,
\end{aligned}\end{equation*}}by using the first identity in  (\ref{symsym}) repeatedly.  Here in the third identity  we use anticommutator to move $\nabla_A,\nabla_E$ to the first two positions.  The theorem is proved.
\end{proof}

\section{The boundary complex}
\subsection{The boundary complex}
To prove Hartogs-Bochner extension for monogenic  functions in $k$ vector variables, it is necessary to find the first operator of the boundary complex of the Dirac complex (\ref{co3}). Let us recall the construction  of the boundary complex of a general differential complex on $\mathbb R^{kn} $ (see eg. \cite{Andreotti,AN,Naci,Nacinovich85}):
\begin{align*}
\Gamma\left(\mathbb R^{kn},E^{(0)}\right)\xrightarrow{A_0(x,\partial)} \Gamma\left(\mathbb R^{kn},E^{(1)}\right) \xrightarrow{A_1(x,\partial)}\Gamma\left(\mathbb R^{kn},E^{(2)}\right) \xrightarrow{A_2(x,\partial)}  \cdots.
\end{align*}
We say $u\in\Gamma_c\left(\overline \Omega,E^{(j)}\right)$   has \emph{zero Cauchy data} on the boundary $\partial \Omega$ for $A_j(x,\partial)$ if for any $\psi\in\Gamma_c\left(U,E^{(j+1)}\right),$ i.e. $\psi$ is   compactly supported in a given   open set $U,$ we have
\begin{align*}
\int_\Omega\left\langle A_j(x,\partial)u,\psi\right\rangle {\rm d}V=\int_\Omega\left\langle u,A_j^*(x,\partial)\psi\right\rangle {\rm d}V
\end{align*}
where $\langle\cdot,\cdot\rangle$ is the inner product of $E^{(j)},$ and $A_j^*(x,\partial)$ is the formal adjoint operator of $A_j(x,\partial).$
Given an open set $U.$ Set
\begin{align*}
\mathcal J_{A_j}(\partial \Omega,U):=\left\{u\in\Gamma\left(U,E^{(j)}\right);u\ {\rm has\ zero\ Cauchy\ date\ on}\ \partial \Omega\ {\rm for}\ A_j(x,\partial)\right\}.
\end{align*}
  Then $U\rightarrow \mathcal J_{A_j}(\partial \Omega,U)$ is a sheaf. We must have
\begin{align*}
A_j(x,\partial)\mathcal J_{A_j}(\partial \Omega,U)\subset\mathcal J_{A_{j+1}}(\partial \Omega,U).
\end{align*}
This is because
\begin{equation*}\begin{aligned}
\int_\Omega\left\langle A_{j+1}(x,\partial)A_{j}(x,\partial)u,\psi\right\rangle{\rm d}V=&0=\int_\Omega\left\langle u,A^*_{j}(x,\partial)A^*_{j+1}(x,\partial)\psi\right\rangle{\rm d}V\\=&\int_\Omega\left\langle A_{j}(x,\partial)u,A^*_{j+1}(x,\partial)\psi\right\rangle{\rm d}V,
\end{aligned}\end{equation*}
for any $u\in\mathcal J_{A_j}(\partial \Omega,U)$ and compactly supported $\psi\in\Gamma\left(U,E^{(j+1)}\right),$ by $A_{j+1}A_j=0,A^*_jA^*_{j+1}=0.$
Setting the quotient sheaf
\begin{align*}
Q^{(j)}(U)=\frac{\Gamma\left(U,E^{(j)}\right)}{\mathcal J_{A_j}\left(\partial \Omega,U\right)},
\end{align*}
we obtain a quotient complex of the form
\begin{align*}
Q^{(0)}(\partial \Omega)\xrightarrow{\widehat A_0} Q^{(1)}(\partial \Omega)\xrightarrow{\widehat A_1} Q^{(2)}(\partial \Omega)\xrightarrow{\widehat A_2}\dots
\end{align*}
where $\widehat A_j$ is induced by the differential operator $A_j(x, \partial),$ but is not necessarily a differential operator (cf. \cite{AN}).
\subsection{The formal adjoint operators}
Define the inner product on $\otimes^m\mathbb C^k\otimes \mathbb S^\pm$ as
\begin{equation*}
\langle f,G\rangle_{\otimes^m\mathbb C^k\otimes \mathbb S^\pm}:=\sum_{A_1,\dots,A_m=0}^{k-1} \left\langle f_{A_1\dots A_m}(x),G_{A_1\dots A_m}(x)\right \rangle_{\mathbb S^\pm},
\end{equation*} 
for  $f=f_{A_1\dots A_m}\omega^{A_1\dots A_m},G=G_{B_1\dots B_m}\omega^{B_1\dots B_m}\in  \otimes^m\mathbb C^k\otimes \mathbb S^\pm,$   where $\langle\cdot,\cdot\rangle_{\mathbb S^\pm}$ is the standard inner product on the spinor spaces $\mathbb S^\pm.$ This inner product induces naturally    inner products on subspaces $\mathcal V_j$ and $L^2\left(\mathbb R^{kn},\mathcal V_j\right).$ Let $\mathcal D_j^*$ be the formal adjoint operator of $\mathcal D_j,$  and   \begin{align*}
\hbox{d}V=\bigwedge_{j=0}^{k-1}(\hbox{d}x_{j1}\wedge\dots\wedge \hbox{d}x_{jn}),
\end{align*}
be the standard volume form on $\mathbb R^{kn}.$
\begin{lem}{\rm (1)}
For $G=\left(G_A\right)\in \Gamma_c \left(\mathbb R^{kn},\mathcal V_1\right),$ we have \begin{align}\label{D0star}\mathcal D_0^*G=\sum_{A=0}^{k-1}\nabla_AG_A.\end{align}  \\{\rm (2)} For  $h\in \Gamma_c \left(\mathbb R^{kn},\mathcal V_2\right),$ we have\begin{align}\label{D1star}\left(\mathcal D_1^*h\right)_C=\sum_{A,B=0}^{k-1} \left(\nabla_B\nabla_A h_{A(BC)}  -\frac12\Delta_{AB}h_{C A B}\right).\end{align}
\end{lem}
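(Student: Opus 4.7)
The plan is to derive both adjoints directly by integration by parts, using that each single Dirac operator $\nabla_A=\sum_{j=1}^{n}\gamma_j\partial_{Aj}$ is formally self-adjoint on $L^2(\mathbb{R}^{kn},\mathbb{S}^\pm)$. Indeed, the sign produced by integrating $\partial_{Aj}$ by parts is cancelled by $\gamma_j^*=-\gamma_j$ from Lemma~\ref{gamma}, so $\nabla_A^*=\nabla_A$. Two integrations then give $(\nabla_A\nabla_B)^*=\nabla_B\nabla_A$ (note the reversed order), and the scalar operator $\Delta_{BC}=-2\sum_j\partial_{Bj}\partial_{Cj}\,{\rm Id}$ is formally self-adjoint as well. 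Compact support of all sections removes every boundary contribution, so these are the only tools needed.

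Part (1) is then immediate. A single integration by parts in
\[
\int_{\mathbb{R}^{kn}}\langle\mathcal{D}_0 f,G\rangle\,{\rm d}V=\sum_{A=0}^{k-1}\int_{\mathbb{R}^{kn}}\langle\nabla_A f,G_A\rangle_{\mathbb{S}^-}\,{\rm d}V
\]
transfers each $\nabla_A$ onto $G_A$ and yields (\ref{D0star}).

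For part (2), I would begin from the rewriting $(\mathcal{D}_1F)_{ABC}=\nabla_A\nabla_{(B}F_{C)}-\tfrac12\Delta_{BC}F_A$ already proved in (\ref{dF}). Pairing against $h=(h_{ABC})$ and integrating twice by parts in the Laplacian-free piece sends $\nabla_A\nabla_B$ and $\nabla_A\nabla_C$ (coming from the $(B,C)$-symmetrization) onto $h$ as $\nabla_B\nabla_A$ and $\nabla_C\nabla_A$ respectively. Relabeling the dummy pair $(B,C)$ in the second copy and invoking the symmetry $h_{ACB}=h_{ABC}$ from (\ref{Hcomm}) collapses both into $\sum_{A,B,C}\langle F_C,\nabla_B\nabla_A h_{A(BC)}\rangle$. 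For the $\Delta_{BC}$-term, one integration by parts, followed by swapping dummies $A\leftrightarrow C$ and using $h_{CBA}=h_{CAB}$ (again by (\ref{Hcomm})), produces $-\tfrac12\sum_{A,B,C}\langle F_C,\Delta_{AB}h_{CAB}\rangle$. Summing the two contributions gives (\ref{D1star}).

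The only step requiring care is the repeated use of the hidden symmetry of $h\in\Gamma(\mathcal{V}_2)$ supplied by Proposition~\ref{p1}(1); it is precisely what allows the $(B,C)$-symmetrized piece to reassemble into the index pattern $h_{A(BC)}$ after dummy relabeling, and what converts $h_{CBA}$ into the claimed $h_{CAB}$ in the $\Delta$-term. Without this symmetry the two integrated-by-parts expressions would not collapse into the stated formula. Everything else in the argument is straightforward index bookkeeping together with the self-adjointness facts assembled in the opening paragraph.
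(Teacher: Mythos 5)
Your proposal is correct and follows essentially the same route as the paper: establish $\nabla_A^*=\nabla_A$ from $\gamma_j^*=-\gamma_j$ and integration by parts, read off part (1) directly, and for part (2) pair the rewriting (\ref{dF}) of $\mathcal D_1F$ against $h$, integrate by parts twice, and relabel dummy indices. The only cosmetic difference is that the collapse into $h_{A(BC)}$ and $h_{CAB}$ already follows from relabeling the summed indices alone, so the appeal to the symmetry (\ref{Hcomm}) is harmless but not actually required.
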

\begin{proof}
(1) For $f\in \Gamma_c \left(\mathbb R^{kn},\mathbb S^\pm\right),g\in \Gamma_c \left(\mathbb R^{kn},\mathbb S^\mp\right),$  we have
\begin{equation*}
\begin{aligned}
\int_{\mathbb R^{kn}}\left\langle\nabla_Af,g\right\rangle_{\mathbb S^\mp}\hbox{d}V =&\int_{\mathbb R^{kn}}\sum_i\left\langle\gamma_i\partial_{Ai}f, g\right\rangle_{\mathbb S^\mp}\hbox{d}V \\=&\int_{\mathbb R^{kn}}\sum_i\left\langle f, \gamma_i\partial_{Ai}g\right\rangle_{\mathbb S^\pm}\hbox{d}V =\int_{\mathbb R^{kn}}\left\langle f,{\nabla_Ag}\right\rangle_{\mathbb S^\pm}\hbox{d}V,
\end{aligned}\end{equation*}
for  $A=0,\dots,k-1,$ by integration by part and using $\gamma_j^*=-\gamma_j$ in Lemma \ref{gamma}. Thus the formal adjoint $\nabla_A^*$ of $\nabla_A$ satisfies
\begin{align}\label{nablastar}
\nabla_A^*=\nabla_A.
\end{align}Consequently, for $G=\left(G_A\right)\in \Gamma_c \left(\mathbb R^{kn},\mathcal V_1\right),$ we have \begin{equation*}
\begin{aligned}
\left(f,\mathcal D_0^*G\right)=\left(\mathcal D_0f,G\right)=\sum_{A=0}^{k-1}\int_{\mathbb R^{kn}}\left\langle\nabla_Af,{G_A}\right\rangle_{\mathbb S^-}\hbox{d}V=\int_{\mathbb R^{kn}}\left\langle f,\sum_{A=0}^{k-1}\nabla_AG_A \right\rangle_{\mathbb S^+}\hbox{d}V.
\end{aligned}\end{equation*}    (\ref{D0star}) follows. \\
(2)   For $F\in \Gamma_c \left(\mathbb R^{kn},\mathcal V_1\right),$   we have
{\small\begin{equation*}\begin{aligned}
\left(F,\mathcal D_1^* h\right)=&\left(\mathcal D_1F,h\right)=\int_{\mathbb R^{kn}}\left\langle \mathcal D_1F,h\right\rangle_{\mathcal V_2}\hbox{d}V\\=&\frac12\sum_{A,B,C=0}^{k-1 } \int_{\mathbb R^{kn}} \left\langle \nabla_{A}\nabla_{B} F_{C}+\nabla_{A}\nabla_{C} F_{B}-\Delta_{BC}F_{A},h_{ABC}\right\rangle_{ {\mathbb S^-}}\hbox{d}V
\\=&\frac12\sum_{A,B,C=0}^{k-1 }\int_{\mathbb R^{kn}} \bigg( \left\langle F_{C},\nabla_{B}\nabla_{A}h_{ABC}\right\rangle_{ {\mathbb S^-}}
+  \left\langle F_{B},\nabla_{C}\nabla_{A}h_{ABC}\right\rangle_{ {\mathbb S^-}}   - \left\langle F_{A},\Delta_{BC}h_{ABC}\right\rangle_{ {\mathbb S^-}} \bigg)\hbox{d}V
\\=&\sum_{C=0}^{k-1} \int_{\mathbb R^{kn}}\left\langle F_C,\sum_{A,B=0}^{k-1}\left(\nabla_B\nabla_A  h_{A(BC)}  -\frac12\Delta_{AB} h_{CAB}\right)\right\rangle_{\mathbb S^-}\hbox{d}V,
\end{aligned}\end{equation*}}by using (\ref{dF}), (\ref{nablastar}) and relabeling  indices. So we get (\ref{D1star}).
\end{proof}
\begin{cor}\label{p23}
For any $f\in \Gamma_c \left(\mathbb R^{kn},\mathcal V_0\right),$ we have
$
\mathcal D_0^*\mathcal D_0f=\Delta f,
$
where $\Delta=-\sum_{A=0}^{k-1}\sum_{j=1}^n\partial_{Aj}^2$ is the Laplacian operator on $\mathbb R^{kn}.$
\end{cor}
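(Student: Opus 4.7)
The plan is a direct two-step calculation. First I would substitute $G=\mathcal{D}_0 f$ (so $G_A=\nabla_A f$) into the formula \eqref{D0star} for the formal adjoint, obtaining
\[
\mathcal{D}_0^*\mathcal{D}_0 f \;=\; \sum_{A=0}^{k-1}\nabla_A^2 f.
\]
Then I would reduce each $\nabla_A^2$ to a scalar second-order operator using the Clifford relation \eqref{comm}.

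The key observation for the second step is that this computation has already essentially been carried out in the proof of Theorem \ref{pell}(1), where it is shown that
\[
\Delta_{AB} \;=\; \nabla_A\nabla_B+\nabla_B\nabla_A \;=\; -2\sum_{j=1}^n \partial_{Aj}\partial_{Bj}\,\mathrm{Id}_{\mathbb{S}^\pm}.
\]
Taking $A=B$ gives $2\nabla_A^2 = \Delta_{AA} = -2\sum_{j=1}^n \partial_{Aj}^2$, i.e.\ $\nabla_A^2=-\sum_j \partial_{Aj}^2$. Summing over $A$ yields
\[
\mathcal{D}_0^*\mathcal{D}_0 f \;=\; -\sum_{A=0}^{k-1}\sum_{j=1}^n \partial_{Aj}^2 f \;=\; \Delta f,
\]
as claimed. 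Alternatively, one can argue from scratch: since $\partial_{Aj}\partial_{Ak}$ is symmetric in $j,k$, symmetrizing the product $\gamma_j\gamma_k$ against it and invoking \eqref{comm} directly produces the same identity without passing through $\Delta_{AA}$.

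There is no real obstacle here; the statement is essentially the Clifford-algebra identity $(\sum_j \gamma_j \partial_{Aj})^2 = -\sum_j \partial_{Aj}^2$ summed over the $k$ copies, combined with the already-established self-adjointness $\nabla_A^*=\nabla_A$ used to derive \eqref{D0star}. The only minor point to be careful about is keeping track of the sign convention: the Laplacian in the statement is the positive operator $-\sum \partial_{Aj}^2$, which matches the sign produced by the Clifford relation $\gamma_j\gamma_k+\gamma_k\gamma_j=-2\delta_{jk}$.
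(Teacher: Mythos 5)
Your proposal is correct and follows essentially the same route as the paper: apply the formula \eqref{D0star} to $G=\mathcal D_0f$ to get $\sum_A\nabla_A\nabla_Af$, then collapse $\nabla_A^2$ to $-\sum_j\partial_{Aj}^2$ via the Clifford relation \eqref{comm}. The paper writes out the double sum $\sum_{j,k}\gamma_j\partial_{Aj}\gamma_k\partial_{Ak}$ directly rather than quoting $\Delta_{AA}$, but this is the same computation.
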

\begin{proof}
By (\ref{D0star}), we have
\begin{equation}\begin{aligned}\label{dsd}
\mathcal D_0^*\mathcal D_0f=\sum_{A=0}^{k-1}\nabla_A\nabla_Af=\sum_{A=0}^{k-1} \sum_{j,k=1}^{n} \gamma_j{\partial_{Aj}}\gamma_k{\partial_{Ak}}f=\Delta f.
\end{aligned}\end{equation}
The corollary  is proved.
\end{proof}
\begin{cor}\label{p34}
A  monogenic  function $f\in\mathcal O(\Omega)$ on a domain $\Omega\in\mathbb R^{kn}$ is real analytic.
\end{cor}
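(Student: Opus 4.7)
The plan is to reduce real analyticity of monogenic functions to the real analyticity of harmonic functions via the scalar identity established in the preceding corollary. The key observation is that once we apply $\mathcal{D}_0^*$ to the monogenic equation, the resulting scalar operator is just the ordinary Laplacian on $\mathbb{R}^{kn}$, and classical elliptic regularity theory takes over from there.

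First, suppose $f \in \mathcal{O}(\Omega)$, i.e.\ $\mathcal{D}_0 f = 0$ on $\Omega$. A priori we only know $f$ is differentiable enough to make sense of $\mathcal{D}_0 f$; if needed, one can interpret the equation distributionally on an arbitrary relatively compact subdomain. Applying the formal adjoint gives $\mathcal{D}_0^* \mathcal{D}_0 f = 0$ on $\Omega$. By Corollary~\ref{p23}, this identity is precisely $\Delta f = 0$, so $f$ is harmonic (component-wise, as an $\mathbb{S}^+$-valued function) on $\Omega$.

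Since $\Delta$ is an elliptic operator with real analytic (in fact constant) coefficients, every distributional solution of $\Delta f = 0$ is real analytic. This is a classical consequence of elliptic regularity, equivalently of Weyl's lemma combined with the mean value property (or of the explicit Poisson kernel representation on balls). Thus $f$ is real analytic on $\Omega$, which is the desired conclusion.

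There is essentially no obstacle here: the entire content of the proof is to observe that the composition $\mathcal{D}_0^* \mathcal{D}_0$ coincides with the scalar Laplacian, which was already verified in (\ref{dsd}). The only point requiring a sentence of care is the regularity hypothesis on $f$, which is handled by elliptic regularity for $\Delta$ applied to distributional solutions of the overdetermined system $\mathcal{D}_0 f = 0$.
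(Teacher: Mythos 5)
Your argument is correct and coincides with the paper's own proof: both apply $\mathcal D_0^*$ to $\mathcal D_0 f=0$, invoke Corollary~\ref{p23} (identity (\ref{dsd})) to identify $\mathcal D_0^*\mathcal D_0$ with the scalar Laplacian, and conclude real analyticity from harmonicity. No further comment is needed.
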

\begin{proof}
By (\ref{dsd}), we have $\mathcal D_0^*\mathcal D_0f=\Delta f=0,$
in the sense of distributions. Thus $f$ is harmonic and so it is  real analytic at each point of  $\Omega.$
\end{proof}
Without loss of generality, we can assume the defining function of the domain can be written locally as
 \begin{align}\label{defin}
\phi(\mathbf x)=x_{01}-\rho\left(x_{02},\dots x_{0n},\dots,x_{(k-1)n}\right),
 \end{align} with $\rho\left(0,\dots 0\right)=0$, for $\mathbf x\in \mathbb R^{kn}.$ Then
\begin{align*}
 Z_A=\nabla_A-\nabla_A\phi\left(\nabla_0\phi\right)^{-1}\nabla_0, \quad A=1,\cdots,k-1.
\end{align*}
are  tangential vector fieles, because $Z_A\phi=0,$ and  $$T=\left(\nabla_0\phi\right)^{-1} \nabla_0-\partial_{01},$$ is also tangential.
Then  \begin{equation}\begin{aligned}\label{nablaZ}
\nabla_0=&\nabla_0\phi \cdot T+\nabla_0\phi\cdot\partial_{01},\\ \nabla_A=& Z_A +\nabla_A\phi \cdot T+\nabla_A\phi\cdot\partial_{01}=\mathfrak Z_A+\nabla_A\phi\cdot\partial_{01},\quad  A=1,\dots,k-1,
\end{aligned}\end{equation}
where $$\mathfrak Z_A:=Z_A +\nabla_A\phi \cdot T$$ is the tangential part of $\nabla_A,$ i.e. it is a vector field tangential to hypersurfaces $\{\phi=t\}.$

We need the following coaera formula.
\begin{prop}{\rm(\cite[Theorem 1.2.4]{Mazja})}
For a measurable nonnegative function $u$ on an
open subset $\widetilde{\Omega}$ of $\mathbb R^{N}$ and $f\in C^1_c(\widetilde{\Omega}),$  we have
\begin{equation*}
\int_{\widetilde{\Omega}} u(x)|{\rm grad} f(x)| {\rm d}V(x)=\int_{0}^\infty{\rm d}t\int_{\widetilde{\Omega}\cap\{|f|=t\}}u(x) {\rm d}S(x),
\end{equation*}where ${\rm d} S$ is the $(N-1)$-dimension Hausdorff measure, which equals to the surface measure if the surface is smooth.
\end{prop}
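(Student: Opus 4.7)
The plan is to prove this by reducing to the classical smooth coarea formula and then applying an elementary change of variables tied to Sard's theorem. First, by a standard monotone-class argument, I would approximate the measurable nonnegative $u$ from below by an increasing sequence of simple functions $u_m=\sum_{i}c_i^{(m)}\mathbf{1}_{E_i^{(m)}}$ with $E_i^{(m)}$ Borel subsets of $\widetilde{\Omega}$. Monotone convergence applies to both sides (for the right-hand side one uses that $t\mapsto\mathcal{H}^{N-1}(E\cap\{|f|=t\})$ is Borel measurable, a fact that itself can be obtained by first proving the identity for continuous $u$ and passing to limits). Hence it suffices to treat the case $u=\mathbf{1}_E$, and then by inner regularity of Lebesgue measure and a further partition-of-unity argument it suffices to prove
\begin{equation*}
\int_{\widetilde{\Omega}} |\mathrm{grad}\,f(x)|\, {\rm d}V(x)=\int_0^\infty\mathcal{H}^{N-1}\bigl(\{|f|=t\}\bigr)\, {\rm d}t
\end{equation*}
for $f\in C^1_c(\widetilde{\Omega})$, from which the general localized statement follows.

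Next I would split $\widetilde{\Omega}$ into the critical set $C=\{|\mathrm{grad}\,f|=0\}$ and its complement. On $C$ the left-hand integrand vanishes, so that part contributes $0$. By Sard's theorem applied to $f\in C^1_c$, the set $f(C)\subset\mathbb{R}$ has Lebesgue measure zero, so $|f|(C)$ has measure zero as well; hence the corresponding $t$-values may be discarded on the right-hand side. On the open set $\widetilde{\Omega}\setminus C$, at each point the implicit function theorem yields local $C^1$ coordinates $(y_1,\ldots,y_{N-1},s)$ in which $f=s$ and the Jacobian of the change of variables is $|\mathrm{grad}\,f|^{-1}$ (via the formula for the Gram determinant of the level-set tangent space). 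Using a locally finite partition of unity subordinate to such coordinate patches, Fubini gives
\begin{equation*}
\int_{\widetilde{\Omega}\setminus C}|\mathrm{grad}\,f(x)|\, {\rm d}V(x)=\int_{-\infty}^\infty{\rm d}s\int_{\{f=s\}}{\rm d}\mathcal{H}^{N-1},
\end{equation*}
which is the classical coarea identity for the signed function $f$.

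Finally I would fold the $s$-integral to an integral over $t\in(0,\infty)$. For each $t>0$, the fiber $\{|f|=t\}$ is the disjoint union $\{f=t\}\sqcup\{f=-t\}$, and $\{f=0\}$ is a null set for the left-hand side because the contribution of the $s$-integral at a single value is zero. Hence
\begin{equation*}
\int_{-\infty}^\infty{\rm d}s\int_{\{f=s\}}{\rm d}\mathcal{H}^{N-1} =\int_0^\infty{\rm d}t\,\mathcal{H}^{N-1}\bigl(\{|f|=t\}\bigr),
\end{equation*}
which yields the asserted identity for $u\equiv 1$ and then, by the reduction of the first paragraph, for general measurable nonnegative $u$.

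The main obstacle is the local coordinate change near regular values of $f$: one must verify carefully that the Jacobian factor is exactly $|\mathrm{grad}\,f|$ and that the resulting surface integrals agree with the $(N-1)$-dimensional Hausdorff measure on the level set. Everything else—the reduction via monotone convergence, the negligibility of critical values by Sard, and the folding $s\to|s|$—is routine. Since the statement is quoted verbatim from Maz'ya's monograph, in practice I would simply cite \cite{Mazja} for this technical step rather than reproduce the change-of-variables argument in full.
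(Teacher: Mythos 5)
This proposition is quoted in the paper verbatim from Maz'ya with no proof given (the citation \cite{Mazja} is the ``proof''), so your closing remark that one would simply cite the monograph is exactly what the paper does; the question is only whether your sketch of an independent proof is sound. It has one genuine gap: the appeal to Sard's theorem for $f\in C^1_c(\widetilde{\Omega})$. Sard's theorem for maps $\mathbb{R}^N\to\mathbb{R}$ requires $C^N$ regularity, and it genuinely fails for $C^1$ functions when $N\geq 2$: Whitney's classical example is a $C^1$ function on $\mathbb{R}^2$ that is non-constant along a connected arc of critical points, so its set of critical values contains an interval. Hence your step ``$f(C)$ has Lebesgue measure zero, so the corresponding $t$-values may be discarded on the right-hand side'' is unjustified, and this is precisely the delicate part of the coarea formula. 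The correct treatment (as in Maz'ya, Federer, or Evans--Gariepy) does not discard critical \emph{values}; instead it shows directly that $\mathcal{H}^{N-1}\bigl(C\cap\{|f|=t\}\bigr)=0$ for a.e.\ $t$, e.g.\ via the Eilenberg-type coarea inequality applied to the sets $\{|\mathrm{grad}\,f|\leq\varepsilon\}$, where the local Lipschitz constant of $f$ is small, followed by letting $\varepsilon\to 0$. Without some such argument the critical set's contribution to the right-hand side is uncontrolled and the proof does not close.

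Two smaller points are patchable but glossed: the reduction from $u=\mathbf{1}_E$ with $E$ an arbitrary Borel set to the case $u\equiv 1$ is not really a partition-of-unity matter --- the localized identity for Borel $E$ is the full strength of the coarea formula as an identity of measures, and the clean route is to verify it for open sets and then invoke regularity of the two finite Borel measures $E\mapsto\int_E|\mathrm{grad}\,f|\,{\rm d}V$ and $E\mapsto\int_0^\infty\mathcal{H}^{N-1}(E\cap\{|f|=t\})\,{\rm d}t$; and the measurability in $t$ of $\mathcal{H}^{N-1}(E\cap\{|f|=t\})$, which you defer, also needs an argument. The regular-set part of your sketch (implicit function theorem, Jacobian $|\mathrm{grad}\,f|$, area formula identifying the parametrized surface measure with $\mathcal{H}^{N-1}$, Fubini, and the folding $s\mapsto|s|$) is fine for $C^1$ functions. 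Given that the paper treats this as a quoted textbook fact, citing \cite{Mazja} is the appropriate course; if you do want a self-contained proof, replace the Sard step as indicated.
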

Denote $\mathfrak Z_A^*$ be the formal adjoint of $\mathfrak Z_A$ on hypersurface $\{\phi={\rm constant}\}$ with respect to the measure $\frac{{\rm d}S}{|{\rm grad}\phi|},$ i.e.
\begin{equation*}\begin{aligned}
\int_{\{\phi=t\}}\mathfrak Z_Au\cdot v\frac{{\rm d}S}{|{\rm grad}\phi|}=\int_{\{\phi=t\}}u\cdot \mathfrak Z_A^*v\frac{{\rm d}S}{|{\rm grad}\phi|},
\end{aligned}\end{equation*}
for $u,v\in \Gamma_c(\mathbb R^{kn}).$\begin{prop} $\mathfrak Z_A^*=\mathfrak Z_A,$ i.e. the formal adjoint $\nabla_A^*$ of $\nabla_A$ can be written as
\begin{align}\label{nablaA}
\nabla_A^*=\mathfrak Z^*_A+\nabla_A\phi\cdot \partial_{01},\ A=0,\dots,k-1.
\end{align}
\end{prop}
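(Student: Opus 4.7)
The identity (\ref{nablaA}) is just the formal adjoint of the decomposition $\nabla_A=\mathfrak Z_A+\nabla_A\phi\cdot\partial_{01}$ from (\ref{nablaZ}), so in view of $\nabla_A^*=\nabla_A$ proved in (\ref{nablastar}), it suffices to show the tangential self-adjointness $\mathfrak Z_A^*=\mathfrak Z_A$ on each level hypersurface. My plan is to first establish this as a self-adjointness identity on $L^2(\mathbb R^{kn},\hbox{d}V)$, and then transfer it to the level sets $\{\phi=t\}$ by the coarea formula.

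For the $L^2$ step the key observation is that the defining function $\phi(\mathbf x)=x_{01}-\rho(x_{02},\ldots,x_{(k-1)n})$ has no $x_{01}$-dependence outside its linear term, so every component $\partial_{Aj}\phi$ is independent of $x_{01}$, and hence $\partial_{01}$ commutes with the matrix-valued multiplier $\nabla_A\phi=\sum_j\gamma_j\partial_{Aj}\phi$. Using $(T_1T_2)^*=T_2^*T_1^*$, the integration-by-parts identity $\partial_{01}^*=-\partial_{01}$, and the pointwise matrix adjoint $(\nabla_A\phi)^*=-\nabla_A\phi$ (which follows from $\gamma_j^*=-\gamma_j$ of Lemma \ref{gamma}), a short calculation gives
\begin{equation*}
(\nabla_A\phi\cdot\partial_{01})^*=(-\partial_{01})\circ(-\nabla_A\phi)=\nabla_A\phi\cdot\partial_{01},
\end{equation*}
since the two signs cancel and the Leibniz commutator $\partial_{01}(\nabla_A\phi)$ vanishes. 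Subtracting this identity from $\nabla_A^*=\nabla_A$ in light of (\ref{nablaZ}) yields $\int_{\mathbb R^{kn}}\langle\mathfrak Z_Au,v\rangle_{\mathbb S^\pm}\hbox{d}V=\int_{\mathbb R^{kn}}\langle u,\mathfrak Z_Av\rangle_{\mathbb S^\pm}\hbox{d}V$ for all $u,v\in\Gamma_c(\mathbb R^{kn},\mathbb S^\pm)$.

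To descend to a single hypersurface, I would apply the coarea formula with $f=\phi$ to rewrite each bulk integral as $\int\hbox{d}t\int_{\{\phi=t\}}(\cdot)\,\hbox{d}S/|{\rm grad}\,\phi|$, and then localize near a fixed level $t_0$ by taking test sections of the form $u=u_0\chi_\varepsilon(\phi-t_0)$ and $v=v_0\chi_\varepsilon(\phi-t_0)$, where $\chi_\varepsilon$ is a smooth bump of width $\varepsilon$ and $u_0,v_0$ are arbitrary compactly supported smooth sections. Because $\mathfrak Z_A\phi=0$, the cutoff $\chi_\varepsilon(\phi-t_0)$ passes through $\mathfrak Z_A$, so letting $\varepsilon\to0$ collapses the integrated identity to a pointwise equality on $\{\phi=t_0\}$, yielding $\mathfrak Z_A^*=\mathfrak Z_A$ there. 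The main obstacle I anticipate is the sign bookkeeping in the computation of $(\nabla_A\phi\cdot\partial_{01})^*$, where the specific form (\ref{defin}) of $\phi$ is needed to kill the Leibniz term; once this is in hand, the coarea formula and the localization argument are essentially routine.
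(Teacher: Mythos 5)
Your proposal is correct and follows essentially the same route as the paper: both arguments rest on $\nabla_A^*=\nabla_A$, the identity $(\nabla_A\phi\cdot\partial_{01})^*=\partial_{01}\circ(\nabla_A\phi\cdot)=\nabla_A\phi\cdot\partial_{01}$ coming from $(\nabla_A\phi)^*=-\nabla_A\phi$ and $\partial_{01}\nabla_A\phi=0$, and the coarea formula to pass between the bulk measure ${\rm d}V$ and the slice measures ${\rm d}S/|{\rm grad}\,\phi|$. The only difference is presentational: you first prove bulk self-adjointness of $\mathfrak Z_A$ and then localize to a level set with a cutoff in $\phi$, whereas the paper uses the coarea formula at the outset to identify the slice adjoint with the bulk adjoint and then compares with $\nabla_A^*=\nabla_A$; your explicit $\chi_\varepsilon$-localization is a valid (if slightly more elaborate) substitute for the paper's implicit density argument.
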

\begin{proof}
 It follows from the coarea formula that  for $u,v\in \Gamma_c (\widetilde{\Omega} ),$
  \begin{equation}\begin{aligned}\label{Zstar}
\int_{\widetilde{\Omega}}  \mathfrak Z_Au\cdot v {\rm d}V=&\int_0^\infty{\rm d}t\int_{\widetilde{\Omega}\cap\{\phi=t\}}\mathfrak Z_Au\cdot v\frac{{\rm d}S}{|{\rm grad}\phi|}\\=&\int_{0}^\infty{\rm d}t\int_{\widetilde{\Omega}\cap\{\phi=t\}}u\cdot \mathfrak Z_A^*v\frac{{\rm d}S}{|{\rm grad}\phi|}=\int_{\widetilde{\Omega}}   u\cdot \mathfrak Z_A^*v {\rm d}V.
\end{aligned}\end{equation}
Here we can modify $\phi$ outside of support of $u$ and $v$ so that it is compactly supported.
Then for $f\in \Gamma_c\left(\mathbb R^{kn},  \mathbb S^+\right)$ and $h=\left(h_A\right)\in \Gamma_c\left(\mathbb R^{kn},\mathcal V_1\right),$ since $\mathfrak Z_A$ is a scalar operator and  \begin{align}\label{nablaphi}\left(\nabla_A\phi\right)^* =-\nabla_A\phi\end{align} by $\gamma_j^*=-\gamma_j$, we have
\begin{equation*}
\begin{aligned}
\int_{\mathbb R^{kn}}\left\langle\nabla_Af,{h_A}\right\rangle_{\mathbb S^-}\hbox{d}V =&\int_{\mathbb R^{kn}}\left\langle\mathfrak Z_Af,{h_A}\right\rangle_{\mathbb S^-}\hbox{d}V +\int_{\mathbb R^{kn}}\left\langle\mathbf \nabla_A\phi\cdot\partial_{01}f,{h_A}\right\rangle_{\mathbb S^-}\hbox{d}V\\=&\int_{\mathbb R^{kn}}\left\langle  f,\mathfrak Z_A^*{h_A}\right\rangle_{\mathbb S^+}\hbox{d}V +\int_{\mathbb R^{kn}}\left\langle f,\partial_{01}\left(\nabla_A\phi{h_A}\right) \right\rangle_{\mathbb S^-}\hbox{d}V,
\end{aligned}\end{equation*}
by (\ref{nablaZ}) and  (\ref{Zstar}), for  $A=0,\dots,k-1.$ Since we have known $\nabla^*_A=\nabla_A$, and \begin{align}\label{partial01}\partial_{01}\left(\nabla_A\phi h_A\right)=\nabla_A\phi\partial_{01} h_A,\end{align} by $\partial_{01}\nabla_A\phi=\nabla_A\partial_{01}\phi=\nabla_A1=0,$  we get $\mathfrak Z_A^*=\mathfrak Z_A.$  The proposition is proved.
\end{proof}
 We need the following formula of integration by part for $\nabla_A.$
\begin{prop}Suppose that $\Omega$ is a bounded domain in $\mathbb R^{kn}.$ For any $f\in C^1\left(\overline \Omega,\mathbb S^\pm\right)$ and $h\in C^1\left(\overline \Omega,\mathbb S^\mp\right),$ we have
\begin{equation}\begin{aligned}\label{bp}
\int_{\Omega}\left\langle\nabla_Af,h\right\rangle_{\mathbb S^\mp}{\rm d}V=&\int_\Omega\left\langle f, \nabla_A h\right\rangle_{\mathbb S^\pm}{\rm d}V -\int_{\partial \Omega}\left\langle \nabla_A\phi\cdot f, h \right\rangle_{\mathbb S^\mp}\frac{{\rm d}S}{|{\rm grad}\phi|}.
\end{aligned}\end{equation}
\end{prop}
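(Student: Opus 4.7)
The proof is essentially Green's/divergence theorem adapted to the Dirac operator through the Clifford antisymmetry $\gamma_j^*=-\gamma_j$ of Lemma \ref{gamma}. The plan is to expand $\nabla_A f=\sum_{j}\gamma_j\partial_{Aj}f$, move $\gamma_j$ across the spinor inner product using $\gamma_j^*=-\gamma_j$, invoke the Leibniz rule on $\partial_{Aj}$, and then integrate by parts in each coordinate $x_{Aj}$.

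Concretely, I would rewrite
\begin{equation*}
\langle\nabla_A f,h\rangle_{\mathbb S^\mp}
=\sum_{j=1}^n\langle\gamma_j\partial_{Aj}f,h\rangle_{\mathbb S^\mp}
=-\sum_{j=1}^n\langle\partial_{Aj}f,\gamma_j h\rangle_{\mathbb S^\pm},
\end{equation*}
apply $\langle\partial_{Aj}f,\gamma_j h\rangle=\partial_{Aj}\langle f,\gamma_j h\rangle-\langle f,\gamma_j\partial_{Aj}h\rangle$, and sum on $j$. The second piece assembles into $\langle f,\nabla_A h\rangle_{\mathbb S^\pm}$, so that after integrating over $\Omega$ only the terms $\partial_{Aj}\langle f,\gamma_j h\rangle$ yield a genuine boundary contribution. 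Invoking $\int_\Omega \partial_{Aj}u\,{\rm d}V=\int_{\partial\Omega}u\,\nu_{Aj}\,{\rm d}S$ for the scalar field $u=\langle f,\gamma_j h\rangle$ and summing in $j$ gives
\begin{equation*}
\sum_{j=1}^n\int_{\partial\Omega}\langle f,\gamma_j h\rangle\nu_{Aj}\,{\rm d}S
=\int_{\partial\Omega}\Bigl\langle f,\Bigl(\sum_{j}\gamma_j\nu_{Aj}\Bigr)h\Bigr\rangle {\rm d}S.
\end{equation*}
Since the outward unit normal to $\partial\Omega$ is $\nu=-\mathrm{grad}\,\phi/|\mathrm{grad}\,\phi|$ under the orientation convention that the interior of $\Omega$ is $\{\phi>0\}$, we have $\sum_j\gamma_j\nu_{Aj}=-\nabla_A\phi/|\mathrm{grad}\,\phi|$; then moving $\nabla_A\phi$ onto $f$ via (\ref{nablaphi}) produces a second minus sign, yielding exactly $-\int_{\partial\Omega}\langle\nabla_A\phi\cdot f,h\rangle\,{\rm d}S/|\mathrm{grad}\,\phi|$ as in (\ref{bp}).

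An equivalent route, more in line with the decomposition developed just above the statement, is to use (\ref{nablaZ}) to split $\nabla_A=\mathfrak Z_A+\nabla_A\phi\cdot\partial_{01}$: the tangential part contributes no boundary term by the coarea/self-adjointness identity (\ref{Zstar}) for $\mathfrak Z_A^*=\mathfrak Z_A$ (all integrations occur along level sets $\{\phi=t\}$ that close up within $\overline\Omega$), while the remaining scalar normal derivative $\nabla_A\phi\cdot\partial_{01}$ is handled by the one-dimensional fundamental theorem of calculus in $x_{01}$; combining these and using $(\nabla_A\phi)^*=-\nabla_A\phi$ again recovers (\ref{bp}). The argument is entirely elementary; the only point requiring attention is careful bookkeeping of the two minus signs coming from $\gamma_j^*=-\gamma_j$ and from the orientation of $\partial\Omega$ relative to the defining function $\phi$.
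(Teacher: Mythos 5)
Your proposal is correct, and in fact contains the paper's proof as your ``equivalent route'': the paper proves (\ref{bp}) exactly by splitting $\nabla_A=\mathfrak Z_A+\nabla_A\phi\cdot\partial_{01}$ via (\ref{nablaZ}), using the coarea-formula self-adjointness $\mathfrak Z_A^*=\mathfrak Z_A$ from (\ref{Zstar}) so that the tangential part produces no boundary term, integrating the $\partial_{01}$ term by parts in $x_{01}$, and then converting $\left\langle f,\nabla_A\phi\cdot h\right\rangle$ into $-\left\langle\nabla_A\phi\cdot f,h\right\rangle$ by (\ref{nablaphi}). Your primary route --- expanding $\nabla_A=\sum_j\gamma_j\partial_{Aj}$, moving each $\gamma_j$ across the inner product with $\gamma_j^*=-\gamma_j$, applying Leibniz, and invoking the divergence theorem for the scalar fields $\left\langle f,\gamma_j h\right\rangle$ --- is a genuinely different and arguably cleaner derivation: it works directly on all of $\partial\Omega$ with the outward normal $\nu=-\mathrm{grad}\,\phi/|\mathrm{grad}\,\phi|$ and does not depend on the local graph form (\ref{defin}) of $\phi$, whereas the paper's argument is tied to the coordinates in which $\phi=x_{01}-\rho$ and implicitly requires localization. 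The sign bookkeeping in your first route does close up: there are three minus signs in the boundary term (one from $\gamma_j^*=-\gamma_j$ in the interior Leibniz step, one from $\nu=-\mathrm{grad}\,\phi/|\mathrm{grad}\,\phi|$, and one from $(\nabla_A\phi)^*=-\nabla_A\phi$), and their product gives the single minus sign in (\ref{bp}); your prose mentions only two of these explicitly, so when writing it out you should display the leading minus on $\sum_j\int_{\partial\Omega}\left\langle f,\gamma_jh\right\rangle\nu_{Aj}\,{\rm d}S$ coming from the first step.
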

\begin{proof}Recall that  $\mathbf n=-\frac{{\rm grad}  \phi}{|{\rm grad} \phi|}$ is the unit vector outer normal to $\partial \Omega$ with ${\rm grad}\ \phi(\mathbf x)=\left(1,-\rho_{x_{02}},-\rho_{x_{03}}, \dots\right).$ It is easy to see that
\begin{equation*}\begin{aligned}
\int_\Omega\left\langle\nabla_Af,h\right\rangle_{\mathbb S^\mp}{\rm d}V=&\int_\Omega\left\langle \mathfrak Z_A f, h\right\rangle_{\mathbb S^\mp}{\rm d}V+\int_\Omega\left\langle\nabla_A\phi\cdot\partial_{01}f, h\right\rangle_{\mathbb S^\mp}{\rm d}V\\=&\int_\Omega\left\langle f, \mathfrak Z_A^* h\right\rangle_{\mathbb S^\pm}{\rm d}V+\int_\Omega\left\langle f, \partial_{01}\left(\nabla_A\phi\cdot h\right)\right\rangle_{\mathbb S^\pm}{\rm d}V +\int_{\partial \Omega}\left\langle f, \nabla_A\phi\cdot h \right\rangle_{\mathbb S^\pm}\frac{{\rm d}S}{|{\rm grad}\phi|}\\=&\int_\Omega\left\langle f, \nabla_A^* h\right\rangle_{\mathbb S^\pm}{\rm d}V-\int_{\partial \Omega}\left\langle\nabla_A\phi\cdot f, h \right\rangle_{\mathbb S^\pm}\frac{{\rm d}S}{|{\rm grad}\phi|},
\end{aligned}\end{equation*}by (\ref{Zstar})-(\ref{partial01}).
The proposition is proved.
\end{proof}

\subsection{The characterization of the sheafs $\mathcal J_j(U)$'s }
\begin{lem}\label{exp}
 $H \in \Gamma(U,\mathcal V_1)$ satisfies \begin{align}\label{JJJ1} \sum_{(B,C)}\nabla_{[A}\phi\nabla_{\underline{B}}\phi H_{C]}=0,\quad {\rm on}\ \partial\Omega\cap U,\end{align} for any $A,B,C\in\{0,\dots,k-1\},$ if and only if there exists some  $H^{(1)}\in \Gamma(U,\mathbb S^+) $ such that
\begin{align}\label{Hc}
H_C=\nabla_C\phi \cdot H^{(1)}+ 
O\left(\phi\right).
\end{align} \end{lem}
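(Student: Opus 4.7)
The plan is to prove the two implications separately, with the key analytic input being that Clifford multiplication by $\nabla_0\phi$ is pointwise invertible on $U$. Write $X_A := \nabla_A\phi$ for the Clifford multiplication operator (viewed as acting between spinor spaces). The Clifford relations \eqref{comm} give
\begin{equation*}
X_A X_B + X_B X_A = -2\langle\nabla_A\phi,\nabla_B\phi\rangle\cdot {\rm Id},
\end{equation*}
and in particular $X_0^2 = -|\nabla_0\phi|^2\cdot {\rm Id}$. Because the defining function has the normal form \eqref{defin} with $\partial_{01}\phi \equiv 1$, we have $|\nabla_0\phi|^2 \geq 1$ throughout $U$, so $X_0$ is pointwise invertible.

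For the ``if'' direction, I would substitute $H_C = X_C H^{(1)} + O(\phi)$ into the left-hand side of \eqref{JJJ1} and restrict to $\partial\Omega$, where the $O(\phi)$ term drops out. The claim reduces to the purely algebraic operator identity
\begin{equation*}
\sum_{(B,C)} X_{[A} X_{\underline B} X_{C]} = 0.
\end{equation*}
Expanding the skewsymmetrization in $A, C$ and the permutation sum over $(B, C)$ produces four terms, which I would group as $\frac{1}{2}X_A(X_B X_C + X_C X_B)$ and $-\frac{1}{2}(X_C X_B + X_B X_C)X_A$. By the anticommutator, each equals $\mp\langle\nabla_B\phi,\nabla_C\phi\rangle X_A$, and they cancel.

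For the ``only if'' direction, I would specialize \eqref{JJJ1} to $A = B = 0$. The sum over permutations of $(B, C)$ gives two contributions, and the cross term $X_{[0}X_{\underline C}H_{0]} = \frac{1}{2}(X_0 X_C H_0 - X_0 X_C H_0) = 0$ vanishes identically, so the condition collapses to the single equation
\begin{equation*}
X_0^2\, H_C = X_C X_0 H_0 \qquad \text{on }\partial\Omega\cap U.
\end{equation*}
This motivates defining, on all of $U$,
\begin{equation*}
H^{(1)} := -\frac{1}{|\nabla_0\phi|^2}\,\nabla_0\phi\cdot H_0 \in \Gamma(U,\mathbb S^+),
\end{equation*}
which is smooth since $|\nabla_0\phi|^2 \geq 1$. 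By construction $\nabla_0\phi\cdot H^{(1)} = H_0$ identically on $U$, while the displayed boundary equation rewrites as $H_C = \nabla_C\phi\cdot H^{(1)}$ on $\partial\Omega\cap U$. Hence $H_C - \nabla_C\phi\cdot H^{(1)}$ vanishes on $\partial\Omega\cap U$ and, by Hadamard's lemma, is $O(\phi)$. This gives \eqref{Hc}.

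The main conceptual point is recognizing that the single scalar consequence of \eqref{JJJ1} obtained by specializing at $A = B = 0$ already encodes the desired structure of $H_C$ for every $C$. No further indices need be examined: the ``if'' direction's Clifford cancellation shows that any $H$ of the form \eqref{Hc} automatically satisfies \eqref{JJJ1} for all $(A,B,C)$, so the two conditions are equivalent once this one scalar specialization is understood.
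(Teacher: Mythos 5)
Your proof is correct and follows essentially the same route as the paper: the converse direction rests on the identity $2\sum_{(B,C)}\nabla_{[A}\phi\nabla_{\underline B}\phi\nabla_{C]}\phi=\left[\nabla_A\phi,\{\nabla_B\phi,\nabla_C\phi\}\right]=0$, and the forward direction extracts the single relation $(\nabla_0\phi)^2H_C=\nabla_C\phi\,\nabla_0\phi\,H_0$ on $\partial\Omega$ and inverts the scalar $(\nabla_0\phi)^2$ (your specialization $A=B=0$ yields literally the same equation as the paper's $B=C=0$, after relabeling). Your version is marginally cleaner in defining $H^{(1)}$ globally on $U$ rather than only on $\partial\Omega$, and your sign $X_0^2=-|\nabla_0\phi|^2\,\mathrm{Id}$ is the one actually forced by the Clifford relations.
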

\begin{proof}
Letting $B=C=0$ in   (\ref{JJJ1}),   we get\begin{align*}
 \nabla_A\phi\nabla_0\phi\cdot H_0 = \left(\nabla_0\phi\right)^2\cdot H_A,\quad {\rm on}\ \partial\Omega.
\end{align*}
Note that  \begin{align}\label{id}\left(\nabla_0\phi\right)^2 =\left|\nabla_0\phi\right|^2{\rm Id}_{\mathbb S^\pm}\end{align} commutes $\nabla_A\phi$ and $\left|\nabla_0\phi\right|$ obviously does not vanish by definition (\ref{defin}) of $\phi$. If denote $H^{(1)}=\left.\nabla_0\phi \left(\nabla_0 \phi\right)^{-2}H_0\right|_{\partial \Omega},$   we have
\begin{align*}
 H_A  = \nabla_A\phi\cdot\nabla_0\phi \left(\nabla_0 \phi\right)^{-2}H_0 =\nabla_A\phi\cdot H^{(1)}, \quad {\rm on}\ \partial\Omega,
\end{align*}
for   $A=0,\dots,k-1.$ Thus (\ref{Hc}) follows.

Conversely, $H_C$ given by (\ref{Hc}) must  satisfy (\ref{JJJ1}) since
\begin{equation}\begin{aligned}\label{comm} 2\sum_{(B,C)}\nabla_{[A}\phi \nabla_{\underline{B}}\phi\nabla_{C]}\phi =&\nabla_A\phi\sum_{(B,C)}\nabla_B\phi\nabla_C\phi- \sum_{(B,C)}\nabla_C\phi\nabla_B\phi \nabla_A\phi\\=& \left[\nabla_A\phi,\left\{\nabla_B\phi, \nabla_C\phi\right\}\right]=0. \end{aligned}\end{equation}
This is because
$$\left\{\nabla_B\phi,\nabla_C\phi\right\} =\sum_{i,j=1}^n \left(\gamma_i\gamma_j\partial_{Bi}\phi\partial_{Cj}\phi +\gamma_j\gamma_i\partial_{Cj}\phi\partial_{Bi}\phi\right) =-2\sum_{i=1}^n\partial_{Bi}\phi\partial_{Ci}\phi\cdot{\rm Id}_{\mathbb S^\pm}.$$   The lemma is proved.
\end{proof}

\begin{lem}\label{nana}
For $F\in\Gamma\left(U,\mathbb S^\pm\right),$ we have
{ \begin{equation}\begin{aligned}\label{rhog}
0=&\sum_{(B,C)} \bigg(\nabla_{[A}\nabla_{\underline{B}}\left(\phi\nabla_{C]}\phi \cdot F\right)+\nabla_{[A}\phi\nabla_{\underline{B}}\phi\nabla_{C]} F \bigg) +O(\phi)\\=&\sum_{(B,C)} \bigg(
\nabla_{[A}\left(\nabla_{\underline{B}}\phi\nabla_{C]}\phi \cdot F\right)+\nabla_{[A}\phi\nabla_{\underline{B}}\left(\nabla_{C]}\phi \cdot F\right)+\nabla_{[A}\phi\nabla_{\underline{B}}\phi \nabla_{C]} F\bigg) +O(\phi).
\end{aligned}\end{equation}}
\end{lem}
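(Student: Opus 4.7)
The plan is to deduce both equalities of the lemma from one underlying vanishing identity: for any spinor-valued $G$,
\begin{equation*}
\sum_{(B,C)}\nabla_{[A}\nabla_{\underline{B}}\nabla_{C]}G = 0.
\end{equation*}
This follows at once from the commutativity \eqref{Delta}: one has $\sum_{(B,C)}\nabla_A\nabla_B\nabla_C = \nabla_A\Delta_{BC}$ and $\sum_{(B,C)}\nabla_C\nabla_B\nabla_A = \Delta_{BC}\nabla_A$, which agree after acting on $G$ because $\Delta_{BC}$ commutes past $\nabla_A$; the skew-symmetrization in $(A,C)$ produces exactly their difference.

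I then apply this to $G:=\phi^2 F$ and unwind using the scalar Leibniz rule $\nabla_A(\phi X)=(\nabla_A\phi)X+\phi\,\nabla_A X$, which is valid for spinor-valued $X$ because $\phi$ is a scalar function. Starting from $\nabla_C(\phi^2 F)=2\phi(\nabla_C\phi)F+\phi^2\nabla_C F$ and applying $\nabla_B$ and then $\nabla_A$ in succession, absorbing every surviving factor of $\phi$ into $O(\phi)$, gives
\begin{equation*}
\nabla_A\nabla_B\nabla_C(\phi^2 F) = 2\,\nabla_A\nabla_B\bigl(\phi\,\nabla_C\phi\cdot F\bigr) + 2\,\nabla_A\phi\cdot\nabla_B\phi\cdot\nabla_C F + O(\phi).
\end{equation*}
Skew-symmetrizing in $(A,C)$, summing over $(B,C)$, substituting into the vanishing identity and dividing by $2$ yields the first equality of the statement.

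For the second equality, I apply one further Leibniz expansion to $\nabla_A\nabla_B(\phi\,\nabla_C\phi\cdot F)$. First
\begin{equation*}
\nabla_B\bigl(\phi\,\nabla_C\phi\cdot F\bigr) = \nabla_B\phi\cdot\nabla_C\phi\cdot F + \phi\,\nabla_B(\nabla_C\phi\cdot F),
\end{equation*}
and then applying $\nabla_A$ turns $\nabla_A[\phi\,\nabla_B(\nabla_C\phi\cdot F)]$ into $\nabla_A\phi\cdot\nabla_B(\nabla_C\phi\cdot F)+O(\phi)$. The three surviving contributions match, under $\sum_{(B,C)}[\,\cdots\,]_{[A\underline{B}C]}$, the three terms on the second line of the displayed identity.

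The only substantive input is the vanishing of $\sum_{(B,C)}\nabla_{[A}\nabla_{\underline B}\nabla_{C]}G$ for a single spinor $G$; the rest is systematic bookkeeping. The main point to watch is that the Clifford factors $\nabla_A\phi,\nabla_B\phi,\nabla_C\phi$ do not commute with each other and must not be reordered, whereas the scalar $\phi$ itself commutes with everything — a distinction that is easy to maintain throughout the Leibniz expansions. I do not foresee any genuine obstacle beyond this.
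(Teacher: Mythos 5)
Your proposal is correct and follows essentially the same route as the paper: the key input is the vanishing of $\sum_{(B,C)}\nabla_{[A}\nabla_{\underline{B}}\nabla_{C]}\left(\phi^2F\right)$, which the paper also derives from $\left[\nabla_A,\Delta_{BC}\right]=0$, followed by the same Leibniz expansions modulo $O(\phi)$ and skewsymmetrization in $A$ and $C$. The only difference is presentational --- you obtain the two displayed equalities by expanding in two successive stages, while the paper expands fully in one step --- and your care about not reordering the non-commuting Clifford factors $\nabla_A\phi,\nabla_B\phi,\nabla_C\phi$ matches the paper's computation.
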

\begin{proof}Since $ 2\sum_{(B,C)}\nabla_{[A} \nabla_{\underline{B}} \nabla_{C]}= \left[\nabla_A,\Delta_{BC}\right]=0,$ we have
\begin{equation}\begin{aligned}\label{phi2}
\sum_{(B,C)}\nabla_{[A}\nabla_{\underline{B}}\nabla_{C]} \left(\phi^2F\right)=0.
\end{aligned}\end{equation}
On the other hand, for $F\in\Gamma\left(U,\mathbb S^\pm\right),$
{\small\begin{equation}\begin{aligned}\label{351}
\nabla_A\nabla_B\nabla_C\left(\phi^2F\right) =&\nabla_A\nabla_B\left(2\phi\nabla_C\phi \cdot F\right)+\nabla_A\nabla_B\left(\phi^2\nabla_CF\right)
\\=&2\nabla_A\bigg(\nabla_B \phi\nabla_C\phi\cdot F+\phi\nabla_B\left(\nabla_C\phi \cdot F\right)\bigg)+\nabla_A \bigg(2\phi\nabla_B\phi\nabla_CF+\phi^2\nabla_B\nabla_C F\bigg)
\\=&2\nabla_A\left(\nabla_B\phi\nabla_C\phi \cdot F\right)+2\nabla_A\phi\nabla_B\left(\nabla_C\phi \cdot F\right)+2\nabla_A\phi\nabla_B\phi \nabla_C F +O(\phi).
\end{aligned}\end{equation}}Then   (\ref{rhog}) follows from (\ref{phi2}) and skewsymmetrization of  $A$ and $C$ in (\ref{351}).
\end{proof}

\begin{prop}\label{J13}{\rm (1)}
$\mathcal J_0(U)$ consists of $\mathbb S^+$-valued function $f$ such that $f=O(\phi).$\\
{\rm (2)} $\mathcal J_1(U)$ consists of $f= f_C\omega^C$ with \begin{equation}\begin{aligned}\label{J3'}
f_C=\nabla_C\phi\cdot F+\phi \left(\nabla_C F+\nabla_C\phi \cdot F'\right)+O\left(\phi^2\right)=\nabla_C\left(\phi F+\phi^2\cdot\frac{F'}{2}\right)+O\left(\phi^2\right),
\end{aligned}\end{equation}
 for some  $F,F'\in\Gamma\left(U,\mathbb S^+\right).$
\end{prop}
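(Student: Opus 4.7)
The plan is to analyze the two sheaves by computing the boundary terms produced when integrating by parts with the formulas (\ref{bp}) and (\ref{dF}), and then to match the resulting algebraic conditions on $\partial\Omega$ with the characterization in Lemma \ref{exp}.

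For part (1), I would apply (\ref{bp}) componentwise to $\mathcal D_0$: for $f\in\Gamma(U,\mathbb S^+)$ and any $\psi=(\psi_A)\in\Gamma_c(U,\mathcal V_1)$,
\begin{align*}
\int_\Omega \langle \mathcal D_0 f,\psi\rangle {\rm d}V - \int_\Omega \langle f,\mathcal D_0^*\psi\rangle {\rm d}V = -\sum_{A=0}^{k-1}\int_{\partial\Omega} \langle \nabla_A\phi\cdot f,\psi_A\rangle \frac{{\rm d}S}{|{\rm grad}\phi|}.
\end{align*}
Requiring this to vanish for arbitrary $\psi_A$ forces $\nabla_A\phi\cdot f=0$ on $\partial\Omega$ for every $A$. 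Taking $A=0$ and left-multiplying by $\nabla_0\phi$, which squares to $|\nabla_0\phi|^2\,{\rm Id}$ by (\ref{id}) with $|\nabla_0\phi|$ non-vanishing by (\ref{defin}), yields $f|_{\partial\Omega}=0$, i.e.\ $f=O(\phi)$; the converse is obvious.

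For the sufficient direction of part (2), I would split $f_C$ as $\nabla_C g + r_C$ with $g:=\phi F+\tfrac12\phi^2 F'$ and $r_C=O(\phi^2)$. Part (1) gives $g\in\mathcal J_0(U)$, and for any test $h\in\Gamma_c(U,\mathcal V_2)$, using $\mathcal D_1\circ\mathcal D_0=0$ from Theorem \ref{pell},
\begin{align*}
\int_\Omega \langle\mathcal D_1\mathcal D_0 g,h\rangle {\rm d}V = 0 = \int_\Omega \langle g,\mathcal D_0^*\mathcal D_1^* h\rangle {\rm d}V = \int_\Omega \langle\mathcal D_0 g,\mathcal D_1^* h\rangle {\rm d}V,
\end{align*}
so $\mathcal D_0 g\in\mathcal J_1(U)$. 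For the remainder $r$, I would apply (\ref{bp}) twice to every second-order summand in $\int_\Omega \langle\mathcal D_1 r,h\rangle {\rm d}V$; each boundary contribution carries either $r$ or a single derivative $\nabla_A r$, and both $r=O(\phi^2)$ and $\nabla_A r=O(\phi)$ vanish on $\partial\Omega$, so $r\in\mathcal J_1(U)$ too.

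For the necessary direction I would expand $\int_\Omega \langle\mathcal D_1 f,h\rangle {\rm d}V$ via (\ref{dF}) and integrate by parts twice to produce
\begin{align*}
\int_\Omega \langle \mathcal D_1 f,h\rangle {\rm d}V - \int_\Omega \langle f,\mathcal D_1^* h\rangle {\rm d}V = \int_{\partial\Omega}\bigl[\mathcal A(f,\nabla h)+\mathcal B(\nabla f,h)\bigr]\frac{{\rm d}S}{|{\rm grad}\phi|}.
\end{align*}
Because $h|_{\partial\Omega}$ and its transversal derivative are freely prescribable, vanishing of the full boundary integral decouples into two independent algebraic constraints on $\partial\Omega$. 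The first (from the free normal derivative of $h$) reads $\sum_{(B,C)}\nabla_{[A}\phi\,\nabla_{\underline B}\phi\,f_{C]}=0$ on $\partial\Omega$, and Lemma \ref{exp} delivers $F\in\Gamma(U,\mathbb S^+)$ with $f_C=\nabla_C\phi\cdot F+O(\phi)$. Writing then $f_C=\nabla_C\phi\cdot F+\phi\widetilde f_C+O(\phi^2)$ and using the identity (\ref{rhog}) of Lemma \ref{nana} to eliminate the contribution of the leading term $\nabla_C\phi\cdot F$, the second constraint reduces to $\sum_{(B,C)}\nabla_{[A}\phi\,\nabla_{\underline B}\phi\,(\widetilde f_{C]}-\nabla_{C]}F)=0$ on $\partial\Omega$; a second application of Lemma \ref{exp} then produces $F'$ with $\widetilde f_C=\nabla_C F+\nabla_C\phi\cdot F'+O(\phi)$, which is precisely (\ref{J3'}).

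The main obstacle is this necessary direction: organising the boundary integral that comes from two applications of (\ref{bp}) to each summand of (\ref{dF}), cleanly separating the $h$-coefficient from the $\nabla h$-coefficient, and recognising that Lemma \ref{nana} is exactly the identity required to strip off the leading $\nabla_C\phi\cdot F$ contribution, so that the subleading analysis reduces algebraically to the leading one via another use of Lemma \ref{exp}.
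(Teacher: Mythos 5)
Your proposal is correct and follows essentially the same route as the paper: two integrations by parts produce exactly the two boundary constraints (the coefficient of the free normal derivative of $h$ giving $\sum_{(B,C)}\nabla_{[A}\phi\nabla_{\underline{B}}\phi f_{C]}=0$, i.e.\ the paper's $\mathcal S=0$, and the coefficient of $h$ giving the paper's $\mathcal S'=0$), after which Lemma \ref{exp} is applied twice with the identity of Lemma \ref{nana} used to strip off the leading $\nabla_C\phi\cdot F$ term. The only point worth making explicit is that $h|_{\partial\Omega}$ and $\partial_{01}h|_{\partial\Omega}$ range only over $\mathcal V_2$-valued data, so one must observe (as the paper does via $\mathcal S=\tfrac32 C_{21}(\widetilde S)$) that the boundary coefficients themselves lie in $\mathcal V_2$ before concluding they vanish.
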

\begin{proof} (1) By definition,  $\mathcal J_0(U)$ is the set of  all $f\in\Gamma\left(U,\mathcal V_0\right)$ satisfying
\begin{align*}
 \int_\Omega\left\langle \mathcal D_0f ,G\right\rangle_{\mathbb S^-}{\rm d}V=\int_\Omega\left\langle f,\mathcal D_0^*G \right\rangle_{\mathbb S^+}{\rm d}V,
\end{align*}for any $G\in\Gamma_c\left(U,\mathcal V_1\right)$. We have
\begin{equation*}\begin{aligned}
\left(\mathcal D_0f,G\right)_{\mathcal V_1}=&\sum_{A=0}^{k-1}\int_\Omega\left\langle \nabla_A f,G_A\right\rangle_{\mathbb S^-} \hbox{d}V
\\=&\int_\Omega\left\langle f,\mathcal D_0^*G\right\rangle_{\mathbb S^+} {\rm d}V-\sum_{A=0}^{k-1}\int_{\partial \Omega}\left\langle\nabla_A\phi\cdot  f, G_A\right\rangle_{\mathbb S^-} \frac{{\rm d}S}{|{\rm grad}\phi|},
\end{aligned}\end{equation*}by  using (\ref{D0star}) and integration by part (\ref{bp}). Since $G_A$  are arbitrarily  chosen, $f\in\mathcal J_0(U)$  if and only if
$
\left.\nabla_A\phi\cdot f\right|_{\partial \Omega}=0.
$
Taking $A$ such that $\nabla_A\phi\neq0$ in a neighbourhood, we have  $\left.\left(\nabla_A\phi \right)^2 \cdot f\right|_{\partial \Omega}=0.$  But by (\ref{id}), we see  $\left.f\right|_{\partial \Omega\cap U}=0.$ So $f=O(\phi).$\\
(2) By definition, $\mathcal J_1(U)$ is the set of all  $f\in\Gamma\left(U,\mathcal V_1\right)$ satisfying
\begin{align*}
 \int_\Omega\left\langle \mathcal D_1f,h \right\rangle_{\mathbb S^-}{\rm d}V= \int_\Omega\left\langle f,\mathcal D_1^*h \right\rangle_{\mathbb S^-}{\rm d}V,
\end{align*}for any $h\in\Gamma_c\left(U,\mathcal V_2\right)$. For  $g \in \Gamma_c\left(U,\mathbb S^-\right)$ with compact support, we have
 \begin{equation*}\begin{aligned}
\int_\Omega\left\langle\nabla_A\nabla_Bf_C,g \right\rangle_{\mathbb S^-}{\rm d}V=&\int_\Omega\left\langle \nabla_Bf_C,\nabla^*_Ag\right\rangle_{\mathbb S^+}{\rm d}V-\int_{\partial \Omega}\left\langle\nabla_A\phi\nabla_Bf_C, g\right\rangle_{\mathbb S^+}\frac{{\rm d}S}{|{\rm grad}\phi|}
\\=&\int_\Omega\left\langle f_C,\nabla^*_B\nabla_A^*g\right\rangle_{\mathbb S^-}{\rm d}V-\int_{\partial \Omega}\left\langle\nabla_B\phi f_C, \left(\mathfrak Z_A^*+\nabla_A\phi\cdot \partial_{01}\right) g\right\rangle_{\mathbb S^-}\frac{{\rm d}S}{|{\rm grad}\phi|}\\&\qquad\qquad\qquad\qquad\qquad-\int_{\partial \Omega}\left\langle \nabla_A\phi\nabla_B f_C,g\right\rangle_{\mathbb S^-}\frac{{\rm d}S}{|{\rm grad}\phi|}
\\=&\int_\Omega\left\langle f_C,\nabla_B \nabla_A  g\right\rangle_{\mathbb S^-}{\rm d}V+\int_{\partial \Omega}\left\langle \nabla_A\phi \nabla_B\phi f_C,\partial_{01}g\right\rangle_{\mathbb S^-}\frac{{\rm d}S}{|{\rm grad}\phi|}\\&\qquad\qquad\qquad\qquad\quad-\int_{\partial \Omega}\left\langle \mathfrak Z_A\left(\nabla_B\phi f_C\right)+\nabla_A\phi \nabla_Bf_C,g\right\rangle_{\mathbb S^-}\frac{{\rm d}S}{|{\rm grad}\phi|},
\end{aligned}\end{equation*} by  using (\ref{nablaA}), integration by part  (\ref{bp}) twice and (\ref{nablaphi}). After skewsymmetrization of $A$ and $C$ and symmetrization $B$ and $C$ above, then apply it to $g=h_{ABC}$ such that $\left(h_{ABC}\right)\in\Gamma\left(U,\mathcal V_2\right)$ and take summation over $A,B,C$ to get
{\small\begin{equation*}\begin{aligned}
\left(\mathcal D_1f,h\right)_{\mathcal V_2}=&\int_\Omega\left\langle \nabla_{[A}\nabla_{\underline{B}}f_{C]} +\nabla_{[A}\nabla_{\underline{C}}f_{B]},h_{ABC} \right\rangle_{\mathbb S^-} \hbox{d}V\\=&\int_\Omega\left\langle f,\mathcal D_1^*h\right\rangle_{\mathbb S^-} {\rm d}V+\sum_{A,B,C}\int_{\partial \Omega}\bigg(\left\langle \mathcal S_{ABC},\partial_{01}h_{ABC}\right\rangle_{\mathbb S^-}  - \left\langle  \mathcal S'_{ABC},h_{ABC}\right\rangle_{\mathbb S^-}\bigg) \frac{{\rm d}S}{|{\rm grad}\phi|}\\=&\left(f,\mathcal D_1^*h\right)_{\mathcal V_1}+\int_{\partial \Omega}\bigg(\left\langle \mathcal S,\partial_{01}h\right\rangle_{\mathcal V_2}   - \left\langle {\mathcal S'},h\right\rangle_{\mathcal V_2} \bigg) \frac{{\rm d}S}{|{\rm grad}\phi|},
\end{aligned}\end{equation*}}where $ \mathcal D_1^*h $ is given by (\ref{D1star}), $\mathcal S =(\mathcal S_{ABC})$ and $\widetilde{\mathcal S }=(\widetilde{\mathcal S}_{ABC})$ with
\begin{equation*}\begin{aligned}
\mathcal S_{ABC}:=&\sum_{(B,C)} \left.\bigg(\nabla_{[A}\phi\nabla_{\underline{B}} \phi f_{C]}  \bigg)\right|_{\partial \Omega},\\  \mathcal S'_{ABC}:=&\sum_{(B,C)}\left.\bigg(\mathfrak Z_{[A}\left(\nabla_{\underline{B}}\phi f_{C]}\right)  +\nabla_{[A}\phi \nabla_{\underline{B}}f_{C]}\bigg) \right|_{\partial \Omega},
\end{aligned}\end{equation*} $A,B,C=0,\dots,k-1.$ Here $\mathcal S \in\Gamma\left(\partial\Omega,\mathcal V_2\right)$ since $\mathcal S=\frac32C_{21} (\widetilde S )$ with $\widetilde S_{ABC}=\nabla_A\phi\nabla_B\phi f_C.$ Similarly, $\mathcal S'\in\Gamma\left(\partial\Omega,\mathcal V_2\right).$ Since $h$ and $\partial_{{01}}h$ are arbitrarily chosen $\mathcal V_2$-valued functions, $f\in\mathcal J_1(U)$  if and only if $\mathcal S=0,\mathcal S'=0.$  By Lemma \ref{exp}, $\mathcal S=0$ holds if and only if
\begin{align}\label{f}
f_C=\nabla_C\phi \cdot F+\phi G_C+O\left(\phi^2\right),
\end{align}
for some  $F\in \Gamma(U,\mathbb S^+) $ and $G_C\in \Gamma(U,\mathbb S^-),$ where   $C=0,\dots,k-1.$
 Then substitute  (\ref{f}) into $\mathcal S'=0$ to get
\begin{equation}\begin{aligned}\label{JJJ2'}
0=&\sum_{(B,C)}\bigg\{\mathfrak Z_{[A}\left(\nabla_{\underline{B}}\phi \left(\nabla_{C]}\phi\cdot F\right)\right)      +\nabla_{[A}\phi \nabla_{\underline{B}}\left(\nabla_{C]}\phi\cdot F\right) +\nabla_{[A}\phi \nabla_{\underline{B}}\left(\phi G_{C]} \right)\bigg\}+O(\phi) \\=&:\Sigma_1+\Sigma_2+\Sigma_3+O(\phi),
\end{aligned}\end{equation}by $\mathfrak Z_{A}\phi=0$ for any $A,$
where \begin{equation*}\begin{aligned}
\Sigma_3=&\sum_{(B,C)} \nabla_{[A}\phi \nabla_{\underline{B}}\left(\phi G_{C]} \right)   = \sum_{(B,C)} \nabla_{[A}\phi \nabla_{\underline{B}}\phi\cdot G_{C]}     +O(\phi),\\ \Sigma_1 +\Sigma_2 =&\sum_{(B,C)}\bigg\{\mathfrak Z_{[A}\left(\nabla_{\underline{B}}\phi\cdot \nabla_{C]}\phi\cdot F\right)     +\nabla_{[A}\phi \nabla_{\underline{B}}\left(\nabla_{C]}\phi\cdot F\right)\bigg\} \\=&\sum_{(B,C)}\bigg\{\nabla_{[A}\left(\nabla_{\underline{B}}\phi \cdot \nabla_{C]}\phi\cdot F\right)  -\nabla_{[A}\phi \nabla_{\underline{B}}\phi \nabla_{C]}\phi\cdot\partial_{01} F   +\nabla_{[A}\phi \nabla_{\underline{B}}\left(\nabla_{C]}\phi\cdot F\right)\bigg\}\\=&\sum_{(B,C)}\bigg( \nabla_{[A}\left(\nabla_{\underline{B}}\phi \cdot \nabla_{C]}\phi\cdot F\right)     +\nabla_{[A}\phi \nabla_{\underline{B}}\left(\nabla_{C]}\phi\cdot F \right)\bigg) +O(\phi).
\end{aligned}\end{equation*}Here the second identity holds by (\ref{partial01})  and  the last identity holds by (\ref{comm}).
By the second identity of (\ref{rhog}) in Lemma \ref{nana}, we have
\begin{equation*}\begin{aligned}
0=
\Sigma_1+\Sigma_2+ \sum_{(B,C)}\nabla_{[A}\phi\nabla_{\underline{B}}\phi \nabla_{C]}F+O(\phi).
\end{aligned}\end{equation*}
Then   (\ref{JJJ2'}) is equivalent to
\begin{align}\label{325}
\sum_{(B,C)} \nabla_{[A}\phi\nabla_{\underline{B}}\phi \left(G_{C]}-\nabla_{C]}F\right) +O(\phi)=0.
\end{align}
Then apply Lemma \ref{exp} again to $H_C=G_C-\nabla_CF$ to get
$
G_C-\nabla_CF=\nabla_C \phi \cdot F' +O\left(\phi\right),
$
for some $F'\in\Gamma\left(U,\mathbb S^-\right)$. The proposition is proved.
\end{proof}
\subsection{The tangential several Dirac operators}By the characterization of $\mathcal J_0(U)$ and $\mathcal J_1(U)$ in Proposition \ref{J13}, we have isomorphism  \begin{equation*}\begin{aligned}
\pi_0:\Gamma\left(U,\mathcal V_0\right)/\mathcal J_0(U)&\xrightarrow{\cong} \Gamma\left(U\cap\partial\Omega,\mathscr V_0\right),\\f=\hat f+\phi\hat{f'}+O\left(\phi^2\right)&\mapsto\hat f,
\end{aligned}\end{equation*}for $\hat f,\hat{f'}$ independent of $x_{01},$ where $\mathscr V_0=\mathcal V_0,$  and \begin{equation*}\begin{aligned}
\pi_1:\Gamma\left(U,\mathcal V_1\right)/\mathcal J_1(U)&\xrightarrow{\cong} \Gamma\left(U\cap\partial\Omega,\mathscr V_1\right),\end{aligned}\end{equation*} with $\mathscr V_1\cong\left(\mathbb S^-\otimes\mathbb C^{k-1}\right)\oplus\left(\mathbb S^-\otimes\mathbb C^{k-1}\right),$ given by $$F=\widehat F+\phi\widehat{F}'+O\left(\phi^2\right)\mapsto\left(F_1,F_2\right),$$
for $\widehat F,\widehat F'$ independent of $x_{01,}$ where \begin{equation}\label{eq:F12}\begin{aligned}
\left(F_1\right)_\mu=&\widehat F_\mu-\nabla_\mu\phi\left(\nabla_0\phi\right)^{-1}\widehat F_0,\\\left(F_2\right)_\mu=&\widehat{F}'_\mu-\nabla_\mu\left( \left(\nabla_0\phi\right)^{-1}\widehat F_0\right)-\nabla_\mu\phi\left(\nabla_0\phi\right)^{-1} \left(\widehat{F}'_0-\nabla_0\left(\left(\nabla_0\phi\right)^{-1} \widehat F_0\right)\right),
\end{aligned}\end{equation}
for $\mu=1,2,\dots,k-1.$ Note that for $\mu=0,$ $\left(F_1\right)_0=\left(F_2\right)_0=0$ by definition.  $\pi_1$ is well-defined  because for $F_A=\nabla_A\left(\phi F+\phi^2\frac{F'}{2}\right)+O\left(\phi^2\right)\in \mathcal J_1(U),$   if we take  $\widehat F_A=\nabla_A\phi\cdot F,$ $\widehat{F}'_A=\nabla_A  F+\nabla_A\phi\cdot F',$ it is easy to check that $\pi_1(F)=0 $ by \eqref{eq:F12}. Then we call the operator  induced from several Dirac operators the \emph{tangential several  Dirac operators}:
\begin{equation*}\begin{aligned} {\mathscr D}_0:\Gamma\left(U\cap\partial \Omega,\mathscr V_0\right) &\rightarrow \Gamma\left(U\cap\partial\Omega,\mathscr V_1\right),\\\hat f&\mapsto\pi_1\left(\mathcal D_0\hat f\right)=\left(F_1,F_2\right). \end{aligned}\end{equation*}
It is given by \begin{equation}\begin{aligned}\label{f12} \left(F_1\right)_\mu=&\nabla_\mu\hat f-\nabla_\mu\phi\left(\nabla_0\phi\right)^{-1}\nabla_0\hat f=Z_\mu\hat f,\\\left(F_2\right)_\mu=&-\nabla_\mu\left( \left(\nabla_0\phi\right)^{-1} \nabla_0\hat f\right)+\nabla_\mu\phi\left(\nabla_0\phi\right)^{-1} \nabla_0\left(\left(\nabla_0\phi\right)^{-1}\nabla_0\hat f\right)=-Z_\mu T\hat f,\end{aligned}\end{equation}
by $T\hat f=\left(\nabla_0\phi\right)^{-1} \nabla_0\hat f,$ since $\hat f$ is independent of $x_{01,}$ for $\mu=1,\dots,k-1.$ On a domain $D$
in $\partial \Omega,$ a function $\hat f\in\Gamma\left(D,\mathcal V_0\right)$ is called  \emph{tangentially  monogenic}  if  $ {\mathscr D}_0\hat f=0,$  i.e. $  Z_\mu \hat f=0$ and $ Z_\mu T \hat f=0.$ Equivalently, $Z_\mu \hat f=0$ and $\left[Z_\mu,T\right]\hat f=0$   as a system of first order differential equations.

\begin{prop}\label{phat}
If $\hat f\in C^\infty\left(\partial \Omega,\mathcal V_0\right)$ is tangentially monogenic,  there exists a representative $\tilde  f\in\Gamma\left(\overline \Omega,\mathcal V_0\right)$ such that $\left.\tilde f\right|_{\partial \Omega}=\hat f$ and $\mathcal D_0\tilde f$ is flat on $\partial \Omega.$
\end{prop}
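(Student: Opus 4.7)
The plan is to build $\tilde f$ near $\partial\Omega$ as a formal Taylor series in the defining function $\phi$, correcting order by order so that $\mathcal D_0\tilde f$ vanishes to infinite order on $\partial\Omega$, and then use a Borel-type summation to realize this series as an honest smooth function. First I would extend $\hat f$ to a smooth $\tilde f^{(0)}$ in a tubular neighborhood $U$ of $\partial\Omega$ by declaring it independent of the transverse coordinate $x_{01}$. Then I would inductively construct $\tilde f^{(N+1)}=\tilde f^{(N)}-\phi^{N+1}u^{(N)}/(N+1)$ so that the errors satisfy $\mathcal D_0\tilde f^{(N)}=\phi^N G^{(N)}$ with $G^{(N)}\in\Gamma(U,\mathcal V_1)$ smooth (using smooth division by $\phi$). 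Because $\nabla_A(\phi^{N+1}u)=(N+1)\phi^N\nabla_A\phi\cdot u+\phi^{N+1}\nabla_A u$, each inductive step reduces on $\partial\Omega$ to the overdetermined algebraic system
\begin{equation*}
\nabla_A\phi\cdot u^{(N)}=G^{(N)}_A,\qquad A=0,\dots,k-1,
\end{equation*}
which by Lemma \ref{exp} is solvable precisely when
\begin{equation*}
\sum_{(B,C)}\nabla_{[A}\phi\,\nabla_{\underline B}\phi\,G^{(N)}_{C]}=0\quad\text{on }\partial\Omega,
\end{equation*}
and then $u^{(N)}=(\nabla_0\phi)^{-1}G^{(N)}_0|_{\partial\Omega}$ solves it.

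The heart of the argument is verifying the compatibility at each step. For $N=0$ the $x_{01}$-independent extension gives $G^{(0)}_A|_{\partial\Omega}=\mathfrak Z_A\hat f=Z_A\hat f+\nabla_A\phi\cdot T\hat f$, and the first component $Z_\mu\hat f=0$ of tangential monogenicity collapses this to the form $\nabla_A\phi\cdot T\hat f$ demanded by Lemma \ref{exp}, with $u^{(0)}=T\hat f$. For $N=1$, starting from $\tilde f^{(1)}=\tilde f^{(0)}-\phi\widetilde{T\hat f}$ (with $\widetilde{T\hat f}$ any $x_{01}$-independent extension), a direct computation yields $G^{(1)}_A|_{\partial\Omega}=-Z_A T\hat f-\nabla_A\phi\cdot T^2\hat f$, and the second component $Z_\mu T\hat f=0$ of tangential monogenicity kills the obstructive term, so that $u^{(1)}=-T^2\hat f$. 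For $N\geq 2$ the compatibility is automatic from the complex identity $\mathcal D_1\mathcal D_0=0$ of Theorem \ref{pell}: expanding $\mathcal D_1(\phi^N G^{(N)})=0$ in powers of $\phi$, the lowest-order coefficient is
\begin{equation*}
N(N-1)\phi^{N-2}\sum_{(B,C)}\nabla_{[A}\phi\,\nabla_{\underline B}\phi\,G^{(N)}_{C]}\big|_{\partial\Omega},
\end{equation*}
and $N(N-1)\neq 0$ forces the desired vanishing.

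Once the sequence $\{\tilde f^{(N)}\}$ is built with $\tilde f^{(N+1)}-\tilde f^{(N)}=O(\phi^{N+1})$ and each $\tilde f^{(N)}$ restricting to $\hat f$ on $\partial\Omega$, a standard Borel-type argument using cutoffs $\chi(\phi/\epsilon_N)$ with $\epsilon_N\downarrow 0$ sufficiently fast assembles the corrections into a single smooth $\tilde f$ on $U\cap\overline\Omega$ with $\tilde f-\tilde f^{(N)}=O(\phi^{N+1})$ for every $N$; consequently $\mathcal D_0\tilde f$ is flat on $\partial\Omega$ and $\tilde f|_{\partial\Omega}=\hat f$. Extending $\tilde f$ arbitrarily and smoothly into the interior of $\Omega$ then yields the required representative in $\Gamma(\overline\Omega,\mathcal V_0)$.

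The main obstacle is precisely the $N=1$ step: without the second-order boundary condition $Z_\mu T\hat f=0$ built into the definition of tangential monogenicity, the induction cannot get past the first correction, so it is essential that $\mathscr D_0\hat f=0$ encodes both the first- and the second-order compatibility. The remaining cases are algebraically driven by $\mathcal D_1\mathcal D_0=0$, but identifying the precise coefficient in the $\phi$-expansion of $\mathcal D_1(\phi^N G^{(N)})$ that realizes the obstruction still requires careful bookkeeping of which terms contribute at order $\phi^{N-2}$.
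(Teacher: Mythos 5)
Your proposal is correct and follows essentially the same route as the paper: an order-by-order correction in powers of $\phi$, with the first two orders supplied by the two components of tangential monogenicity (which the paper packages as the characterization of $\mathcal J_1(U)$ in Proposition \ref{J13}), the higher orders forced by $\mathcal D_1\circ\mathcal D_0=0$ together with the solvability criterion of Lemma \ref{exp}, and a Whitney/Borel-type summation at the end. The explicit coefficient $N(N-1)$ you extract from the $\phi$-expansion is exactly the content of the paper's Lemma \ref{nana} at the corresponding step, so the two arguments differ only in presentation.
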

\begin{proof}
Since $\hat f$ is tangentially monogenic, $ {\mathscr D}_0\hat f=0,$ i.e. $Z_A\hat f=0, Z_AT\hat f=0.$ Let $\tilde f$ be an smooth extension of  $\hat f$ to a neighborhood of $\overline \Omega,$ we see that  $\pi_1 \mathcal D_0\tilde f )=0$ by (\ref{f12}), i.e. $\mathcal D_0\tilde f\in\mathcal J_1(U)$ for any open set $U.$ Thus
\begin{align*}
\nabla_C\tilde f =\nabla_C\left(\phi  \tilde F+\phi^2\cdot\frac{\tilde F'}{2}\right)  +O\left(\phi^2\right),
\end{align*}
for some $\mathbb S^-$-valued  function $\tilde F,\tilde F',$   by  Proposition \ref{J13}. Then we have \begin{align*}
\nabla_C(\tilde f-\phi \tilde F)=\phi\nabla_C \phi\cdot\tilde F'+\phi^2\tilde G_C+O\left(\phi^3\right),
\end{align*}for some $\mathbb S^-$-valued  function  $\tilde G_C.$
  As $\mathcal D_1\circ\mathcal D_0(\tilde f-\phi \tilde F)=0,$   we get
\begin{equation*}\begin{aligned}
0=& \sum_{(B,C)}\nabla_{[A} \nabla_{\underline{B}} \left(\phi\nabla_{C]}\phi\cdot\tilde F'\right)+\sum_{(B,C)}\nabla_{[A} \nabla_{\underline{B}}\left(\phi^2\tilde G_{C]}\right)  +O(\phi) \\=& \sum_{(B,C)}\nabla_{[A} \phi\nabla_{\underline{B}}\phi\left(2\tilde G_{C]}-\nabla_{C]}\tilde F'\right) +O(\phi),
\end{aligned}\end{equation*}
by the first identity of (\ref{rhog}) in Lemma \ref{nana} and
\begin{align*}
\nabla_A\nabla_B\left(\phi^2 \tilde G_C\right)=2\nabla_A\phi\nabla_B\phi\tilde G_C+O(\phi).
\end{align*}
 Then by using Lemma \ref{exp},  we have
\begin{align*}
2\tilde G_C=\nabla_C\tilde F'+\nabla_C\phi \cdot\tilde h'+O\left(\phi\right),
\end{align*}
for some $\mathbb S^-$-valued function  $\tilde h'$.  Therefore \begin{align*}
\nabla_C(\tilde f-\phi \tilde F)=\phi\left[\nabla_C\phi\cdot \tilde F'+\frac\phi2\left(\nabla_C\tilde F'+\nabla_C\phi\cdot \tilde h' \right)+O\left(\phi^2\right)\right],
\end{align*}
and so
\begin{align*}
\nabla_C\left(\tilde f-\phi \tilde F-\frac{\phi^2}{2} \tilde F'\right)=\frac{\phi^2}{2} \nabla_C\phi \cdot\tilde h'+O\left(\phi^3\right).
\end{align*}
  Repeating this procedure, we get
\begin{align*}
\nabla_C\left(\tilde f+\phi \tilde f^{(1)}+\phi^2\tilde f^{(2)}+\dots\right)=O_{\partial \Omega}^\infty
\end{align*}
with a formal power series in $\phi$ with coefficients $C^\infty$ on $\partial \Omega,$ where $O_{\partial \Omega}^\infty$ denotes functions vanishing of infinite order on $\partial \Omega.$ By using Whitney extension theorem as in  \cite{AH1,AH2} we get the conclusion.
The proposition is proved.
\end{proof}
\begin{rem}
{\rm (1)} Such extension for CR functions was constructed by Andreotti-Hill \cite{AH1,AH2}, while extension for pluriharmonic functions satisfying $\partial\bar\partial$-equation was constructed by Andreotti-Nacinovich in \cite{AN}. Recently, such extension for $k$-CF functions was constructed by the second author in \cite{wang29}.\\
{\rm(2)}   It is sufficient to prove Theorem \ref{HBE} that there exists an extension   $\tilde f\in C^3\left(\overline\Omega\right)$ such that $\mathcal D_0 \tilde f$ vanishes on $\partial \Omega$ up to derivatives of second order.
\end{rem}

 \section{Elliptic  complex and solutions of non-homogeneous several Dirac equations}
 \subsection{The ellipticity of the  short Dirac complex}
A differential complex
\begin{equation}\label{seq}
0\rightarrow \Gamma\left(\Omega,V_0\right)\xrightarrow{\mathcal{D}_{0}}\cdots \xrightarrow{\mathcal{D}_{j-1}} \Gamma\left(\Omega,V_j\right)\rightarrow \cdots,
\end{equation} is called \emph{elliptic} if its symbol sequence
\begin{equation*}
0\rightarrow V_0 \xrightarrow{\sigma\left(\mathcal{D}_{0}\right){(\mathbf x;  \xi)}}\cdots \xrightarrow{\sigma\left(\mathcal{D}_{j-1}\right){(\mathbf x;  \xi)}} V_j\rightarrow \cdots,
\end{equation*} is exact for any $\mathbf x\in \Omega,  \xi\in\mathbb R^{kn}\setminus\{\mathbf 0\},$ i.e. $$\ker\sigma\left(\mathcal{D}_{l}\right){(\mathbf x;  \xi)}={\rm Im}\ \sigma\left(\mathcal{D}_{l-1}\right){(\mathbf x;  \xi)}.$$
The  symbol of differential operator $\nabla_A$ is
 \begin{align}\label{simp}
\sigma\left(\nabla_A\right)(\mathbf x;  \xi)=-{\mathbf i}\sum_{j=1}^n {\gamma_j\xi_{A j}}:=\boldsymbol \xi_A,
\end{align}
which is a linear transformation   $\mathbb S^\pm\rightarrow\mathbb S^\mp.$

  Denote  $\left|\boldsymbol \xi_A\right|^2:=\xi_{A1}^2+\dots+\xi_{An}^2$ and $ \boldsymbol \xi_{BC}=:\boldsymbol \xi_B\boldsymbol \xi_C+\boldsymbol \xi_C\boldsymbol \xi_B.$ Then
\begin{align}\label{nunu}
\boldsymbol \xi_A\boldsymbol \xi_A=-{\mathbf i}\sum_k {\gamma_k\xi_{A k}}\left(-{\mathbf i}\sum_j{\gamma_j\xi_{A j}}\right)=\left|\boldsymbol \xi_A\right|^2{\rm id}_{\mathbb S^\pm}.
\end{align}

\begin{lem}\label{l51} For $\Theta\in \ker \sigma'_2(  \xi)\cap\ker \sigma''_2(  \xi)$, we have  \begin{equation}\begin{aligned}\label{tabc}
 \left|\boldsymbol \xi_0\right|^2\Theta_{ABC}= \boldsymbol \xi_A\boldsymbol \xi_B\Theta_{00C} +\boldsymbol \xi_A\boldsymbol \xi_C\Theta_{00B}-  \xi_{BC}\Theta_{00A}.
\end{aligned}\end{equation}
\begin{proof}
Firstly, note that since $\Theta\in\mathcal V_2,$ we have  \begin{equation}\begin{aligned}\label{TABC}
\Theta_{[A\underline{B}C]}+\Theta_{[A\underline{C}B]} =\frac32\Theta_{ABC},\quad\Theta_{ABC}=\Theta_{ACB},
\end{aligned}\end{equation}by  the characterization of $\mathcal V_2$ in (\ref{HABC})-(\ref{Hcomm}). It follows from
  $\Theta\in \ker \sigma'_2(  \xi)$ that  \begin{equation}\begin{aligned}\label{sig2} \left(\sigma'_2(  \xi)\Theta\right)_{DABC}
=\frac32\left(\boldsymbol \xi_D\Theta_{ABC}+\boldsymbol \xi_A\Theta_{DBC} + \boldsymbol \xi_B\Theta_{CDA} +\boldsymbol \xi_C\Theta_{BDA}\right) =0,\end{aligned}\end{equation}  as in  (\ref{d2d1}). If letting $B=C=0$ in the first identity in  (\ref{TABC}), we get
\begin{align}\label{t00}
\Theta_{A00}=-2\Theta_{00A}.
\end{align}
Letting $A=D=0$ in (\ref{sig2}), we get
\begin{align}\label{ker'}
2\boldsymbol \xi_0\Theta_{0BC} +\boldsymbol \xi_B\Theta_{C00}+\boldsymbol \xi_C\Theta_{B00} =0,
\end{align} i.e.
\begin{align}\label{t01}
\boldsymbol \xi_0\Theta_{0BC}=\boldsymbol \xi_B\Theta_{00 C}+\boldsymbol \xi_C\Theta_{00 B},
\end{align}by (\ref{t00}).

  On the other hand, for   $\Theta\in \ker \sigma''_2(  \xi),$    as in  (\ref{T123}),   we have \begin{equation*}\begin{aligned}
2\left(\sigma_2''(  \xi)\Theta\right)_{EDABC} =&\sum_{(D,B,C)}\left( 2\boldsymbol \xi_{[E}\boldsymbol \xi_{\underline{D}}\Theta_{A]BC} +\boldsymbol \xi_{D}\boldsymbol \xi_{[E}\Theta_{A]BC} +\boldsymbol \xi_{BC}\Theta_{[E\underline{D}A]}\right)
\\=&\left(\boldsymbol \xi_E\boldsymbol \xi_D+ \boldsymbol \xi_{DE}\right)\Theta_{ABC}- \left(\boldsymbol \xi_A\boldsymbol \xi_D+\boldsymbol  \xi_{DA}\right)\Theta_{EBC} + \boldsymbol \xi_{BC}\left(\Theta_{EDA}-\Theta_{ADE}\right) \\&+\left(\boldsymbol \xi_E\boldsymbol \xi_C+\boldsymbol  \xi_{CE}\right)\Theta_{ABD}-\left(\boldsymbol \xi_A\boldsymbol \xi_C+ \boldsymbol  \xi_{CA}\right)\Theta_{EBD} + \boldsymbol  \xi_{BD}\left(\Theta_{ECA} -\Theta_{ACE}\right) \\&+\left(\boldsymbol \xi_E\boldsymbol \xi_B+  \boldsymbol\xi_{BE}\right)\Theta_{ADC} -\left(\boldsymbol \xi_A\boldsymbol \xi_B+\boldsymbol  \xi_{BA}\right)\Theta_{EDC} + \boldsymbol \xi_{DC}\left(\Theta_{EBA} - \Theta_{ABE}\right)=0.
\end{aligned}\end{equation*}
Letting  $D=E=0$ above, we get
  \begin{equation}\begin{aligned}\label{DE0}
0=&3\left|\boldsymbol \xi_0\right|^2\Theta_{ABC} -\left(2\boldsymbol \xi_A\boldsymbol \xi_0  +\boldsymbol \xi_0\boldsymbol \xi_A\right) \Theta_{0BC} +  \boldsymbol\xi_{BC}\left(\Theta_{00A}- \Theta_{A00}\right) \\&+\left(2\boldsymbol \xi_0\boldsymbol \xi_C+\boldsymbol \xi_C\boldsymbol \xi_0\right)\Theta_{AB0} -\left(2\boldsymbol \xi_A\boldsymbol \xi_C+\boldsymbol \xi_C\boldsymbol \xi_A\right)\Theta_{00B} + \boldsymbol \xi_{0B}\left(\Theta_{0CA} - \Theta_{AC0}\right)\\& +\left(2\boldsymbol \xi_0\boldsymbol \xi_B +\boldsymbol \xi_B\boldsymbol \xi_0\right)\Theta_{A0C} -\left(2\boldsymbol \xi_A\boldsymbol \xi_B+\boldsymbol \xi_B\boldsymbol \xi_A\right)\Theta_{00C} +\boldsymbol  \xi_{0C}\left(\Theta_{0BA} - \Theta_{AB0}\right).\end{aligned}\end{equation}
Note that     $\Theta_{A0C}=\Theta_{AC0},$ and \begin{align}\label{0BA}\Theta_{0BA} +\Theta_{AB0}=-\Theta_{BA0},\end{align} by letting $C=0$ in (\ref{TABC}), and  \begin{align}\label{BA0}
\boldsymbol \xi_A\Theta_{0BC}+\boldsymbol \xi_B\Theta_{CA0}+\boldsymbol \xi_C\Theta_{BA0}= -\boldsymbol \xi_0\Theta_{ABC},
\end{align}by letting  $D=0$ in  (\ref{sig2}). We can use (\ref{0BA}) to simplify (\ref{DE0}) to get    { \begin{equation}\begin{aligned} \label{DE1} 0=&3\left|\boldsymbol \xi_0\right|^2\Theta_{ABC} -\left(2\boldsymbol \xi_A\boldsymbol \xi_0+\boldsymbol \xi_0\boldsymbol \xi_A\right)\Theta_{0BC} +3 \boldsymbol \xi_{BC}\Theta_{00 A}\\& -\boldsymbol \xi_0\boldsymbol \xi_C\Theta_{BA0}  +\boldsymbol \xi_C\boldsymbol \xi_0\Theta_{0B A}-\boldsymbol \xi_0\boldsymbol \xi_B\Theta_{CA0} +\boldsymbol \xi_B\boldsymbol \xi_0\Theta_{0C A}\\& -\left(2\boldsymbol \xi_A\boldsymbol \xi_C +\boldsymbol \xi_C\boldsymbol \xi_A\right)\Theta_{00B}  -\left(2\boldsymbol \xi_A\boldsymbol \xi_B +\boldsymbol \xi_B\boldsymbol \xi_A\right)\Theta_{00C}, \end{aligned}\end{equation}}by
 \begin{equation*}\begin{aligned}
 \left(2\boldsymbol \xi_0\boldsymbol \xi_C+\boldsymbol \xi_C\boldsymbol \xi_0\right)\Theta_{AB0}+ \boldsymbol \xi_{0C}\left( \Theta_{0BA} - \Theta_{AB0}\right)=&\boldsymbol \xi_0\boldsymbol \xi_C\left(\Theta_{AB0} +\Theta_{0BA}\right)+\boldsymbol \xi_C\boldsymbol \xi_0\Theta_{0BA}\\=&- \boldsymbol \xi_0\boldsymbol \xi_C\Theta_{BA0} +\boldsymbol \xi_C\boldsymbol \xi_0\Theta_{0BA},
 \end{aligned}\end{equation*} and the same  formula with $B$ and $C$ exchanged. Now substitute (\ref{BA0}) into (\ref{DE1}) further to  get\begin{equation*}\begin{aligned} 0=&4\left|\boldsymbol \xi_0\right|^2\Theta_{ABC}-2\boldsymbol \xi_A\boldsymbol \xi_0\Theta_{0BC} +\boldsymbol \xi_C\boldsymbol \xi_0\Theta_{0B A} +\boldsymbol \xi_B\boldsymbol \xi_0\Theta_{0C A} -\left(2\boldsymbol \xi_A\boldsymbol \xi_C+\boldsymbol \xi_C\boldsymbol \xi_A\right)\Theta_{00B} \\& -2\left(\boldsymbol \xi_A\boldsymbol \xi_B+\boldsymbol \xi_B\boldsymbol \xi_A\right)\Theta_{00C} +3 \boldsymbol \xi_{BC}\Theta_{00 A}\\=&4\left|\boldsymbol \xi_0\right|^2\Theta_{ABC} -4 \boldsymbol \xi_A\boldsymbol \xi_B\Theta_{00C} -4\boldsymbol \xi_A\boldsymbol \xi_C\Theta_{00B}+4  \boldsymbol\xi_{BC}\Theta_{00A},
\end{aligned}\end{equation*}  by using  (\ref{t01}) in the last identity.  The lemma is proved.
\end{proof}
\end{lem}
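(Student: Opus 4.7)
The plan is to extract enough information from the two kernel conditions $\sigma'_2(\xi)\Theta = 0$ and $\sigma''_2(\xi)\Theta = 0$ by specialising indices to $0$, and then combine the resulting identities. Since $\Theta \in \mathcal V_2$, I would first record from Proposition \ref{p1}(1) the Weyl-module constraints $\Theta_{[A\underline{B}C]}+\Theta_{[A\underline{C}B]}=\frac{3}{2}\Theta_{ABC}$ and $\Theta_{ABC}=\Theta_{ACB}$. Setting $B=C=0$ in the first of these immediately yields $\Theta_{A00}=-2\Theta_{00A}$, a substitution I expect to use throughout. Setting only $C=0$ yields the weaker partial symmetry $\Theta_{0BA}+\Theta_{AB0}=-\Theta_{BA0}$, which will be needed to collapse mixed terms.

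Next I would compute $\sigma'_2(\xi)\Theta$ by replacing each $\nabla_A$ in (\ref{D0}) by $\boldsymbol \xi_A$ and simplifying via the Weyl symmetrisation exactly as in (\ref{d2d1}), to obtain the first-order relation $\boldsymbol \xi_D\Theta_{ABC}+\boldsymbol \xi_A\Theta_{DBC}+\boldsymbol \xi_B\Theta_{CDA}+\boldsymbol \xi_C\Theta_{BDA}=0$. Setting $A=D=0$ and applying $\Theta_{A00}=-2\Theta_{00A}$ gives the key first-order identity $\boldsymbol \xi_0\Theta_{0BC}=\boldsymbol \xi_B\Theta_{00C}+\boldsymbol \xi_C\Theta_{00B}$. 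Keeping only $D=0$ general gives the auxiliary identity $\boldsymbol \xi_A\Theta_{0BC}+\boldsymbol \xi_B\Theta_{CA0}+\boldsymbol \xi_C\Theta_{BA0}=-\boldsymbol \xi_0\Theta_{ABC}$, which I will need in order to rewrite $\Theta_{ABC}$ in terms of $\Theta_{0\bullet\bullet}$ and $\Theta_{\bullet\bullet 0}$ pieces.

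Then I would compute $\sigma''_2(\xi)\Theta$ from the formula for $\mathcal D''_2$ in (\ref{D0}), replacing $\nabla_A$ by $\boldsymbol \xi_A$ and $\Delta_{BC}$ by $\xi_{BC}$, and specialise $D=E=0$. The leading piece is $\boldsymbol \xi_0\boldsymbol \xi_0\Theta_{ABC}=|\boldsymbol \xi_0|^2\Theta_{ABC}$ by (\ref{nunu}), while all other terms carry at least one index $0$ on $\Theta$. I would then substitute the first-order identity (and its variants under $B\leftrightarrow C$ or $A\leftrightarrow 0$) into this $D=E=0$ reduction to eliminate every factor of the form $\boldsymbol \xi_0\Theta_{0\bullet\bullet}$, converting everything into $\boldsymbol \xi$'s acting on $\Theta_{00\bullet}$.

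The main obstacle will be the bookkeeping: the anticommutator $\xi_{BC}$, the skew contractions $\Theta_{[E\underline{D}A]}$, and the non-commutativity of the $\boldsymbol \xi_A$'s (only their anticommutators are scalar) all have to be tracked carefully, and the partial symmetry $\Theta_{0BA}+\Theta_{AB0}=-\Theta_{BA0}$ must be invoked at the right moment to merge terms like $\boldsymbol \xi_0\boldsymbol \xi_C\Theta_{AB0}$ with $\boldsymbol \xi_C\boldsymbol \xi_0\Theta_{0BA}$ into a clean $\boldsymbol \xi$-on-$\Theta_{00\bullet}$ expression. Once those mergers are carried out symmetrically in $B,C$, all correction terms should collapse to produce exactly $|\boldsymbol \xi_0|^2\Theta_{ABC}=\boldsymbol \xi_A\boldsymbol \xi_B\Theta_{00C}+\boldsymbol \xi_A\boldsymbol \xi_C\Theta_{00B}-\xi_{BC}\Theta_{00A}$.
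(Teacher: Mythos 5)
Your proposal follows essentially the same route as the paper's proof: the same Weyl-module constraints and the same index specialisations ($B=C=0$ and $C=0$ in the $\mathcal V_2$ identity; $A=D=0$ and $D=0$ in the $\sigma'_2$ relation; $D=E=0$ in the $\sigma''_2$ relation), yielding exactly the identities \eqref{t00}, \eqref{t01}, \eqref{0BA} and \eqref{BA0} that the paper combines to reach \eqref{tabc}. The remaining work you flag is precisely the bookkeeping carried out in \eqref{DE0}--\eqref{DE1}, so the approach is correct and matches the paper.
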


{\it Proof of Theorem \ref{exact}.} Since $  \xi\in\mathbb R^{kn}\setminus\{\mathbf 0\},$ we can assume $\boldsymbol \xi_0\neq0$ without loss of generality.  For any $\vartheta\in\ker \sigma_0(  \xi),$ we have $\left(\sigma_0(  \xi)\vartheta\right)_A= \boldsymbol \xi_A\vartheta=0,$ for   $A=0,\dots,k-1.$  Then $
0=\boldsymbol \xi_A\boldsymbol \xi_A\vartheta=|\boldsymbol \xi_A|^2\vartheta,
$
by (\ref{nunu}). So $\vartheta=0.$ Hence $\sigma_0$ is injective.

  If   $\eta=\left( \eta_{A}  \right)\in \ker \sigma_1(  \xi),$ we have
\begin{align}\label{bac}
\boldsymbol \xi_A\boldsymbol \xi_B\eta_C +\boldsymbol \xi_A\boldsymbol \xi_C\eta_B -\boldsymbol \xi_C\boldsymbol \xi_B\eta_A-  \boldsymbol \xi_B\boldsymbol \xi_C\eta_A=0,
\end{align}by definition  (\ref{D0}). Letting $C= B=0$ in (\ref{bac}), we have
\begin{align}\label{AC}
\boldsymbol \xi_A\boldsymbol \xi_0\eta_0=\left|\boldsymbol \xi_0\right|^2\eta_A.
\end{align}
Set $\vartheta:=\frac{\boldsymbol \xi_{0}\eta_0}{|\boldsymbol \xi_0|^2}\in\mathcal V_0.$ Then \begin{align}\label{xixi}\left(\sigma_0(  \xi)\vartheta\right)_A =\boldsymbol \xi_A\frac{\boldsymbol \xi_{0}\eta_0}{|\boldsymbol \xi_0|^2} =\eta_A,\end{align}
for   $A=0,\dots,k-1,$ i.e. $\ker \sigma_1(  \xi)\subset{\rm Im}\ \sigma_0(  \xi).$  It follows from  $\mathcal D_{1}\circ\mathcal D_0=0$  that $\sigma_{1}(  \xi)\circ\sigma_0(  \xi)=0.$   So we have  ${\rm Im}\  \sigma_0(  \xi)=\ker\sigma_{1}(  \xi).$

By Theorem \ref{pell},  $\mathcal D_2'\circ\mathcal D_1=0,\ \mathcal D_2''\circ\mathcal D_1=0.$ It follows   that $\sigma'_2( \xi)\circ\sigma_1(  \xi)=0,\ \sigma''_2(  \xi)\circ\sigma_1(  \xi)=0,$ for any  $  \xi\in\mathbb R^{kn}\setminus\{\mathbf 0\},$ respectively. Thus $\ker \sigma'_2(  \xi)\cap\ker \sigma''_2(  \xi) \supset{\rm Im}\ \sigma_1(  \xi).$ Conversely, for  $\Theta\in\ker \sigma'_2(  \xi)\cap\ker \sigma''_2(  \xi),$ take $\eta\in\mathcal V_1$ with   $$\eta_A:=\frac{2\Theta_{00A}}{\left|\boldsymbol \xi_0\right|^2},$$ $A=0,\dots,k-1.$ Then we have \begin{equation*}\begin{aligned}
 \left(\sigma_1(  \xi)\eta\right)_{ABC}=&\frac{1}{2} \left(\boldsymbol \xi_A\boldsymbol \xi_B\eta_C +\boldsymbol \xi_A\boldsymbol \xi_C\eta_B-\boldsymbol  \xi_{BC}\eta_A\right) \\=& \frac{1}{\left|\boldsymbol \xi_0\right|^2}\left(\boldsymbol \xi_A\boldsymbol \xi_B\Theta_{00C} +\boldsymbol \xi_A\boldsymbol \xi_C\Theta_{00B}- \boldsymbol \xi_{BC}\Theta_{00A}\right) =\Theta_{ABC},
\end{aligned}\end{equation*} by (\ref{tabc}) in Lemma \ref{l51}.
Thus we have $\ker \sigma'_2(  \xi)\cap\ker \sigma''_2(  \xi)\subset{\rm Im}\ \sigma_1(  \xi).$
 The theorem is proved.
\qed

\subsection{The uniform ellipticity of the associated Hodge-Laplacian operators}
The natural Hodge Laplacian associated to the differential  complex (\ref{co3}) should be
\begin{align*}
\widetilde\Box_j:=\mathcal D_{j-1}\mathcal D_{j-1}^*+\mathcal D_j^*\mathcal D_j,
\end{align*}
for $j=1,2.$ But   $\mathcal D_1$ and ${\mathcal D''_2}$ are  differential operators of second order, while $\mathcal D_0,{\mathcal D}'_2$ are of first order. Thus  $\widetilde\Box_1$ and $\widetilde\Box_2$ have degenerate principal symbols and  are not uniformly elliptic. So it is better to  consider the associated Hodge  Laplacians of fourth order as \begin{equation}\begin{aligned}\label{hodge}
\Box_0:=&\left(\mathcal D_0^*\mathcal D_0\right)^2,\\\Box_1:=&\left(\mathcal D_0\mathcal D_0^*\right)^2+\mathcal D_1^*\mathcal D_1,\\\Box_2:=&\mathcal D_1\mathcal D_1^*+\left({\mathcal{D}_2'}^*{\mathcal{D}_2'}\right)^2 +{\mathcal{D}_2''}^*{\mathcal{D}_2''}.
\end{aligned}\end{equation}  It is easy to see that  $\Box_0=\Delta^2$ by Corollary \ref{p23}.
\begin{prop}\label{boxj}
Let $L_j(  \xi):=\sigma\left(\Box_j\right)(  \xi)$ and $r_j={\rm dim} \left(\mathcal V_j\right).$ Then
\begin{equation*}\begin{aligned}
L_0(  \xi)=&\left(\sigma_0(  \xi)^*  \sigma_0(  \xi)\right)^2,\\L_1(  \xi)=&\left(\sigma_0(  \xi)  \sigma_0(  \xi)^*\right)^2+\sigma_1(  \xi)^*  \sigma_1(  \xi),\\L_2(  \xi)=&\sigma_1(  \xi)  \sigma_1(  \xi)^*+\left(\sigma'_2(  \xi)^*  \sigma'_2(  \xi)\right)^2+\sigma_2''(  \xi)^*  \sigma_2''(  \xi),
\end{aligned}\end{equation*}
are all automorphisms of $\mathbb C^{r_j}$  respectively, for any fixed dual variable $  \xi\in\mathbb R^{kn}\setminus\{\mathbf 0\},$   and are all homogeneous of degree $4$ in $  \xi.$
\end{prop}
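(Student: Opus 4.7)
The plan is to reduce each assertion to a direct algebraic consequence of Theorem \ref{exact}. First I would check the formulas for $L_j(\xi)$ and the homogeneity. Recall that $\mathcal D_0$ and $\mathcal D_2'$ are first order while $\mathcal D_1$ and $\mathcal D_2''$ are second order, so that $\sigma_0$ and $\sigma_2'$ are homogeneous of degree $1$ in $\xi$ and $\sigma_1,\sigma_2''$ are homogeneous of degree $2$. Hence each summand in the definition \eqref{hodge} of $\Box_j$ is a differential operator of order exactly $4$. Since the principal symbol is multiplicative under composition, commutes with passage to formal adjoints (up to sign, which is irrelevant here because every term appears as $A^{*}A$), and additive on sums of operators of the same order, the principal symbol of $\Box_j$ is the stated $L_j(\xi)$, and is homogeneous of degree $4$ in $\xi$.

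Next, each $L_j(\xi)$ is manifestly a sum of non-negative self-adjoint operators on the finite dimensional Hermitian space $\mathbb C^{r_j}$: indeed $\sigma_0^*\sigma_0, \sigma_0\sigma_0^*, \sigma_1^*\sigma_1, \sigma_1\sigma_1^*, \sigma_2'^*\sigma_2', \sigma_2''^*\sigma_2''$ are obviously non-negative, and if $B=A^*A\geq 0$ then $B^2=B^*B\geq 0$. Therefore $L_j(\xi)$ is self-adjoint and $\geq 0$, and it is an automorphism if and only if $\ker L_j(\xi)=\{0\}$. This reduces everything to checking triviality of kernels.

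For $L_0(\xi)$: if $L_0(\xi)v=0$ then $0=\langle (\sigma_0^*\sigma_0)^2 v,v\rangle=\|\sigma_0^*\sigma_0 v\|^2$, so $\sigma_0 v\in\ker\sigma_0^*$ and in particular $\|\sigma_0 v\|^2=\langle \sigma_0^*\sigma_0 v, v\rangle=0$, giving $v\in\ker\sigma_0(\xi)=\{0\}$ by Theorem \ref{exact}. For $L_1(\xi)$: if $L_1(\xi)v=0$ then pairing with $v$ yields $\|\sigma_0\sigma_0^* v\|^2+\|\sigma_1(\xi)v\|^2=0$, hence $\sigma_1(\xi)v=0$ and $\sigma_0^*v=0$ (the latter by the same trick as above). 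By Theorem \ref{exact}, $\ker\sigma_1(\xi)=\mathrm{Im}\,\sigma_0(\xi)$, so $v=\sigma_0(\xi)w$ for some $w$; but then $0=\langle\sigma_0^*v,w\rangle=\|\sigma_0 w\|^2$, so $v=\sigma_0 w=0$. For $L_2(\xi)$: pairing $L_2(\xi)v=0$ with $v$ gives $\|\sigma_1^*v\|^2+\|\sigma_2'^*\sigma_2' v\|^2+\|\sigma_2''(\xi)v\|^2=0$, so $\sigma_1^*v=0$, $\sigma_2'(\xi)v=0$ and $\sigma_2''(\xi)v=0$. By Theorem \ref{exact}, $\ker\sigma_2'(\xi)\cap\ker\sigma_2''(\xi)=\mathrm{Im}\,\sigma_1(\xi)$, so $v=\sigma_1(\xi)u$ and then $0=\langle\sigma_1^*v,u\rangle=\|\sigma_1(\xi)u\|^2$, forcing $v=0$.

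The only subtlety to watch is to make sure one uses, in each case, the \emph{right} exactness statement from Theorem \ref{exact}: one needs $\ker\sigma_0=\{0\}$ for $L_0$, one needs $\ker\sigma_1=\mathrm{Im}\,\sigma_0$ for $L_1$, and the joint kernel statement $\ker\sigma_2'\cap\ker\sigma_2''=\mathrm{Im}\,\sigma_1$ for $L_2$; this is precisely where the choice of a fourth order Hodge Laplacian—rather than the naive $\mathcal D_{j-1}\mathcal D_{j-1}^*+\mathcal D_j^*\mathcal D_j$, whose symbol degenerates—pays off, since otherwise the summands would not all be of the same order and the symbol computation above would break down. No other step should be delicate; the whole argument is just linear algebra on $\mathbb C^{r_j}$ together with the pointwise exactness of the symbol sequence \eqref{simco}.
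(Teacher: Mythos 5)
Your proof is correct and is essentially the paper's argument in dual form: where the paper decomposes $\mathbb{C}^{r_j}$ into the orthogonal ranges of the summands of $L_j(\xi)$ to conclude surjectivity, you use positive semidefiniteness of each summand to reduce to $\ker L_j(\xi)=\{0\}$ and then invoke the same exactness statements from Theorem \ref{exact} ($\ker\sigma_0=\{0\}$, $\ker\sigma_1=\mathrm{Im}\,\sigma_0$, $\ker\sigma_2'\cap\ker\sigma_2''=\mathrm{Im}\,\sigma_1$). Since injectivity and surjectivity coincide for a square matrix and both arguments rest on identical inputs, nothing is missing.
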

\begin{proof} The proof is similar to \cite{wang2008} \cite{Wa10} for
  $k$-Cauchy-Fueter complexes. We sketch it for $L_1(  \xi).$  By definition,  $L_1(  \xi)$ is  homogeneous of degree $4$ in $  \xi.$
Note that ${\rm range}\ \sigma_1(  \xi)\perp\ker  \sigma_1(  \xi)^*$, and
\begin{equation}\label{eq:L-decomposition}
 \mathbb C^{r_1}={\rm range}\ \sigma_1(  \xi)^*\oplus\ker  \sigma_1(  \xi)
\end{equation}
  are orthogonal decompositions  by the ellipticity (\ref{simco}) and simple linear algebra. Therefore
  $
{\rm range}\ \sigma_1(  \xi)^* \sigma_1(  \xi)= {\rm range}\ \sigma_1(  \xi)^*.
$
Similarly
${\rm range}\ \sigma_0(  \xi)^*\oplus\ker  \sigma_0(  \xi)=\mathbb C^{r_0}$ is a orthogonal decomposition, and so
\begin{equation} 
 \label{eq:L-decomposition2}{\rm range}\ \sigma_0(  \xi)  \sigma_0(  \xi)^*={\rm range}\ \sigma_0(  \xi)=\ker \sigma_1(  \xi).
\end{equation}
 Substitute them into the decomposition \eqref{eq:L-decomposition} to get
\begin{equation*}\begin{aligned}
\mathbb C^{r_1}=&{\rm range}\ \sigma_0(  \xi)  \sigma_0(  \xi)^*\oplus {\rm range}\ \sigma_1(  \xi)^* \sigma_1(  \xi),
\end{aligned}\end{equation*}
for $  \xi\in\mathbb R^{kn}\setminus\{\mathbf 0\}.$   We also have ${\rm range}\left(\sigma_0(  \xi)  \sigma_0(  \xi)^*\right)^2={\rm range}\ \sigma_0(  \xi) \sigma_0(  \xi)^*  $ by using \eqref{eq:L-decomposition2}, $\sigma_0(  \xi)^* \sigma_1(  \xi)^* =0$ and the  decomposition  \eqref{eq:L-decomposition}. Therefore,
  $$\mathbb C^{r_1}={\rm range}\left( \sigma_0(  \xi)  \sigma_0(  \xi)^*\right)^2\oplus {\rm range}\ \sigma_1(  \xi)^* \sigma_1(  \xi),$$
 Then the linear transformation $L_1(  \xi)$ is automorphisms of $\mathbb C^{r_1}$ for any fixed  $  \xi\in\mathbb R^{kn}\setminus\{\mathbf 0\}.$

Similarly, $L_0(  \xi)$ and $L_2(  \xi)$ are also automorphisms of $\mathbb C^{r_0}$ and  $\mathbb C^{r_2},$ respectively.
\end{proof}
Proposition \ref{boxj} implies that the associated Laplacian operators in (\ref{hodge}) are all uniformly elliptic differential operators of fourth order with constant coefficients, i.e. there exists a constant $C > 0$ such that
\begin{align*}
C^{-1}|  \xi|^4I_{r_j}\leq L_j(  \xi)\leq C |  \xi|^4I_{r_j}.
\end{align*}

Let $G_j(  \xi)=L_j(  \xi)^{-1}$ for $  \xi\in\mathbb R^{kn}\setminus\{\mathbf 0\}.$ It is a smooth  homogeneous functions of degree $4-kn.$ Let $\mathbf G_j$ be the convolution operator with kernel $G_j.$ The following result in harmonic analysis can be applied to $\mathbf G_j$.
\begin{prop}{\rm(\cite[Proposition 2.4.7]{Grafakos})}\label{PV} Let $K\in C^\infty\left(\mathbb R^{kn}\setminus\{\mathbf 0\}\right)$   be a homogeneous function of degree $l-kn,$ and  let $\mathbf K$ be the operator defined by $\mathbf K\phi = \phi *K.$
Then, for $\phi\in C_c^\infty\left(\mathbb R^{kn}\right)$,  $A_1,\dots,A_l=0,\dots,k-1,j_1,\dots, j_l=1,\dots, n,$
\begin{align}\label{3.2}
\partial_{A_1j_1}\dots\partial_{A_lj_l}(\mathbf K\phi)=P.V.\left(\phi*\partial_{A_1j_1} \dots\partial_{A_lj_l}K\right) +a_{A_1j_1\dots A_lj_l}\phi,
\end{align}
for some constant  $a_{A_1j_1\dots A_lj_l}.$ Each term in {\rm (\ref{3.2})} is $C^\infty$ and the identity holds as $C^\infty$ functions.

Moreover, $\partial_{A_1j_1}\dots\partial_{A_lj_l}  K$  is a Calderon-Zygmund kernel on $\mathbb R^{kn}.$ The singular integral operator $f\rightarrow P.V.$ $ \left(f *\partial_{A_1j_1}\dots\partial_{A_lj_l}K\right)$ is bounded
on $L^p$ for $1 < p <\infty.$ {\rm (\ref{3.2})} holds as $L^p$ functions.
\end{prop}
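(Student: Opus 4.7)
The plan is to treat this statement as three essentially independent classical facts: commuting lower-order derivatives through the convolution, identifying the borderline $l$-th derivative as a homogeneous distribution of degree $-kn$, and invoking Calder\'on--Zygmund theory for the $L^p$ bound. Throughout, $K\in C^\infty(\mathbb R^{kn}\setminus\{\mathbf 0\})$ is homogeneous of degree $l-kn$, so its iterated partial derivatives $\partial_{A_1j_1}\cdots\partial_{A_mj_m}K$ are $C^\infty$ off the origin and homogeneous of degree $l-m-kn$.

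First I would observe that for every $0\le m\le l-1$ the degree $l-m-kn$ is strictly greater than $-kn$, so the kernel is locally integrable on $\mathbb R^{kn}$. Dominated convergence (using that $\phi\in C_c^\infty$ and $\partial^m K$ is $L^1_{\rm loc}$) then justifies differentiation under the integral, and iteratively one obtains
$$
\partial_{A_1j_1}\cdots\partial_{A_mj_m}(\phi*K)=\phi*\bigl(\partial_{A_1j_1}\cdots\partial_{A_mj_m}K\bigr)
$$
as $C^\infty$ functions for $m\le l-1$. This disposes of everything but the last derivative.

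The heart of the argument is the borderline $l$-th differentiation. Set $\tilde K:=\partial_{A_1j_1}\cdots\partial_{A_{l-1}j_{l-1}}K$, which is $L^1_{\rm loc}$ and homogeneous of degree $1-kn$. Its distributional derivative $\partial_{A_lj_l}\tilde K$ is a homogeneous tempered distribution of degree $-kn$. Testing against $\psi\in C_c^\infty(\mathbb R^{kn})$ and writing
$$
\langle\partial_{A_lj_l}\tilde K,\psi\rangle=-\lim_{\epsilon\to 0^+}\int_{|y|>\epsilon}\tilde K(y)\,\partial_{A_lj_l}\psi(y)\,dy,
$$
I would integrate by parts on $\{|y|>\epsilon\}$ and extract the boundary integral on $\{|y|=\epsilon\}$. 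The volume term converges to the principal value of $\int(\partial_{A_lj_l}\tilde K)\psi\,dy$, while homogeneity of $\tilde K$ forces the surface integral to collapse, as $\epsilon\to 0$, to a constant multiple of $\psi(0)$. This yields the canonical decomposition
$$
\partial_{A_lj_l}\tilde K=\mathrm{P.V.}\,\partial_{A_lj_l}\tilde K+a_{A_1j_1\cdots A_lj_l}\,\delta_0,
$$
with $a_{A_1j_1\cdots A_lj_l}$ given by an explicit surface integral over $S^{kn-1}$. Convolving with $\phi$ and combining with the first step gives (\ref{3.2}) as a pointwise identity of $C^\infty$ functions.

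For the Calder\'on--Zygmund assertion, the kernel $\mathcal K:=\partial_{A_1j_1}\cdots\partial_{A_lj_l}K$ is smooth on $\mathbb R^{kn}\setminus\{\mathbf 0\}$ and homogeneous of degree $-kn$, so homogeneity automatically gives the size bound $|\mathcal K(y)|\lesssim |y|^{-kn}$ and the H\"ormander gradient condition. The cancellation property $\int_{S^{kn-1}}\mathcal K\,dS=0$ is immediate from the fact that $\mathcal K$ is the distributional derivative of the $L^1_{\rm loc}$ function $\tilde K$, via Stokes' theorem on an annulus together with homogeneity. These are exactly the hypotheses of the Calder\'on--Zygmund theorem, which produces weak-type $(1,1)$ from a Calder\'on--Zygmund decomposition and then $L^p$ boundedness for $1<p<\infty$ by Marcinkiewicz interpolation and duality; the extension of (\ref{3.2}) from $C^\infty$ to $L^p$ then follows by density. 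The main obstacle is pinning down the exact constant $a_{A_1j_1\cdots A_lj_l}$ in the borderline step, since this requires tracking boundary terms carefully as $\epsilon\to 0$; once that regularization is settled the remaining parts are either routine calculus or direct appeals to classical harmonic analysis.
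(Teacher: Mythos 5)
The paper does not prove this proposition at all: it is quoted verbatim from Grafakos \cite[Proposition 2.4.7]{Grafakos}, so there is no in-paper argument to compare against. Your sketch is a correct reconstruction of the standard textbook proof, and the key steps are all in the right place: the degree count showing $\partial_{A_1j_1}\cdots\partial_{A_mj_m}K$ is locally integrable for $m\le l-1$ (so those derivatives pass through the convolution with no delta contribution), the borderline integration by parts on $\{|y|>\epsilon\}$ producing $\mathrm{P.V.}\,\partial_{A_lj_l}\tilde K+a\,\delta_0$ with $a$ a surface integral that is $\epsilon$-independent by homogeneity of degree $1-kn$, and the derivation of the cancellation $\int_{S^{kn-1}}\mathcal K\,dS=0$ from the divergence theorem on an annulus combined with the logarithmic polar-coordinate computation. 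The one logical ingredient your Calder\'on--Zygmund step is missing is the seed estimate: the Calder\'on--Zygmund decomposition yields weak $(1,1)$ only once boundedness on some $L^{p_0}$ (usually $L^2$) is already known, and for these kernels that comes from checking that the Fourier transform of $\mathrm{P.V.}\,\mathcal K$ is a bounded, homogeneous degree-zero multiplier (using the smoothness and mean-zero property of $\mathcal K$ on the sphere) and applying Plancherel; only then do interpolation and duality give $1<p<\infty$. With that supplement, and the density argument you indicate for extending \eqref{3.2} to $L^p$, the proposal matches the cited source's proof.
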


\begin{prop}\label{GG}
The associated Hodge-Laplacian $\Box_j$ has the inverse $\mathbf G_j$ in $L^2\left(\mathbb R^{kn},\mathcal V_j\right), j=0,1,2 $.  $\mathbf G_j$ can be extended to a bounded linear operator from $L^p\left(\mathbb R^{kn},\mathcal V_j\right)$ to $W^{4,p}\left(\mathbb R^{kn},\mathcal V_j\right),$ for $1 < p <\infty.$ For  a $\mathcal V_j$-valued function $f$   in $C_c^l\left(\mathbb R^{kn},\mathcal V_j\right)$ and any non-negative integer $l,$ we have $\mathbf G_jf\in C^{l+3}\left(\mathbb R^{kn},\mathcal V_j\right).$
\end{prop}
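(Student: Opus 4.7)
The approach is standard harmonic analysis for constant-coefficient uniformly elliptic operators. By Proposition \ref{boxj}, the symbol $L_j(\xi)$ of $\Box_j$ is a smooth homogeneous automorphism of $\mathbb C^{r_j}$ of degree $4$, uniformly elliptic, so $G_j(\xi) = L_j(\xi)^{-1}$ is smooth on $\mathbb R^{kn}\setminus\{\mathbf 0\}$ and homogeneous of degree $-4$. Its inverse Fourier transform yields a tempered distribution $G_j(x)$, smooth off the origin and homogeneous of degree $4 - kn$, which serves as the convolution kernel of $\mathbf G_j$. The inverse claim on $L^2$ is then immediate from Plancherel: on the Fourier side, $\Box_j$ acts as multiplication by $L_j(\xi)$ and $\mathbf G_j$ by $G_j(\xi)$, so their composition is the identity on a dense subspace of $L^2$ (e.g.\ the Schwartz class) and extends to the natural domain of $\Box_j$ in $L^2(\mathbb R^{kn},\mathcal V_j)$.

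For the $L^p \to W^{4,p}$ boundedness I would first handle the top-order derivatives. For a multi-index $\alpha$ with $|\alpha| = 4$, $\partial^\alpha G_j(x)$ is smooth off the origin and homogeneous of degree $-kn$, hence a Calderon--Zygmund kernel, and Proposition \ref{PV} gives that the associated principal-value singular integral is bounded on $L^p$ for $1 < p < \infty$; together with the identity (\ref{3.2}) this yields $\|\partial^\alpha(\mathbf G_j f)\|_{L^p} \leq C_p \|f\|_{L^p}$. Lower-order derivatives ($|\alpha| < 4$) are then controlled by combining these top-order bounds with the constant-coefficient elliptic estimate $\|u\|_{W^{4,p}} \lesssim \|\Box_j u\|_{L^p} + \|u\|_{L^p}$, which itself follows from Mihlin's multiplier theorem applied to the bounded, homogeneous-of-degree-zero symbols $\xi^\alpha G_j(\xi)|\xi|^{4-|\alpha|}$, together with $L^p$-boundedness of $\mathbf G_j$ on the relevant subspace, exactly as in the $k$-Cauchy--Fueter treatment of \cite{wang2008,Wa10}.

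For the pointwise regularity, observe that for any multi-index $\alpha$ with $|\alpha| \leq 3$ the derivative $\partial^\alpha G_j(x)$ is homogeneous of degree $4 - kn - |\alpha| \geq 1 - kn$ and smooth off the origin, hence locally integrable on $\mathbb R^{kn}$. For $f \in C_c^l$, the convolution $(\partial^\alpha G_j) * f$ is therefore a well-defined continuous function, and moving derivatives onto $f$ gives
\begin{equation*}
\partial^\beta\bigl[(\partial^\alpha G_j) * f\bigr] = (\partial^\alpha G_j) * (\partial^\beta f),
\end{equation*}
which is continuous for every $|\beta| \leq l$; hence $\partial^\alpha(\mathbf G_j f) \in C^l$ for every $|\alpha| \leq 3$, so $\mathbf G_j f \in C^{l+3}$. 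The main obstacle I foresee is the careful handling of the low-frequency singularity of $G_j(\xi)$ at the origin when realising it as a genuine tempered distribution: a polynomial subtraction may be needed when $kn$ is small relative to $4$, and the resulting kernel is only determined modulo polynomials of low degree. This ambiguity has to be absorbed in the $L^p$ statement by choosing a canonical representative; the identities $\Box_j \mathbf G_j = \mathbf G_j \Box_j = \mathrm{Id}$ are unaffected, and the whole scheme parallels the corresponding construction in \cite{wang2008,Wa10}.
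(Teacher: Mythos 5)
Your proposal is correct and follows essentially the same route as the paper: realize $\mathbf G_j$ as convolution with the homogeneous kernel $G_j = \mathcal F^{-1}\bigl(L_j(\xi)^{-1}\bigr)$, get the $L^2$ inverse from Plancherel, the $L^p\to W^{4,p}$ bound from Proposition \ref{PV} (Calderon--Zygmund), and the $C^{l+3}$ regularity by putting up to three derivatives on the (still locally integrable) kernel and the remaining $l$ on $f$. Your worry about a low-frequency polynomial ambiguity is harmless in the cases at hand, since $kn>4$ whenever the complex is nontrivial, so $G_j$ is a genuine locally integrable homogeneous kernel of negative degree $4-kn$.
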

\begin{proof}
For   $f\in \Gamma_c \left(\mathbb R^{kn},\mathcal V_j\right),$  \begin{align}\label{gG}
\mathbf G_jf=\int_{\mathbb R^{kn}}G_j(\mathbf x-\mathbf y)f(\mathbf y)\hbox{d}V(\mathbf y),
\end{align}for $j=0,1,2.$   Then for $f\in \Gamma_c \left(\mathbb R^{kn},\mathcal V_j\right),$ \begin{align}\label{Gj}
\mathbf G_j\Box_jf=\Box_j\mathbf G_jf=f,
\end{align}holds as  $L^2 $ functions,  since  their Fourier transformations are the same as $L^2$ functions. Since $L_j( \xi)$ are symmetric matrices by Proposition \ref{boxj}, we see that $G_j( \xi)$ are also symmetric. It follows from Plancherel theorem that   $\mathbf G_j, j = 0, 1, 2,$ is formally self-adjoint in the following sense:
\begin{align*}
\left(\mathbf G_j\phi,\psi\right)_{L^2\left(\mathbb R^{kn},\mathcal V_j\right)}=\left(\phi,\mathbf G_j\psi\right)_{L^2\left(\mathbb R^{kn},\mathcal V_j\right)},
\end{align*}
for any $\phi,\psi\in \Gamma_c \left(\mathbb R^{kn},\mathcal V_j\right).$

On the other hand,  $G_j$ satisfies the following decay estimates
\begin{align}\label{partial}
\left|\partial_{{A_1j_1}} \dots\partial_{{A_mj_m}} G_j(\mathbf x)\right|\leq \frac{C_{A_1j_1\dots A_mj_m}}{|\mathbf x|^{kn-4+m}},
\end{align}
for some constant $C_{A_1j_1\dots A_mj_m}>0$ depending on  $A_1,\dots,A_m=0,\dots,k-1,$ $j_1,\dots,$ $j_m=1,\dots,n.$
By   Proposition \ref{PV}, the convolution with a homogeneous function of degree $-kn+l$ can be extended to a bounded linear operator from $L^p\left(\mathbb R^{kn}\right)$ to $W^{l,p}\left(\mathbb R^{kn}\right).$  $\mathbf G_j$ is bounded from $L^2\left(\mathbb R^{kn},\mathcal V_j\right)$ to $W^{4,2}\left(\mathbb R^{kn},\mathcal V_j\right)$. So $\mathbf G_j$ can be extended to a bounded operator from $W^{-4,2}\left(\mathbb R^{kn},\mathcal V_j\right)$ to $L^2\left(\mathbb R^{kn},\mathcal V_j\right)$ by the duality argument. In particular, it is bounded from $W^{-2,2}\left(\mathbb R^{kn},\mathcal V_j\right)$ to $L^2\left(\mathbb R^{kn},\mathcal V_j\right).$  Thus (\ref{Gj}) holds as bounded linear operator from $L^2$ to $L^2,$ i.e.  $\mathbf G_j$ is the inverse operator of  Laplacian $\Box_j$ in $L^2.$

When $f\in C_c^{l}\left(\mathbb R^{kn},\mathcal V_j\right),$    $\mathbf G_jf\in C^{l+3}\left(\mathbb R^{kn},\mathcal V_j\right)$  by differentiation (\ref{gG}). The proposition is proved.
\end{proof}
\subsection{Solutions of non-homogeneous several Dirac equations}
To show the Hartogs-Bochner extension, we need to solve non-homogeneous several Dirac equations
\begin{align}\label{duf}
\mathcal D_0u=f,
\end{align}
 under the compatibility  condition
\begin{align}\label{comp}
\mathcal D_{1}f=0.
\end{align}

\begin{thm}\label{t31}
Suppose that $f\in L^2\left(\mathbb R^{kn},\mathcal V_1\right)$ satisfies the compatibility condition {\rm(\ref{comp})} in sense of distributions. Then there exists   $u\in W^{1,2}\left(\mathbb R^{kn},\mathcal V_0\right)$ satisfying the non-homogeneous   equation {\rm(\ref{duf})}. Furthermore, if $f\in C_c\left(\mathbb R^{kn},\mathcal V_1\right)$ with $\mathcal D_1f=0$ in the sense of distributions,  then there exists   $u\in C_c\left(\mathbb R^{kn},\mathcal V_0\right)\cap W^{1,2}\left(\mathbb R^{kn},\mathcal V_0\right)$ satisfying {\rm(\ref{duf})} and vansihing on the unbounded connected component of $\mathbb R^{kn}\setminus {\rm supp}f.$
\end{thm}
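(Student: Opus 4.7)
The plan is to invert the uniformly elliptic Hodge-Laplacian $\Box_1=(\mathcal{D}_0\mathcal{D}_0^*)^2+\mathcal{D}_1^*\mathcal{D}_1$ from Proposition~\ref{boxj} via the operator $\mathbf{G}_1$ of Proposition~\ref{GG}, and then to exploit the compatibility $\mathcal D_1 f=0$ to extract a first-order primitive. Concretely, I set
\[ v:=\mathbf{G}_1 f\in W^{4,2}(\mathbb R^{kn},\mathcal V_1),\qquad u:=\mathcal{D}_0^*\mathcal{D}_0\mathcal{D}_0^* v. \]
Since $\mathcal D_0$ and $\mathcal D_0^*$ are both first order by (\ref{D0}) and (\ref{D0star}), $u\in W^{1,2}(\mathbb R^{kn},\mathcal V_0)$, and a direct computation gives $\mathcal D_0 u=(\mathcal D_0\mathcal D_0^*)^2 v$. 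Hence the equation $\mathcal D_0 u=f$ will follow once I establish $\mathcal D_1^*\mathcal D_1 v=0$, because $\Box_1 v=f$ will then force $(\mathcal D_0\mathcal D_0^*)^2 v=f$.

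To prove $\mathcal{D}_1^*\mathcal{D}_1 v=0$ I apply $\mathcal{D}_1$ to the identity $(\mathcal{D}_0\mathcal{D}_0^*)^2 v+\mathcal{D}_1^*\mathcal{D}_1 v=f$. The first term drops out because $\mathcal{D}_1\mathcal{D}_0=0$ by Theorem~\ref{pell}, while the right-hand side vanishes by the compatibility hypothesis, producing
\[ \mathcal{D}_1\mathcal{D}_1^*\mathcal{D}_1 v=0 \]
in the sense of distributions. Since $v\in W^{4,2}$ gives $\mathcal{D}_1 v\in W^{2,2}$, I can approximate $\mathcal{D}_1 v$ by $\phi_n\in C_c^\infty$ in $W^{2,2}$; testing $\mathcal{D}_1\mathcal{D}_1^*\mathcal{D}_1 v$ against $\phi_n$, integrating by parts, and passing to the limit yields $\|\mathcal{D}_1^*\mathcal{D}_1 v\|_{L^2}^2=0$. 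This completes the verification that $\mathcal{D}_0 u=f$ for $f\in L^2$.

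For the compactly supported refinement, assume $f\in C_c$. Then $v\in C^3$ by Proposition~\ref{GG}, so $u\in C^0$. Off $\mathrm{supp}(f)$ one has $\mathcal{D}_0 u=0$, so $u$ is monogenic and therefore real-analytic by Corollary~\ref{p34}. I expect the hardest step to be showing $u\equiv 0$ on the unbounded connected component $U$ of $\mathbb R^{kn}\setminus\mathrm{supp}(f)$. The plan is to write $u=K\ast f$, where $K$ comes from three derivatives of the homogeneous kernel $G_1$ of degree $4-kn$ and is itself homogeneous of degree $1-kn$; this yields the pointwise decay $u(\mathbf x)=O(|\mathbf x|^{1-kn})$ at infinity. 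Combining this decay with the real-analyticity of $u$ on $U$ and a Liouville/unique-continuation argument in the spirit of the quaternionic $k$-Cauchy-Fueter case treated in \cite{wang2008,wang29} forces $u\equiv 0$ on $U$, yielding the required compactly supported solution with the stated vanishing property.
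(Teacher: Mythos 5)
Your construction of $u=\mathcal D_0^*\mathcal D_0\mathcal D_0^*\mathbf G_1f$ is the same as the paper's, but your route to the key identity $(\mathcal D_0\mathcal D_0^*)^2\mathbf G_1f=f$ is genuinely different and simpler. The paper proves the commutation relation $\mathcal D_1\mathbf G_1=\mathbf G_2\mathcal D_1$ by showing $\mathcal D_1\Box_1=\Box_2\mathcal D_1$ and invoking the invertibility of the second Hodge Laplacian $\Box_2$; this forces it to carry the operators $\mathcal D_2'$, $\mathcal D_2''$ and the exactness of the symbol sequence at $\mathcal V_2$ into the proof. You instead deduce $\mathcal D_1\mathcal D_1^*\mathcal D_1 v=0$ directly from $\mathcal D_1\mathcal D_0=0$ and the hypothesis $\mathcal D_1f=0$, and then kill $\mathcal D_1^*\mathcal D_1 v$ by pairing against $\mathcal D_1 v$ through a $C_c^\infty$ approximation in $W^{2,2}$. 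This is correct: since $\mathcal D_1^*\mathcal D_1v\in L^2$ and $\mathcal D_1^*\phi_n\to\mathcal D_1^*\mathcal D_1v$ in $L^2$, the limit of $0=\left\langle\mathcal D_1^*\mathcal D_1v,\mathcal D_1^*\phi_n\right\rangle$ gives $\|\mathcal D_1^*\mathcal D_1v\|_{L^2}^2=0$. What your argument buys is a reduction of the machinery: for this theorem you only need the complex through $\mathcal D_1$ and exactness of the symbol sequence at $\mathcal V_1$ (which is all that Proposition \ref{boxj} uses for $L_1$), whereas the paper's proof of this step is what makes $\mathcal D_2'$ and $\mathcal D_2''$ logically necessary. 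Your version also applies directly to $f\in L^2$ with $\mathcal D_1f=0$ distributionally, without the paper's detour through $\Gamma_c$ and a density/Fourier argument. (You obtain only $\mathcal D_1^*\mathcal D_1\mathbf G_1f=0$ rather than the paper's stronger $\mathcal D_1\mathbf G_1f=0$, but that is all that is needed.)

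The one place your proposal is incomplete is the vanishing of $u$ on the unbounded component $U$ of $\mathbb R^{kn}\setminus\mathrm{supp}f$. Decay $u(\mathbf x)=O(|\mathbf x|^{1-kn})$ together with real-analyticity and monogenicity \emph{on $U$} does not by itself yield $u\equiv0$: no Liouville theorem applies, because $u$ is not monogenic on all of $\mathbb R^{kn}$, and "unique continuation" needs an open set where $u$ is already known to vanish. The paper supplies that open set by a slicing argument: choosing $M$ so that the projection of $\mathrm{supp}f$ to $\mathbb R^n_{\mathbf x_0}$ lies in the ball of radius $M$, every slice $\{\mathbf x_0\}\times\mathbb R^{(k-1)n}$ with $|\mathbf x_0|>M$ misses $\mathrm{supp}f$ entirely, so $u(\mathbf x_0,\cdot)$ is an entire monogenic (hence harmonic, by Corollary \ref{p34}) function of $\mathbf x'$ on all of $\mathbb R^{(k-1)n}$ that tends to zero at infinity, and a Liouville-type theorem forces $u(\mathbf x_0,\cdot)\equiv0$ for every such $\mathbf x_0$. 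Only then does the identity theorem for real-analytic functions propagate the vanishing from this open subset to all of $U$. You should make this slicing step explicit; without it the final claim does not follow from the ingredients you list.
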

 \begin{proof}
  Recall that  $\mathbf G_1$ is the inverse operator of $\Box_1$ on $L^2\left(\mathbb R^{kn},\mathcal V_1\right)$ by Proposition \ref{GG}. We assume $f\in \Gamma_c \left(\mathbb R^{kn},\mathcal V_1\right)$ first. Then
\begin{align*}
u:=\mathcal D_0^*\mathcal D_0\mathcal D_0^*\mathbf G_1f\in C^\infty\left(\mathbb R^{2n},\mathcal V_0\right),
\end{align*}  by Proposition \ref{GG}. Note that
\begin{equation}\begin{aligned}\label{d1g1}
\mathcal D_1\Box_1f=&\mathcal D_1\left(\mathcal D_1^*\mathcal D_1+\left(\mathcal D_0\mathcal D_0^*\right)^2\right)f=\mathcal D_1\mathcal D_1^*\mathcal D_1f\\=&\left(\mathcal D_1\mathcal D_1^*+\left({\mathcal D'_2}^*{\mathcal D'_2}\right)^2+{\mathcal{D}_2''}^*{\mathcal{D}_2''}\right)\mathcal D_1f=\Box_2\mathcal D_1f,
\end{aligned}\end{equation}
   by $\mathcal D_1\mathcal D_0=0,{\mathcal D'_2}\mathcal D_1=0,{\mathcal D''_2}\mathcal D_1=0.$
Now  for $f\in \Gamma_c \left(\mathbb R^{kn},\mathcal V_1\right),$ we have
\begin{equation}\begin{aligned}\label{b2}
\Box_2 \left(\mathbf G_2\mathcal D_1f-\mathcal D_1\mathbf G_1f\right)=\mathcal D_1f-\mathcal D_1\Box_1\mathbf G_1f=0,
\end{aligned}\end{equation}
by using (\ref{d1g1}). For $f\in \Gamma_c  \left(\mathbb R^{kn},\mathcal V_1\right),$ it is direct to see that    $\left(\mathbf G_2\mathcal D_1f-\mathcal D_1\mathbf G_1f\right)\widehat{} \in L^2$, and $L_2(  \xi)\left(\mathbf G_2\mathcal D_1f-\mathcal D_1\mathbf G_1f\right)\widehat{} (  \xi)=0$ for $  \xi\in\mathbb R^{kn} $ by (\ref{b2}). Thus $\left(\mathbf G_2\mathcal D_1f-\mathcal D_1\mathbf G_1f\right)\widehat{} =0$ in $L^2,$ and so  by Plancherel theorem   $ \mathbf G_2\mathcal D_1f=\mathcal D_1\mathbf G_1f $ in $L^2.$ Moreover, $ \mathbf G_2\mathcal D_1f=\mathcal D_1\mathbf G_1f$ pointwisely, since they are both smooth  by Proposition \ref{GG}.  Then \begin{align}\label{GD}\mathbf G_2\mathcal D_1=\mathcal D_1\mathbf G_1,\end{align}as operators      from $\Gamma_c \left(\mathbb R^{kn},\mathcal V_1\right)$ to $W^{2,2}\left(\mathbb R^{kn},\mathcal V_2\right)$ by Proposition \ref{GG}.
 Thus \begin{align}\label{DGf}\mathcal D_1\mathbf G_1f=\mathbf G_2\mathcal D_1f=0,\end{align} and so \begin{align}\label{DDDDG}
\mathcal D_0u=\mathcal D_0\mathcal D_0^*\mathcal D_0\mathcal D_0^*\mathbf G_1f=\left(\left(\mathcal D_0\mathcal D_0^*\right)^2+\mathcal D_1^*\mathcal D_1\right)\mathbf G_1f=f,
\end{align}
i.e. $u=\mathcal D_0^*\mathcal D_0\mathcal D_0^*\mathbf G_1f$ satisfies (\ref{duf}) for $f\in \Gamma_c \left(\mathbb R^{2n},\mathcal V_1\right).$ Since $\mathbf G_j$  is bounded from $W^{-2,2} (\mathbb R^{kn},$ $\mathcal V_j )$ to $L^2\left(\mathbb R^{kn},\mathcal V_j\right) $ by the proof of Proposition \ref{GG},   the identity (\ref{GD}) holds as   bounded linear operators from  $L^2\left(\mathbb R^{kn},\mathcal V_1\right)$ to $L^2\left(\mathbb R^{kn},\mathcal V_2\right)$ and so the identity (\ref{DGf}) holds for $f\in L^2\left(\mathbb R^{kn},\mathcal V_1\right).$ Thus (\ref{DDDDG}) holds as $L^2$ functions for  $f\in L^2\left(\mathbb R^{kn},\mathcal V_1\right)$ satisfying $\mathcal D_1f = 0$ in the sense of distributions. Thus    $u =\mathcal D_0^*\mathcal D_0\mathcal D_0^*\mathbf G_1f$ satisfies the  equation (\ref{duf}).

Suppose that $f$ is supported in a bounded $\Omega\Subset\mathbb R^{2n}.$   By $\mathcal D_0u= f$, we  see that $u$ is monogenic on $\mathbb R^{kn}\setminus \Omega.$ Since the integral kernel $K(\mathbf x)$ of $\mathcal D_0^*\mathcal D_0\mathcal D_0^*\mathbf G_1$ decays as $|\mathbf x|^{-kn+1}$ for large $|\mathbf x|$ by decay estimate (\ref{partial}) and $f$ is compactly supported, we see that
\begin{align}\label{k}
\left|u(\mathbf x)\right|=\left|\int_{\mathbb R^{kn}}K\left(\mathbf x-{\mathbf y}\right)f\left({\mathbf y}\right)\hbox{d}V\left({\mathbf y}\right)\right|\leq\frac{C}{\left(1+|\mathbf x|\right)^{kn-1}},
\end{align}
for some constant $C>0.$
So  $\lim_{|\mathbf x|\rightarrow\infty}u(\mathbf x)=0.$ Since
$\Omega$ is bounded, its projection to $\mathbb R_{\mathbf x_0}^{ n} $ is also bounded and so is contained in a ball centered at origin with   radius, say $M$.
For $\mathbf x=\left(\mathbf x_0,\mathbf x'\right)$ with $\mathbf x'=\left(\mathbf x_1,\dots,\mathbf x_{k-1}\right),$
 if $|\mathbf x_0|>M$, then
 $$\left(\left\{\mathbf x_0\right\}\times\mathbb R_{\mathbf x'}^{(k-1)n}\right)\cap\overline \Omega=\emptyset.$$
  Thus, $u\left(\mathbf x_0,\mathbf x'\right)$ for $\left|\mathbf x_0\right|>M$ is   monogenic  in variable $\mathbf x'\in \mathbb{R}^{(k-1)n}$, and   vanishes at infinity. Since
\begin{align}\label{har}
\Delta'u\left(\mathbf x_0,\mathbf x'\right)=0,
\end{align}by Corollary \ref{p34}, where $\Delta'= - \sum_{A=1}^{k-1}\sum_{j=1}^n\partial_{Aj}^2,$
each component of $u\left(\mathbf x_0, \cdot\right)$ is a biharmonic function on $\mathbb R_{\mathbf x'}^n$ vanishing at infinity,  and so it is bounded. Hence $u\left(\mathbf x_0, \cdot\right)\equiv0$ for $|\mathbf x_0|>M$   by Liouville-type theorem. So $u\equiv0$ on the unbounded connected component of $\mathbb R^{kn}\setminus \Omega$ by the identity theorem for real analytic functions, since  $u$ is real analytic on $\mathbb R^{kn}\setminus \Omega$ by Corollary \ref{p34}. The continuity of $u$ follows from the integral representation  formula as in (\ref{k}). The theorem is proved.
\end{proof}

 Now we can prove the  Hartogs-Bochner extension theorem  for tangentially  monogenic functions.

{\it Proof of Theorem \ref{HBE}.}
By Proposition \ref{phat}, we can extend  $f$ to a smooth function $\tilde f$ on $\overline \Omega$ such that $\left.\tilde f\right|_{\partial \Omega}=f$ and $\mathcal D_0\tilde f$ is flat on $\partial \Omega.$ Then we   extend $\mathcal D_0\tilde f$ by $0$ outside of $\overline \Omega$ to get a $\mathcal D_1$-closed element $F\in C_c^2\left(\mathbb R^{kn},\mathcal V_1\right)$ supported in $\overline \Omega.$ Then by Theorem \ref{t31}, there exists $H\in W^{1,2}\left(\mathbb R^{kn},\mathcal V_0\right)$ vanishing  on the connected open set $\mathbb R^{kn}\setminus\overline \Omega$   such that $F=\mathcal D_0H.$ Then $ f-H$ monogenic on $\Omega$ and  gives us the required extension.
\qed
\appendix\section{The construction of Weyl modules}\label{app}

Young diagrams can be used to describe projection operators for the regular representation, which will then give the irreducible representations of ${\rm GL}(k).$ More generally,   a \emph{tableau} is  Young diagram with   numbering
of the boxes by the integers $1, \dots ,k.$ Given a tableau  define two subgroups of the symmetric group
\begin{equation*}
P=P_\lambda=\left\{g\in{\rm GL}(k): g\ {\rm preserves\ each\ row}\right\}
\end{equation*}
and
\begin{equation*}
Q=Q_\lambda=\left\{g\in{\rm GL}(k): g\ {\rm preserves\ each\ column}\right\}.
\end{equation*}
In the group algebra $\mathbb C{\rm GL}(k),$ we introduce two elements corresponding to these subgroups and set
\begin{equation*}
a_\lambda:=\sum_{g\in P}e_g,\quad   b_\lambda:=\sum_{g\in Q}{\rm sgn}(g)e_g,,
\end{equation*} where $\left\{e_g\right\}$ is the basis of $\mathbb C{\rm GL}(k).$
The {\it Young symmetrizer} \begin{align*}
\mathfrak c_\lambda=a_\lambda\cdot b_\lambda
\end{align*}
 in  $\mathbb C{\rm GL}(k)$   defines an action  $ \otimes^k\mathbb C^k\rightarrow\otimes^k\mathbb C^k $ by
 \begin{equation}\begin{aligned}\label{mfc}
 \left(\omega^{i_k\dots v_{i_1}} \right) \mathfrak c_\lambda=\sum_{\sigma=\sigma_1\sigma_2\in PQ}{\rm sgn}\left(\sigma_2\right)\cdot \omega^{i_{\sigma(k)} \dots  i_{\sigma(1)}}.
\end{aligned}\end{equation}
The image of $\mathfrak c_\lambda$ on ${\otimes^k}\mathbb C^k$ is called the  \emph{Weyl module}, which is isomorphic to an irreducible
 representation of ${\rm GL}(k)$     labelled by $\lambda$ (cf. \cite[Section 6.1]{FH}).
\begin{prop}{\rm (cf. \cite[Theorem 4.3, Lemma 4.26]{FH})} $\frac{\mathfrak c_\lambda}{n_{\lambda}}$ is idempotent, i.e. $\mathfrak c_\lambda \mathfrak c_\lambda=n_\lambda \mathfrak c_\lambda,$ where $n_\lambda=\frac{k!}{{\rm dim} V_\lambda}$ with $V_\lambda$ be   an irreducible representation ${\rm GL}(k).$
\end{prop}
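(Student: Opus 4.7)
The plan is to follow the classical argument due to Young, reducing the claim to a single combinatorial sign lemma about how double cosets in $S_k$ interact with the symmetrizer and antisymmetrizer.

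First I would prove the key lemma: for every $\sigma\in S_k$, the element $a_\lambda\, e_\sigma\, b_\lambda$ is a scalar multiple of $\mathfrak c_\lambda$. The two basic identities are $a_\lambda p=p\, a_\lambda=a_\lambda$ for $p\in P$ and $b_\lambda q=q\, b_\lambda=\mathrm{sgn}(q)\, b_\lambda$ for $q\in Q$. If $\sigma=pq$ with $p\in P$, $q\in Q$, these give $a_\lambda\sigma b_\lambda=\mathrm{sgn}(q)\,\mathfrak c_\lambda$. If $\sigma\notin PQ$, I would show by a pigeonhole-type argument on the tableaux $T$ and $\sigma\cdot T$ that there must exist two entries lying in the same row of $T$ that are sent by $\sigma$ to the same column of $T$; writing $t=(ij)$ for the corresponding transposition gives $t\in P$ and $\sigma^{-1}t\sigma\in Q$, both of sign $-1$. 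Then
\begin{equation*}
a_\lambda \sigma b_\lambda = (a_\lambda t)\sigma b_\lambda = a_\lambda \sigma (\sigma^{-1}t\sigma)b_\lambda = -a_\lambda\sigma b_\lambda,
\end{equation*}
forcing $a_\lambda\sigma b_\lambda=0$.

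Second, I would apply the lemma to $\mathfrak c_\lambda^{\,2}=a_\lambda(b_\lambda a_\lambda)b_\lambda$. Writing $b_\lambda a_\lambda=\sum_{p\in P,\,q\in Q}\mathrm{sgn}(q)\, e_{qp}$ and invoking the lemma termwise shows that each contribution is a scalar multiple of $\mathfrak c_\lambda$, so $\mathfrak c_\lambda^{\,2}=n_\lambda\,\mathfrak c_\lambda$ for some $n_\lambda\in\mathbb C$. In particular $\mathfrak c_\lambda/n_\lambda$ is idempotent.

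Third, I would compute $n_\lambda$ via a trace argument. Consider right multiplication $R_{\mathfrak c_\lambda}\colon \mathbb C S_k\to\mathbb C S_k$. For any $x=\sum_\sigma x_\sigma e_\sigma$ the operator $R_x$ has trace $k!\cdot x_e$, since $R_x(e_g)=\sum_\sigma x_\sigma e_{g\sigma}$ contributes $x_e$ on the diagonal in the basis $\{e_g\}$. The only pair $(p,q)\in P\times Q$ giving the identity permutation is $(e,e)$, so the coefficient of $e$ in $\mathfrak c_\lambda$ is $1$ and hence $\mathrm{tr}\, R_{\mathfrak c_\lambda}=k!$. On the other hand, once $e_\lambda:=\mathfrak c_\lambda/n_\lambda$ is shown to be a primitive idempotent (cutting out the irreducible block $V_\lambda$ in the Artin--Wedderburn decomposition $\mathbb C S_k\cong\bigoplus_\mu\mathrm{End}(V_\mu)$), $R_{e_\lambda}$ is a rank-one projection in that block, so $\mathrm{tr}\,R_{e_\lambda}=\dim V_\lambda$. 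Combining, $k!=n_\lambda\dim V_\lambda$, which gives $n_\lambda=k!/\dim V_\lambda$.

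The main obstacle is the combinatorial half of Step~1 — the pigeonhole argument producing the transposition $t$ when $\sigma\notin PQ$ — which is the classical content of the theorem and must be handled carefully by tracking row/column incidences in the numbered tableau associated to $\lambda$. The trace computation in Step~3 is routine once one accepts that $\mathfrak c_\lambda/n_\lambda$ lives inside a single Wedderburn block, a fact that itself follows by combining Step~1 with the observation that $a_\lambda\, \mathbb C S_k\, b_\lambda=\mathbb C\cdot\mathfrak c_\lambda$ is one-dimensional.
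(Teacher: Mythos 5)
Your argument is correct, and it is essentially the proof in the reference the paper cites for this proposition (Fulton--Harris): the paper itself gives no proof, and your two ingredients --- the sign lemma showing $a_\lambda e_\sigma b_\lambda\in\mathbb C\,\mathfrak c_\lambda$ for every $\sigma$, and the trace of right multiplication $\mathrm{tr}\,R_{\mathfrak c_\lambda}=k!\cdot(\text{coefficient of }e)=k!$ against $\mathrm{tr}\,R_{\mathfrak c_\lambda/n_\lambda}=\dim V_\lambda$ --- are exactly the standard route. One clarification worth recording: the constant comes out as $n_\lambda=k!/\dim V_\lambda$ with $V_\lambda$ the irreducible $S_k$-module $\mathbb C S_k\,\mathfrak c_\lambda$ (the Specht module), not the $\mathrm{GL}(k)$ Weyl module that the surrounding appendix constructs, so the proposition's phrasing should be read accordingly.
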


For Young diagram
\begin{equation*}
\begin{ytableau}
       1 & 2  \\
  3
\end{ytableau},
\end{equation*}
denote   \begin{align}\label{C21}
\mathfrak c_{21}:=\frac13\{1+(12)\}\{1-(13)\} =\frac13\left[1+(12)-(13)-(132)\right].
\end{align}
Here we add the factor $\frac13$ so that it becomes a projection operator, compared to the original definition in (\ref{mfc}). For $\omega^{321}  \in\otimes^3\mathbb C^k,$ we have
\begin{align}
\left(\omega^{321} \right)\mathfrak c_{21}=\omega^{321} +\omega^{312}-\omega^{123}  -\omega^{132},
\end{align}by (\ref{C21}). Here the element $\mathfrak c_{21}$ of the group algebra $\mathbb C{\rm GL}(k)$ acts on $\omega^{321}$ from right. If   label the Young diagram by letter $A,B,C$ as  $\begin{ytableau}C & B  \\  A\end{ytableau},$ for an element $h_{ABC}\omega^{ABC}\in\otimes^3\mathbb C^k,$ we have \begin{equation*}\begin{aligned}
 \left(h_{ABC}\omega^{ABC} \right)\mathfrak c_{21}=&\frac13 h_{ABC} \left(\omega^{ABC} +\omega^{ACB} -\omega^{CBA} -\omega^{CAB} \right) \\=&\frac23\left(h_{[A\underline{B}C]}+h_{[A\underline{C}B]} \right) \omega^{ABC},
\end{aligned}\end{equation*} by relabeling indices.

 For Young diagram
$
\begin{ytableau}
       1 & 2  \\
  3&4
\end{ytableau},
$
denote  \begin{align}\label{C22}
\mathfrak c_{22}:=\frac{1}{12}\{1+(12)\}\{1+(34)\}\{1-(13)\}\{1-(24)\}.
\end{align}
For $\omega^{4321}\in \otimes^4 \mathbb C^k,$ we have
 \begin{equation*}\begin{aligned}
\left(\omega^{4321} \right)\mathfrak c_{22}=&\frac{1}{12}\left(\omega^{4321} +\omega^{4312} +\omega^{3421} +\omega^{3412}  -\omega^{4123} -\omega^{4132} -\omega^{1423} -\omega^{1432}\right. \\&\left.\qquad-
\omega^{2341} -\omega^{2314} -\omega^{3241} -\omega^{3214}  +
\omega^{2143} +\omega^{2134} +\omega^{1243} +\omega^{1234} \right),
\end{aligned}\end{equation*} by   (\ref{C22}) of $\mathfrak c_{22}.$ If   label the Young diagram by letter $A,B,C,D$ as $\begin{ytableau}      C & B  \\  A&D\end{ytableau}$,  for an  element $h_{DABC}\omega^{DABC}\in\otimes^4\mathbb C^k,$ we have  {\small\begin{equation*}\begin{aligned}
& \left(h_{DABC}\omega^{DABC} \right)\mathfrak c_{22}\\=&\frac{1}{12}h_{DABC}
\left(\omega^{DABC}+\omega^{DACB}+\omega^{ADBC}+\omega^{ADCB}  - \omega^{DCBA}-\omega^{DCAB}-\omega^{CDBA}-\omega^{CDAB}\right. \\&\qquad\qquad\quad-\left.\omega^{BADC}-\omega^{BACD} -\omega^{ABDC}-\omega^{ABCD} +\omega^{BCDA}+\omega^{BCAD}+\omega^{CBDA}+\omega^{CBAD}\right)
\\=&\frac16\sum_{(A,D),(B,C)}\left(h_{D[A\underline{B}C]}+ h_{B[C\underline{A}D]}\right).
\end{aligned}\end{equation*}}

 For Young diagram
$
\begin{ytableau}
       1 & 2 & 4 \\
  3\\5
\end{ytableau},
$
denote  {\small\begin{align}\label{C311}
\mathfrak c_{311}:=\frac{1}{20}\{1+(12) +(14)+(24)+(124)+(142)\}\{1-(13)-(15)-(35)+(135)+(153)\}.
\end{align}}    For $\omega^{54321} \in\otimes^5\mathbb C^k,$ we have
 \begin{equation*}\begin{aligned}
\left(\omega^{54321}\right)\mathfrak c_{311}=&\frac{1}{20}\left(\omega^{54321} +\omega^{54312} +\omega^{51324} +\omega^{52341} +\omega^{51342} +\omega^{52314} \right.
\\&\qquad-\omega^{54123} -\omega^{54132} -\omega^{53124}   -\omega^{52143} -\omega^{53142} -\omega^{52134}
\\&\qquad-\omega^{14325} -\omega^{14352} -\omega^{15324}   -\omega^{12345} -\omega^{15342} -\omega^{12354}
\\&\qquad-\omega^{34521} -\omega^{34512} -\omega^{31524}  -\omega^{32541} -\omega^{31542} -\omega^{32514}
\\&\qquad+\omega^{14523} +\omega^{14532} +\omega^{13524}   +\omega^{12543} +\omega^{13542} +\omega^{12534}
\\&\qquad\left.+\omega^{34125} +\omega^{34152} +\omega^{35124}+\omega^{32145} +\omega^{35142} +\omega^{32154}
\right).
\end{aligned}\end{equation*}

If   label the Young diagram by letter $A,B,C,D,E$ as
$\begin{ytableau}  C & B & D \\ A\\E\end{ytableau},$ then for an element\\
 $h_{EDABC}$ $\omega^{EDABC}\in\otimes^5\mathbb C^k,$ we have     {\small \begin{equation*}\begin{aligned}
 &\left(h_{EDABC}\omega^{EDABC}\right)\mathfrak c_{311}\\
=&h_{EDABC}\left(\omega^{EDABC} +\omega^{EDACB} +\omega^{ECABD} +\omega^{EBADC} +\omega^{ECADB} +\omega^{EBACD}\right.
\\&\qquad\qquad-\omega^{EDCBA} -\omega^{EDCAB} -\omega^{EACBD}  -\omega^{EBCDA} -\omega^{EACDB} -\omega^{EBCAD}
\\&\qquad\qquad-\omega^{CDABE} -\omega^{CDAEB} -\omega^{CEABD} -\omega^{CBADE} -\omega^{CEADB} -\omega^{CBAED}
\\&\qquad\qquad-\omega^{ADEBC} -\omega^{ADECB} -\omega^{ACEBD}  -\omega^{ABEDC} -\omega^{ACEDB} -\omega^{ABECD}
\\&\qquad\qquad+\omega^{CDEBA} +\omega^{CDEAB} -\omega^{CAEBD} +\omega^{CBEDA} +\omega^{CAEDB} +\omega^{CBEAD}
\\&\qquad\qquad\left.+\omega^{ADCBE} +\omega^{ADCEB} +\omega^{AECBD} +\omega^{ABCDE} +\omega^{AECDB} +\omega^{ABCED}
\right)
\\=&\frac{3}{10}\sum_{(D,B,C)} h_{[E\underline{D}A\underline{B}C]}\omega^{EDABC}.
\end{aligned}\end{equation*}}

\end{document}